\documentclass[12pt, reqno]{amsart}
\usepackage{amsmath, amstext, amsbsy, amssymb}

\setlength{\textheight}{8.6in} \setlength{\textwidth}{35pc}
\setlength{\topmargin}{-0.1in} \setlength{\footskip}{0.2in}
\setlength{\oddsidemargin}{.573125pc}
\setlength{\evensidemargin}{\oddsidemargin}

\newtheorem{theorem}{Theorem}[section]
\newtheorem{lemma}[theorem]{Lemma}
\newtheorem{proposition}[theorem]{Proposition}
\newtheorem{corollary}[theorem]{Corollary}
\theoremstyle{definition}
\newtheorem{definition}[theorem]{Definition}

\theoremstyle{remark}
\newtheorem{remark}[theorem]{Remark}
\theoremstyle{conjecture}

\theoremstyle{problem}

\numberwithin{equation}{section}

\newcommand{\ch}{\mbox{ch} }
\newcommand{\C}{ \Bbb C }

\newcommand{\ep}{\epsilon}

\newcommand{\Pee}{ \mathbb P }

\newcommand{\W}{\widetilde}
\newcommand{\w}{\tilde}

\newcommand{\Z}{ \mathbb Z }

\newcommand\Hom{\operatorname{Hom}}

\newcommand\Pic{\operatorname{Pic}}

\newcommand\Supp{\operatorname{Supp}}

\newcommand\Ext{\operatorname{Ext}}

\newcommand\Hilb{\operatorname{Hilb}}
\newcommand\rank{\operatorname{rank}}
\newcommand\Quot{\operatorname{Quot}}

{\vskip-\lastskip\medskip
  \noindent
  {\em #1.}\enspace
  }%
{\qed\par\medskip
  }

\begin{document}

\title[Donaldson-Thomas invariants of certain Calabi-Yau $3$-folds]
{Donaldson-Thomas invariants of certain Calabi-Yau $3$-folds}

\author[Wei-Ping Li]{Wei-Ping Li$^1$}
\address{Department of Mathematics, HKUST, Clear Water Bay,
Kowloon, Hong Kong} \email{mawpli@ust.hk}
\thanks{${}^1$Partially supported by the grants   GRF601808}

\author[Zhenbo Qin]{Zhenbo Qin$^2$}
\address{Department of Mathematics, University of Missouri,
Columbia, MO 65211, USA} \email{qinz@missouri.edu}
\thanks{${}^2$Partially supported by an NSF grant}

\keywords{Donaldson-Thomas invariants, Behrend's $\nu$-functions.}
\subjclass{Primary 14D20, 14J60; Secondary: 14F05, 14J32}

\begin{abstract}
We compute the Donaldson-Thomas invariants for two types of Calabi-Yau 
$3$-folds. These invariants are associated to the moduli spaces of 
rank-$2$ Gieseker semistable sheaves. None of the sheaves are locally free, 
and their double duals are locally free stable sheaves investigated earlier
in \cite{DT, Tho, LQ2}. We show that these Gieseker moduli spaces are 
isomorphic to some Quot-schemes. We prove a formula for Behrend's 
$\nu$-functions when torus actions present with positive dimensional fixed point sets, and use it to obtain 
the generating series of the relevant Donaldson-Thomas invariants 
in terms of the McMahon function. Our results might shed some light on 
the wall-crossing phenomena of Donaldson-Thomas invariants.
\end{abstract}

\maketitle

\section{\bf Introduction}
\label{sect_intro}
The Donaldson-Thomas invariants of a Calabi-Yau $3$-fold $Y$ essentially 
count the number of stable sheaves on $Y$. 
It attracts intensive activities recently due to the conjectural 
relations with Gromov-Witten invariants proposed by
Maulik, Nekrasov, Okounkov and Pandharipande. The moduli space of 
stable sheaves in \cite{MNOP} consists of ideal sheaves
defining $1$-dimensional closed subschemes of $Y$ with 
some $0$-dimensional components and some embedded points.
The Donaldson-Thomas invariants associated to the $0$-dimensional closed 
subschemes of $Y$ have been determined in \cite{JLi, BF, LP}.
Curves on $Y$ may also be related to rank-$2$ vector bundles via 
the Serre construction. In fact, the first example of Donaldson-Thomas 
invariants in \cite{DT, Tho} counts certain stable rank-$2$ sheaves
on the Calabi-Yau $3$-fold $Y$ which is the smooth intersection of 
a quartic hypersurface and a quadric hypersurface in $\mathbb P^5$.
In \cite{LQ2}, we studied some moduli spaces of rank-$2$ stable sheaves
on a Calabi-Yau hypersurface of $\mathbb P^1\times\mathbb P^1\times \mathbb P^n$ 
for $n\ge 2$ and computed the corresponding Donaldson-Thomas invariants 
of the $3$-fold $Y$ when $n=2$. The idea in \cite{DT, Tho} and \cite{LQ2}
is to give a complete description of the moduli spaces. 
Let $L$ (respectively, ${\bf c}_0$) be the ample line bundle 
(respectively, total Chern class) considered in \cite{DT, Tho} or \cite{LQ2}. 
Then the moduli space $\overline{\mathfrak M}_L({\bf c}_0)$ 
of rank-$2$ Gieseker $L$-semistable sheaves with total Chern classes 
${\bf c}_0$ consists of two smooth points for the case in \cite{DT, Tho} 
and is a projective space for the case in \cite{LQ2}. 
It turns out that in both cases, all the sheaves 
in $\overline{\mathfrak M}_L({\bf c}_0)$ are stable and locally free. Here ${\bf c}_0$ is chosen 
as in Theorem A and Theorem B. For a general total Chern class ${\bf c}_0$, a
full description of the moduli space $\overline{\mathfrak M}_L({\bf c}_0)$ is yet to be done. 

In this paper, we compute the Donaldson-Thomas invariant, denoted by 
$\lambda(L, {\bf c}_m)$, associated to the Gieseker moduli space 
$\overline{\mathfrak M}_L({\bf c}_m)$ where 
$$
{\bf c}_m = {\bf c}_0 - m[y_0]
$$ 
and $y_0 \in Y$ is a fixed point. Again, an important ingredient is to 
understand $\overline{\mathfrak M}_L({\bf c}_m)$.
We show that whenever $\overline{\mathfrak M}_L({\bf c}_m)$ with $m \ne 0$
is not empty, none of the sheaves $E \in \overline{\mathfrak M}_L({\bf c}_m)$
are locally free but their double duals $E^{**}$ are locally free 
and contained in the moduli space $\overline{\mathfrak M}_L({\bf c}_0)$
considered in \cite{DT, Tho} and \cite{LQ2}. 

More precisely, for the pair $(L, {\bf c}_0)$ from \cite{DT, Tho},
let the two smooth points in $\overline{\mathfrak M}_L({\bf c}_0)$ 
be represented by the rank-$2$ stable bundles $E_{0, 1}$ and $E_{0, 2}$. 
If $\overline{\mathfrak M}_L({\bf c}_m) \ne \emptyset$, then $m$ is even
and nonnegative. When $m \ge 0$, the moduli space 
$\overline{\mathfrak M}_{L}({\bf c}_{2m})$ is isomorphic to the disjoint 
union of the Quot-schemes $\Quot^m_{E_{0, 1}}$ and $\Quot^m_{E_{0, 2}}$.

\medskip\noindent
{\bf Theorem A.}  
{\it Let $Q_0$ be a smooth quadric in $\mathbb P^5$, $H$ be a hyperplane in 
$\mathbb P^5$, $P$ be a plane on $Q_0$, and $Y$ be the Calabi-Yau $3$-fold 
which is the smooth intersection of a quartic hypersurface and $Q_0$ in 
$\mathbb P^5$. Let the total Chern class be
$${\bf c}_0=1+H|_Y+P|_Y.$$
Let $\chi(Y)$ be the Euler characteristic of $Y$, and $M(q) = 
\prod_{m=1}^{+\infty} {1 \over (1-q^m)^m}$ be the McMahon function. Then, 
\begin{eqnarray*}    
  \sum_{m \in \Z} \lambda(L, {\bf c}_m) \, q^m
= 2 \cdot M(q^2)^{2 \, \chi(Y)}.         
\end{eqnarray*}}

Next, let $Y$ be the Calabi-Yau $3$-fold considered in \cite{LQ2},
i.e., $Y$ is a smooth Calabi-Yau hypersurface of $\mathbb P^1\times
\mathbb P^1 \times \mathbb P^2$. Let $L_r^Y= L =  
\pi_2^*\mathcal O_{\mathbb P^1}(1) \otimes 
\pi_3^*\mathcal O_{\mathbb P^2}(r)|_Y$ be a $\mathbb Q$-line bundle on $Y$
where $\pi_i$ is the $i$-th projection of $\mathbb P^1\times
\mathbb P^1 \times \mathbb P^2$. Let 
$$
2(2-\ep_2)/(2+\ep_1) < r < 2(2-\ep_2)/\ep_1
$$
where $\epsilon_1, \epsilon_2 = 0, 1$ appear in the definition of 
the total Chern class ${\bf c}_{m}$ in (\ref{B.1}). In \cite{LQ2},
we proved that $\overline{\mathfrak M}_{L_r^Y}({\bf c}_0)$ is 
isomorphic to a projective space and consists of stable bundles.
Let $\mathcal E_0$ be a universal vector bundle over 
$\overline{\mathfrak M}_{L_r^Y}({\bf c}_{0})\times Y$, and let
$$
\Quot^m_{\mathcal E_0/} =
\Quot^m_{\mathcal E_0/\overline{\mathfrak M}_{L_r^Y}({\bf c}_0) 
\times Y/\overline{\mathfrak M}_{L_r^Y}({\bf c}_0)}
$$
be the relative Quot-scheme. If $\overline{\mathfrak M}_{L_r^Y}({\bf c}_m)
\ne \emptyset$, then $m$ is even and nonnegative. When $m \ge 0$, 
the moduli space $\overline{\mathfrak M}_{L}({\bf c}_{2m})$ is isomorphic to 
$\Quot^m_{\mathcal E_0/}$.
 
\medskip\noindent
{\bf Theorem B.}  
{\it Let $Y \subset \mathbb P^1\times \mathbb P^1\times \mathbb P^2$ be
a generic smooth Calabi-Yau hypersurface.
Let $\ep_1, \ep_2 = 0, 1$, and $k=(1+\ep_1)(4-\ep_2)(3-\ep_2)/2-1$.
Let $\pi: Y \to \mathbb P^1\times \mathbb P^2$ be the restriction 
to $Y$ of the projection of $\mathbb P^1\times \mathbb P^1 \times 
\mathbb P^2$ to the product of the last two factors.
Fix a point $y_0 \in Y$, and define in $A^*(Y)$ the class
\begin{eqnarray}   \label{B.1}
{\bf c}_m = -m[y_0] + \big ( 1+\pi^*(-1, 1) \big ) \cdot 
   \big ( 1+\pi^*(\ep_1+1, \ep_2-1) \big )
\end{eqnarray}
where $(a, b)$ denotes the divisor $a(\{p\} \times \Pee^2) 
+ b(\Pee^1 \times H)$ for a line $H$ in $\mathbb P^2$. 
\begin{enumerate}
\item[{\rm (i)}] If $0 < r < 2(2-\ep_2)/(2+\ep_1)$,
then $\lambda(L_r^Y, {\bf c}_{m}) = 0$ for all $m \in \Z$.

\item[{\rm (ii)}] If $2(2-\ep_2)/(2+\ep_1) < r < 2(2-\ep_2)/\ep_1$, then
\begin{eqnarray*}      
  \sum_{m \in \Z} \lambda(L_r^Y, {\bf c}_m) q^m
= (-1)^{k} \cdot (k+1) \cdot M(q^2)^{2 \, \chi(Y)}.
\end{eqnarray*}
\end{enumerate}}

In fact, if $0 < r < 2(2-\ep_2)/(2+\ep_1)$, then the moduli space 
$\overline{\mathfrak M}_{L}({\bf c}_m)$ is empty for all $m \in \Z$.
Therefore $r = 2(2-\ep_2)/(2+\ep_1)$ may be regarded as a wall, and
the open intervals $(0, 2(2-\ep_2)/(2+\ep_1))$ and 
$(2(2-\ep_2)/(2+\ep_1), 2(2-\ep_2)/\ep_1)$ may be regarded as two chambers. 
From this point of view, Theorem~B~(ii) provides 
a wall-crossing formula for the Donaldson-Thomas invariants.
Due to their connections with Hall algebras, 
wall-crossing formulas for Donaldson-Thomas invariants have been 
investigated intensively in the past few years (see \cite{Joy, JS, KL, KS, Tod1} 
and the references there). A concrete wall-crossing formula was obtained 
in \cite{KL} under certain conditions which are not satisfied 
in our present situation. Our results might shed some light on 
the general properties of these wall-crossing formulas.

In addition to understanding the moduli spaces of Gieseker semistable sheaves,
another essential ingredient in the proofs of Theorem~A and 
Theorem~B is the following result concerning Behrend's 
$\nu$-function when a torus action exists.

\medskip\noindent
{\bf Theorem C.}  
{\it Assume that $\mathbb T = \C^*$ acts on a complex scheme $X$
which admits a symmetric obstruction theory compatible 
with the $\mathbb T$-action. Let $P \in X^{\mathbb T}$. Then,
\begin{enumerate}
\item[(i)] $X^{\mathbb T}$ admits a symmetric obstruction theory;

\item[(ii)] $\displaystyle{\nu_X(P) = (-1)^{\dim T_PX \, - \,
\dim T_P(X^{\mathbb T})} \cdot \nu_{X^{\mathbb T}}(P)}$, where 
$T_PX$ denotes the Zariski tangent space of $X$ at $P$
and $\nu_X$ denotes Behrend's $\nu$-function for $X$.
\end{enumerate}}

When $P \in X^{\mathbb T}$ is an isolated $\mathbb T$-fixed point,
Theorem~C~(ii) has been proved in \cite{BF}. It also follows easily
when $X$ is locally the critical scheme of a regular function $f$ on 
a smooth scheme $M$, i.e., $X=Z(df)$ locally. This is due to the fact that, 
in this case, $\nu_X(P)$ can be computed via the Euler characteristic 
of the Milnor fiber obtained from $f$. In \cite{JS, BG}, it is shown 
that the moduli spaces of Gieseker stable sheaves on a Calabi-Yau 
$3$-fold are locally critical schemes. Therefore the results from 
\cite{JS, BG} are sufficient for the computation of Donaldson-Thomas 
invariants. However, in \cite{MPT}, an example where a scheme $X$ 
admitting a symmetric obstruction theory is not locally critical 
has been constructed. Hence Theorem C could be useful to a general 
$X$ with just a symmetric obstruction theory. 

By the results in \cite{Beh}, the Donaldson-Thomas invariant
$\lambda(L, {\bf c}_m)$ coincides with the weighted Euler characteristic
$\W \chi \big ( \overline{\mathfrak M}_L({\bf c}_m) \big )$ of 
the moduli space 
$\overline{\mathfrak M}_L({\bf c}_m)$ (see (\ref{weighted}) for the 
definition of $\W \chi ( \cdot )$). By standard techniques, the 
computation of $\W \chi \big ( \overline{\mathfrak M}_L({\bf c}_m) \big )$
reduces to the relevant punctual Quot-schemes. It turns out that 
these punctual Quot-schemes admit ${\mathbb T}$-actions. 
The ${\mathbb T}$-fixed loci are the unions of certain products of
the punctual Hilbert schemes of $0$-dimensional closed subschemes on $Y$.
A combination of Theorem~C and the results in \cite{JLi, BF, LP} 
regarding punctual Hilbert schemes yield Theorem~A and Theorem~B.

In \cite{MNOP}, the Donaldson-Thomas invariants are defined via 
the moduli space of ideal sheaves $I_Z$ where the dimensions of 
the closed subschemes $Z$ are equal to one. Most of the computational
results in the literature are concentrated on this type of invariants. 
Via the Serre construction, curves on $Y$ correspond to rank-$2$ vector 
bundles on $Y$. With this point of view, Theorem~A is comparable to 
the Donaldson-Thomas invariants corresponding to super-rigid curves 
considered in \cite{BB}. An irreducible super-rigid curve $C$ in $Y$ 
has normal bundle $\mathcal O_C(-1) \oplus \mathcal O_C(-1)$ and thus 
can not deform; in our case, $E_{0, 1}$ and $E_{0,2}$ have no 
deformation either, and thus are similar to super-rigid curves. 
Theorem~B is comparable to the Donaldson-Thomas invariants corresponding
to the moduli spaces of ideal sheaves $I_Z$ 
where the topological invariant $[Z]$ of $Z$ is fixed in $H_2(Y,\Bbb Z)$ 
but the curve component of $Z$ has a positive dimensional moduli.

We remark that recently Stoppa \cite{Sto} and Toda \cite{Tod2} worked on D0-D6 states counting
which  is similar to this paper. For example, sheaves in \cite{Sto}  are isomorphic to the trivial vector bundle 
of some rank outside a finite set of points. Thus the invariants counted there are higher rank generalizations of
degree zero Donaldson-Thomas invariants for ideal sheaves defined in \cite{MNOP}. While their work deal
with sheaves with vanishing  first and second Chern classes on arbitrary Calabi-Yau three-folds, 
sheaves in this paper have non-zero first and second Chern classes on some special Calabi-Yau
three-folds. So our paper studies the generalization of the Donaldson-Thomas invariants
for ideal sheaves in \cite{MNOP} with non-trivial contributions from  curve components. 
 While the methods used in \cite{Sto} and \cite{Tod2} include powerful wall-crossing techniques developed
in \cite{JS, KS} and Bridgeland stability conditions, this paper uses  complete descriptions of moduli spaces
to carry  out the computations.

The paper is organized as follows. In \S\ref{sect_Behrend}, we prove 
Theorem~C. In \S\ref{sect_Euler}, we review virtual 
Hodge polynomials, and compute the Euler characteristics of Grothendieck 
Quot-schemes. The results are of independent interest, and will be used 
in \S \ref{sect_Donaldson}. In \S \ref{sect_Donaldson}, we verify 
Theorem~A and Theorem~B.

\medskip\noindent
{\bf Acknowledgment}: The authors thank   H.L. Chang, Y.F. Jiang, S. Katz,   Jun Li, R. Pandharipande  for 
valuable discussions. In addition, the second author thanks HKUST for 
its warm hospitality and support during his visit in January 2010.

\section{\bf Behrend's $\nu$-functions for schemes with $\C^*$-actions}
\label{sect_Behrend}

For a complex scheme $X$, an invariant $\nu_X$ of $X$ was
introduced in \cite{Beh}. The invariant $\nu_X$ is 
an integer-valued constructible function defined over $X$.
Following \cite{Beh}, {\it the weighted Euler characteristic} of $X$
is defined to be
\begin{eqnarray}  \label{weighted}
\W \chi(X) = \chi(X, \nu_X) := \sum_{n \in \Z} n \cdot 
\chi \big ( \{x \in X |\nu_X(x) = n \} \big )
\end{eqnarray}
where $\chi(\cdot)$ denotes the usual Euler characteristic of topological spaces.
Let $\mathbb T = \C^*$ act on $X$ which admits a symmetric obstruction 
theory compatible with the $\mathbb T$-action. By \cite{BF}, 
if $P \in X^{\mathbb T}$ is an isolated $\mathbb T$-fixed point 
of $X$, then
\begin{eqnarray}  \label{isolated}
\nu_X(P) = (-1)^{\dim T_PX}
\end{eqnarray}
where $T_PX$ denotes the Zariski tangent space of $X$ at $P$. However, 
for the cases considered in our paper, the fixed point sets $X^{\mathbb T}$ 
will be positive dimensional. Our goal is to prove Theorem~C which 
generalizes (\ref{isolated}) to the case when $P \in X^{\mathbb T}$ is 
not necessarily an isolated $\mathbb T$-fixed point of $X$. Theorem~C reduces 
the computation of the $\nu$-function of $X$  to that of the fixed point set
$X^{\mathbb T}$.

We begin with a few technical lemmas regarding Thom classes. 
Elementary properties about Thom classes can be found in \cite{BT}. Let 
\begin{eqnarray*}
\Delta_n &=& \{(z_1, \ldots, z_n; \overline z_1, \ldots, \overline z_n)\} 
      \subset \mathbb C^n\times \mathbb C^n,   \\
F &=& \{(z_1, \ldots, z_m, 0, \ldots, 0; {\w z}_1, \ldots, {\w z}_m, 0, 
      \ldots, 0)\} \subset \mathbb C^n\times \mathbb C^n.
\end{eqnarray*}
We orient $\Delta_n$ so that the natural map $\C^n \to \Delta_n$ defined by
\begin{eqnarray}  \label{ori-Delta-n}
(z_1, \ldots, z_n) \mapsto (z_1, \ldots, z_n; \overline z_1, \ldots, 
\overline z_n)
\end{eqnarray}
is orientation-preserving. Let $\Delta_m = \Delta_n \cap F$. 
The intersection of $\Delta_n$ and $F$
along $\Delta_m$ is not transversal. However, a direct computation shows 
that $(T_{\Delta_n}|_{\Delta_m}) \cap (T_F|_{\Delta_m}) = T_{\Delta_m}$.
Consider the following commutative diagram of maps:
\begin{eqnarray}   \label{BigDiagram}
\begin{array}{ccccccc}
&0&&0&&0&   \\
&\uparrow&&\uparrow&&\uparrow&     \\
0\to&N_{\Delta_m\subset F}&\to &N_{\Delta_n \subset \mathbb C^n\times 
  \mathbb C^n}|_{\Delta_m}&\to &M&\to 0       \\
&\uparrow&&\uparrow&&\uparrow&        \\
0\to&T_F|_{\Delta_m}&\to& T_{\mathbb C^n\times \mathbb C^n}|_{\Delta_m}&\to 
  &N_{F\subset \mathbb C^n\times \mathbb C^n}|_{\Delta_m}&\to 0    \\
&\uparrow&&\uparrow&&\uparrow&      \\
0\to&T_{\Delta_m}&\to &T_{\Delta_n}|_{\Delta_m}&\to&N_{\Delta_m\subset 
  \Delta_n}&\to 0      \\
&\uparrow&&\uparrow&&\uparrow&       \\
&0&&0&&0&    
\end{array}
\end{eqnarray}
where $M$ is defined to be the quotient $\big (N_{\Delta_n \subset \mathbb C^n
\times \mathbb C^n}|_{\Delta_m} \big )/N_{\Delta_m\subset F}$. Note that
$M$ is a rank-$(2n-2m)$ real vector bundle over $\Delta_m$.
Let $\omega_{\Delta_n}$, $\omega_{\Delta_m}$ and $\omega_M$ be the Thom classes 
of the vector bundles $N_{\Delta_n \subset \mathbb C^n\times\mathbb C^n}$,
$N_{\Delta_m\subset F}$ and $M$ respectively.

\begin{lemma}  \label{excess}
Let $i: F \hookrightarrow \mathbb C^n\times \mathbb C^n$ be the inclusion map,
let $\tau: N_{\Delta_m\subset F} \to \Delta_m$ be the natural projection,
and let $\chi(M)$ be the Euler class of $M$. Then,
$$
i^*\omega_{\Delta_n} = \omega_{\Delta_m} \cup \tau^*\chi(M).
$$
\end{lemma}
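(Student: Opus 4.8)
The plan is to reduce the statement to a computation of Thom classes in the total space of the normal bundle $N_{\Delta_m \subset F}$, using the excess-intersection diagram \eqref{BigDiagram}. First I would recall the defining property of Thom classes: for a real oriented vector bundle $p\colon E \to B$ with Thom class $\omega_E \in H^{\mathrm{rk}\,E}(E, E\setminus B)$, the restriction to the zero section gives the Euler class, and under a bundle map covering an embedding the Thom class pulls back functorially. The key is to interpret $i^*\omega_{\Delta_n}$ correctly: $\omega_{\Delta_n}$ is supported near $\Delta_n$ inside a tubular neighborhood, and restricting along $i\colon F \hookrightarrow \C^n\times\C^n$ produces a cohomology class supported near $\Delta_n \cap F = \Delta_m$. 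So $i^*\omega_{\Delta_n}$ lives naturally in $H^{2n-2m'}$ where the relevant codimension is that of $\Delta_m$ in $F$ — wait, one must track degrees: $\omega_{\Delta_n}$ has degree $2n$ (the real codimension of $\Delta_n$), and since the intersection of $\Delta_n$ with $F$ along $\Delta_m$ is non-transversal with excess bundle $M$ of rank $2n-2m$, the pulled-back class should be the Thom class $\omega_{\Delta_m}$ (of degree $2(n-m) \cdot$... actually $N_{\Delta_m\subset F}$ has rank $2n-2m$ since $\dim_\R F = 2m\cdot$... here $F \cong \C^m\times\C^m$ has real dimension $4m$ and $\Delta_m$ has real dimension $2m$, so $N_{\Delta_m\subset F}$ has rank $2m$) cupped with a correction term $\tau^*\chi(M)$ of degree $2n-2m$, giving total degree $2m + (2n-2m) = 2n$, consistent.

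The main steps would be: (1) Choose tubular neighborhoods compatibly so that the tubular neighborhood of $\Delta_n$ intersects $F$ in the tubular neighborhood of $\Delta_m$ in $F$; this uses the fact, noted just before the lemma, that $(T_{\Delta_n}|_{\Delta_m}) \cap (T_F|_{\Delta_m}) = T_{\Delta_m}$, which guarantees a clean excess-intersection picture. (2) Use the middle and right vertical short exact sequences of \eqref{BigDiagram} to identify $N_{\Delta_n\subset \C^n\times\C^n}|_{\Delta_m}$ as an extension of $M$ by $N_{\Delta_m\subset F}$; choose a splitting (smoothly, after a metric) so that $N_{\Delta_n}|_{\Delta_m} \cong N_{\Delta_m\subset F} \oplus M$ as oriented bundles, where the orientation of $M$ is the induced quotient orientation. (3) Invoke the multiplicativity of Thom classes under direct sums: $\omega_{E_1\oplus E_2} = \pi_1^*\omega_{E_1}\cup \pi_2^*\omega_{E_2}$. (4) The restriction $i^*\omega_{\Delta_n}$ then becomes, under the identification of neighborhoods, the Thom class of $N_{\Delta_m\subset F}\oplus M$ restricted appropriately; since the $M$-direction is "excess" (transverse to $F$ is already accounted for), restricting to $F$ contracts the $M$-fiber to its zero section, turning $\pi_M^*\omega_M$ into $\tau^*\chi(M)$ by the Euler-class property, while the $N_{\Delta_m\subset F}$-direction survives as $\omega_{\Delta_m}$. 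Assembling gives $i^*\omega_{\Delta_n} = \omega_{\Delta_m}\cup\tau^*\chi(M)$.

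The step I expect to be the main obstacle is making (1) and (4) precise, i.e., choosing the tubular neighborhood of $\Delta_n$ in $\C^n\times\C^n$ so that it restricts compatibly to a tubular neighborhood of $\Delta_m$ in $F$ with the direct-sum splitting of step (2) realized geometrically, and then carefully tracking orientations so that the quotient orientation on $M$ appearing in $\chi(M)$ matches the conventions fixed by \eqref{ori-Delta-n}. This is a standard excess-intersection argument (as in Bott–Tits or the treatment in \cite{BT}), but the bookkeeping of orientations — especially since the intersection is genuinely non-transverse — is where sign errors would creep in, and it is the part that genuinely requires the observation $(T_{\Delta_n}|_{\Delta_m})\cap(T_F|_{\Delta_m}) = T_{\Delta_m}$ rather than just a dimension count.
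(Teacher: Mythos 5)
Your proposal is correct and follows essentially the same route as the paper: split the sequence $0\to N_{\Delta_m\subset F}\to N_{\Delta_n\subset\C^n\times\C^n}|_{\Delta_m}\to M\to 0$, use multiplicativity of Thom classes on the direct sum, and convert the $M$-factor into $\tau^*\chi(M)$ via the zero-section/Euler-class property. The tubular-neighborhood and compatibility issues you flag as the main obstacle are dispatched in the paper simply by linearity: since $\Delta_m$ is a real linear subspace of $F$, the total space of $N_{\Delta_m\subset F}$ is identified with $F$ itself, so the restriction along $i$ is literally restriction to that normal bundle and no further bookkeeping is needed.
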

\begin{proof}
The exact sequence $0\to N_{\Delta_m\subset F}\to N_{\Delta_n \subset \mathbb C^n
\times \mathbb C^n}|_{\Delta_m}\to M\to 0$ splits as smooth vector bundles.
Let $\pi_1$ and $\pi_2$ be the projections of the bundle $N_{\Delta_m\subset F}
\oplus M$ to $N_{\Delta_m\subset F}$ and $M$ respectively. 
Then we have a commutative diagram:
\begin{eqnarray*}
\begin{array}{cccc}
N_{\Delta_n \subset \mathbb C^n\times\mathbb C^n}|_{\Delta_m}\cong&
   N_{\Delta_m\subset F}\oplus M&\buildrel{\pi_2}\over\longrightarrow &M  \\
&\downarrow{\pi_1}&&\downarrow{}  \\
&N_{\Delta_m\subset F}&\buildrel\tau\over\longrightarrow &\Delta_m.
\end{array}
\end{eqnarray*}
Thus $\omega_{\Delta_n}|_{(N_{\Delta_n \subset \C^n \times \C^n}|_{\Delta_m})}
=\pi_1^*\omega_{\Delta_m} \cup \pi_2^*\omega_M$ where $\pi_2^*\omega_M$ is 
the Thom class of $\tau^*M$ over the space $N_{\Delta_m\subset F}$. 
Since $N_{\Delta_m\subset F}\oplus M$ can be identified with $\tau^*M$ and 
$$
N_{\Delta_m\subset F}\hookrightarrow N_{\Delta_m\subset F}\oplus M=\tau^*M
$$ 
can be regarded as the zero section of the bundle $\tau^*M$ over 
$N_{\Delta_m\subset F}$, we have
$$
\omega_{\Delta_n}|_{(N_{\Delta_m\subset F})}
=\omega_{\Delta_m} \cup \tau^*\chi(M).
$$
Here we have used the fact that if $s$ is the zero section of a vector bundle 
$E\to Y$ and $\omega$ is the Thom class of $E$, then $s^*\omega$ is the Euler 
class $\chi(E)$ of $E$. Since $\Delta_m$ is a real linear subspace of $F$,
$N_{\Delta_m \subset F} = F$. Therefore, $i^*\omega_{\Delta_n} = 
\omega_{\Delta_m} \cup \tau^*\chi(M)$.
\end{proof}

Next, we fix some $S^1$-action on $\mathbb C^n\times \mathbb C^n$.
Let $z_1, \ldots, z_n, {\w z}_1, \ldots, {\w z}_n$ 
be the coordinates of $\C^n \times \C^n$. 
For $t \in S^1$ and $1 \le i \le n$, let $t(z_i) = t^{r_i} z_i$ 
and $t({\w z}_i) = t^{-r_i} {\w z}_i$ where $r_i \in \Z$. Let
$r_i = 0$ for $1 \le i \le m$ and $r_i \ne 0$ for $m+1 \le i \le n$. Then, 
$$
F = (\mathbb C^n\times \mathbb C^n)^{S^1}.
$$

\begin{lemma}  \label{chiM}
In the $S^1$-equivariant cohomology $H^*_{S^1}(F)$, we have
$$
i^*\omega_{\Delta_n} 
= (-1)^{n-m}(r_{m+1} \cdots r_n)t^{n-m} \,\, \omega_{\Delta_m}.
$$
\end{lemma}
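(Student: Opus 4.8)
\emph{Proof proposal.} The plan is to promote Lemma~\ref{excess} to $S^1$-equivariant cohomology and then compute the equivariant Euler class of $M$ by identifying it, together with its orientation, as an explicit $S^1$-representation. Every vector bundle and every map occurring in the diagram~\eqref{BigDiagram} is $S^1$-equivariant for the action just fixed (with $F = (\C^n\times\C^n)^{S^1}$), so the argument proving Lemma~\ref{excess} applies verbatim with the ordinary Thom and Euler classes replaced by their $S^1$-equivariant versions, giving in $H^*_{S^1}(F)$ the identity $i^*\omega_{\Delta_n} = \omega_{\Delta_m}\cup\tau^*\chi_{S^1}(M)$. Hence it suffices to compute the equivariant Euler class $\chi_{S^1}(M)\in H^*_{S^1}(\Delta_m)$. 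Since $S^1$ acts trivially on $F$, and $N_{\Delta_m\subset F}=F$ retracts $S^1$-equivariantly onto a point, one has $H^*_{S^1}(F)\cong H^*_{S^1}(\mathrm{pt})=\Z[t]$; thus $\chi_{S^1}(M)$ is necessarily a constant multiple of $t^{\,n-m}$, and the entire content of the lemma is to determine that constant, i.e.\ to pin down the orientation on $M$.

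Next I would introduce the real coordinate change $(z_1,\dots,z_n,{\w z}_1,\dots,{\w z}_n)\mapsto(z_1,\dots,z_n,\eta_1,\dots,\eta_n)$ with $\eta_i:={\w z}_i-\overline{z_i}$. In these coordinates $\Delta_n=\{\eta_1=\cdots=\eta_n=0\}$, and $N_{\Delta_n\subset\C^n\times\C^n}$ is canonically the trivial bundle over $\Delta_n$ with fiber $\C^n_\eta$. A short linear computation — writing $\mathrm{Re}\,{\w z}_i=\mathrm{Re}\,z_i+\mathrm{Re}\,\eta_i$, $\mathrm{Im}\,{\w z}_i=-\mathrm{Im}\,z_i+\mathrm{Im}\,\eta_i$ and noting that in the top exterior power the differentials already present along $\Delta_n$ absorb the $z$-terms — shows that the standard complex orientation of $\C^n\times\C^n$ equals the given orientation of $\Delta_n$ wedged with the complex orientation of $\C^n_\eta$; hence the co-orientation of $\Delta_n$ used to define $\omega_{\Delta_n}$ is exactly the complex orientation of $\C^n_\eta$. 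Moreover, for $t\in S^1$ one computes $t\cdot\eta_i=t^{-r_i}\eta_i$, so as an oriented $S^1$-equivariant bundle $N_{\Delta_n\subset\C^n\times\C^n}\cong\bigoplus_{i=1}^n\C_{(-r_i)}$, where $\C_{(k)}$ denotes the weight-$k$ character.

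Restricting to $\Delta_m$ and tracking the top row of~\eqref{BigDiagram}: on $F$ the surviving coordinates are $z_1,\dots,z_m,\eta_1,\dots,\eta_m$, so $N_{\Delta_m\subset F}\cong\bigoplus_{i=1}^m\C_{(-r_i)}$ (with trivial action, since $r_i=0$ for $i\le m$), and the inclusion $N_{\Delta_m\subset F}\hookrightarrow N_{\Delta_n\subset\C^n\times\C^n}|_{\Delta_m}$ in~\eqref{BigDiagram} is the inclusion of the first $m$ coordinate lines. Therefore $M\cong\bigoplus_{i=m+1}^n\C_{(-r_i)}$, and the orientation that~\eqref{BigDiagram} induces on $M$ is its complex orientation. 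Consequently
\[
\chi_{S^1}(M)=\prod_{i=m+1}^n c_1^{S^1}\!\big(\C_{(-r_i)}\big)=\prod_{i=m+1}^n(-r_i\,t)=(-1)^{n-m}(r_{m+1}\cdots r_n)\,t^{\,n-m},
\]
and feeding this into the equivariant form of Lemma~\ref{excess} yields $i^*\omega_{\Delta_n}=(-1)^{n-m}(r_{m+1}\cdots r_n)\,t^{\,n-m}\,\omega_{\Delta_m}$, as claimed.

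I expect the one genuine obstacle to be the orientation bookkeeping: verifying that the complex structure supplied by the coordinates $\eta_i$ induces precisely the co-orientation built into $\omega_{\Delta_n}$ (with no stray sign), that the short exact sequences in~\eqref{BigDiagram} transport orientations compatibly so that $M$ carries its complex orientation rather than the opposite one, and that all of this is consistent with the normalization of $t\in H^2_{S^1}(\mathrm{pt})$ as $c_1^{S^1}$ of the weight-one character. Once these conventions are fixed, the weight count $t\cdot\eta_i=t^{-r_i}\eta_i$ is immediate and the sign $(-1)^{n-m}$ comes out automatically.
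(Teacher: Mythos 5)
Your proposal is correct and follows essentially the same route as the paper: both reduce to the equivariant form of Lemma~\ref{excess} and then identify $M$ with $\bigoplus_{i=m+1}^n\C_{(-r_i)}$ via the map $(z;\w z)\mapsto(\w z_i-\overline z_i)$ (your coordinates $\eta_i$ are exactly the paper's maps $\varphi,\psi$), including the orientation check and the observation that $t\in S^1$ acts on $\w z_i-\overline z_i$ with weight $-r_i$. The only difference is presentational: you phrase the identification as a coordinate change and an orientation comparison of top exterior powers, while the paper checks orientations on explicit bases.
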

\begin{proof}
Note that the arguments in the proof of Lemma~\ref{excess} go through
in the $S^1$-equivariant setting (e.g., the smooth splitting 
$N_{\Delta_n \subset \mathbb C^n\times\mathbb C^n}|_{\Delta_m} \cong 
N_{\Delta_m\subset F}\oplus M$ is $S^1$-equivariant). 
Thus Lemma~\ref{excess} holds in $H^*_{S^1}(F)$, and it suffices to show that
\begin{eqnarray}     \label{chiM.1}
\chi_{S^1}(M) = (-1)^{n-m}(r_{m+1} \cdots r_n)t^{n-m}.
\end{eqnarray}
Let $P \in \mathbb C^n\times \mathbb C^n$ be the origin. 
By the first horizontal exact sequence in (\ref{BigDiagram}),
\begin{eqnarray}     \label{chiM.2}
M|_P =
(N_{\Delta_n \subset \C^n\times \C^n})|_P/(N_{\Delta_m\subset F})|_P.
\end{eqnarray}

Let $N_n = (N_{\Delta_n \subset \C^n\times \C^n})|_P = 
T_P(\mathbb C^n\times \mathbb C^n)/T_P\Delta_n$ be the normal space.
For each $z_i\in \mathbb C$, write $z_i=x_i+\sqrt{-1}y_i$ and 
$\w z_i= \w x_i+\sqrt{-1} \w y_i$. Then,
$$
T_P\Delta_n = \{(z_1, \ldots, z_n; \overline z_1, \ldots, \overline z_n)\}
\subset T_P(\mathbb C^n\times \mathbb C^n).
$$
As an $\mathbb R$-subspace of 
$T_P(\mathbb C^n\times \mathbb C^n) \cong \C^n\times \C^n$,
$T_P\Delta_n$ has a basis consisting of
\begin{eqnarray*}
\vec u_1&=&(1, 0, \ldots, 0, 0; 1, 0, \ldots, 0, 0),\\
\vec u_2&=&(0, 1, \ldots, 0,0; 0, -1, \ldots, 0, 0),\\
&\vdots&\\
\vec u_{2n-1}&=&(0, 0, \ldots, 1, 0;0, 0,\ldots, 1, 0),\\
\vec u_{2n}&=&(0, 0, \ldots, 0, 1;0, 0, \ldots, 0, -1).
\end{eqnarray*}
Recall that the orientation of $\Delta_n$ is such that the map 
(\ref{ori-Delta-n}) is orientation preserving. Thus the ordered basis 
$\{ \vec u_1, \ldots, \vec u_{2n}\}$ is the orientation of $T_P\Delta_n$. 
Define 
$$
\varphi\colon T_P(\mathbb C^n\times \mathbb C^n ) \to \mathbb C^n
$$ 
by $\varphi(z_1, \ldots, z_n; \w z_1, \ldots, \w z_n) = 
(\w z_1-\overline z_1, \ldots, \w z_n-\overline z_n)$.
It is easy to check that $\varphi(T_P\Delta_n)=0$. 
Thus $\varphi$ induces an isomorphism of vector spaces over $\mathbb R$:
$$
N_n=\frac{T_P(\mathbb C^n\times \mathbb C^n)}{T_P\Delta_n}\to \mathbb C^n,
$$
stilled denoted by $\varphi$. The quotient space $N_n$ is equipped with 
the orientation 
\begin{eqnarray*}
\vec v_1&=&(0, \ldots, 0; 1, \ldots, 0),\\
&\vdots&\\
\vec v_{2n}&=&(0, \ldots, 0; 0, \ldots, 1).
\end{eqnarray*}
Now $\varphi(\vec v_1)=(1, \ldots, 0), \ldots, \varphi(\vec v_{2n}) =
(0, \ldots, 1)$. Since $\{\vec u_1, \ldots, \vec u_{2n}, \vec v_1, \ldots, 
\vec v_{2n}\}$ agrees with the orientation of $T_P(\C^n\times \C^n)$, 
$\varphi$ is orientation-preserving. 

Similarly, let $N_m = (N_{\Delta_m\subset F})|_P = T_PF/T_P\Delta_m$. 
Then we have
\begin{eqnarray*}
N_m = 
\frac{\{(z_1, \ldots, z_m, 0,\ldots, 0;\w z_1, \ldots,\w z_m, 0\ldots, 0)\}}
   {\{(z_1, \ldots, z_m, 0, \ldots, 0;
   \overline z_1, \ldots, \overline z_m, 0, \ldots, 0)\}}
\subset \frac{T_P(\mathbb C^n\times \mathbb C^n)}{T_P\Delta_n} = N_n.
\end{eqnarray*}
Introduce a well-defined map $\psi\colon N_n\to \mathbb C^{n-m}$ given by
$$
[(z_1, \ldots, z_n; \w z_1, \ldots, \w z_n)] \mapsto 
(\w z_{m+1}-\overline z_{m+1}, \ldots, \w z_n-\overline z_n).
$$
Clearly  $\psi(N_m)=0$. Therefore $\psi$ induces an isomorphism of vector
spaces over $\mathbb R$. 
$$
M|_P =\frac{N_n}{N_m}\to \mathbb C^{n-m},
$$
stilled denoted by $\psi$, where we have used (\ref{chiM.2}). One checks that
\begin{eqnarray*}
\psi(\vec v_{2m+1})=(1, \ldots, 0), \ldots, \psi(\vec v_{2n})=(0, \ldots, 1).
\end{eqnarray*}
So $\psi$ is an orientation-preserving isomorphism. Define an $S^1$-action 
on $\C^{n-m}$ by 
$$
t\cdot (a_{m+1}, \ldots, a_n)=(t^{-r_{m+1}}a_{m+1}, \ldots, t^{-r_n}a_n).
$$
Then the isomorphism $\psi$ is $S^1$-equivariant since
\begin{eqnarray*}
&&\psi\big(t\cdot [(z_1, \ldots, z_n; \w z_1, \ldots, \w z_n)]\big)\\
&=&\psi\big([(z_1, \ldots, z_m, t^{r_{m+1}}z_{m+1}, \ldots, t^{r_n}z_n; 
  \w z_1, \ldots, \w z_m, t^{-r_{m+1}}\w z_{m+1}, \ldots, t^{-r_n}\w z_n)]\big)\\
&=&(t^{-r_{m+1}}\w z_{m+1}-\overline{t^{r_{m+1}}z_{m+1}}, \ldots, 
  t^{-r_n}\w z_n -\overline{t^{r_n}z_n})\\
&=&\big(t^{-r_{m+1}}(\w z_{m+1}-\overline z_{m+1}), \ldots, 
  t^{-r_n}(\w z_n-\overline z_n)\big)\\
&=&t\cdot \psi\big([(z_1, \ldots, z_n; \w z_1, \ldots, \w z_n)]\big).
\end{eqnarray*}
It follows that 
$\chi_{S^1}(M)=(-r_{m+1})t \cdots (-r_n)t=(-1)^{n-m}(r_{m+1}\cdots r_n)t^{n-m}$.
\end{proof}

We continue with the $S^1$-action on $\mathbb C^n\times \mathbb C^n$ defined earlier.
Fix holomorphic functions $f_1, \ldots, f_n$ in the variables $z_1, \ldots, z_n$
such that the degree of each $f_i$ with respect to the $S^1$-action is $-r_i$. 
Then $f_i(z_1, \ldots, z_m, 0, \ldots, 0) = 0$ for all $m+1 \le i \le n$. For $1 \le 
i \le m$, let $\w f_i(z_1, \ldots, z_m) = f_i(z_1, \ldots, z_m, 0, \ldots, 0)$.
Then, $\w f_1, \ldots, \w f_m$ are holomorphic in $z_1, \ldots, z_m$.
Let $\Gamma \subset \C^n \times \C^n$ be defined by the equations $\w z_1 = f_1, 
\ldots, \w z_n = f_n$. Then $\Gamma \cap F$ is given by 
$\{\w z_1 = \w f_1, \ldots, \w z_m = \w f_m \} \subset F$.

\begin{lemma}  \label{Gamma}
In the $S^1$-equivariant cohomology $H^*_{S^1}(F)$, we have
$$
i^*\omega_{\Gamma} 
= (-1)^{n-m}(r_{m+1} \cdots r_n)t^{n-m} \,\, \omega_{\Gamma \cap F},
$$
where $\omega_\Gamma$ is the Thom class of the normal bundle of $\Gamma$ in 
$\mathbb C^n\times \mathbb C^n$. and $\omega_{\Gamma\cap F}$ is the Thom class of the normal
bundle of $\Gamma\cap F$ in $F$.
\end{lemma}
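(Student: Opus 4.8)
The plan is to deduce Lemma~\ref{Gamma} from Lemma~\ref{chiM} by means of an explicit $S^1$-equivariant diffeomorphism of $\C^n\times\C^n$ that straightens the holomorphic graph $\Gamma$ onto the anti-diagonal $\Delta_n$. Define $\Phi\colon\C^n\times\C^n\to\C^n\times\C^n$ by
\[
\Phi(z_1,\dots,z_n;\w z_1,\dots,\w z_n)=\big(z_1,\dots,z_n;\ \w z_1-f_1(z)+\overline z_1,\ \dots,\ \w z_n-f_n(z)+\overline z_n\big).
\]
This is a diffeomorphism, with inverse obtained by replacing $-f_i(z)+\overline z_i$ by $+f_i(z)-\overline z_i$; its differential is block lower triangular with identity diagonal blocks. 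I would first record three elementary verifications: (a) $\Phi$ is $S^1$-equivariant, because $\w z_i$, $f_i(z)$ and $\overline z_i$ all transform with weight $-r_i$; (b) $\Phi(F)=F$, because $f_i(z_1,\dots,z_m,0,\dots,0)=0$ for $i>m$ makes the last $n-m$ new $\w z$-coordinates vanish on $F$; (c) $\Phi(\Gamma)=\Delta_n$ and $\Phi(\Gamma\cap F)=\Delta_m$, by substituting $\w z_i=f_i(z)$ on $\Gamma$ (resp.\ $\w z_i=\w f_i(z_1,\dots,z_m)$ together with $f_i(z)=\w f_i(z_1,\dots,z_m)$ on $\Gamma\cap F$).

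Next I would transport Thom classes along $\Phi$. Since $\Phi$ carries $\Gamma$ diffeomorphically onto $\Delta_n$ it identifies tubular neighbourhoods, so $\Phi^*\omega_{\Delta_n}=\pm\,\omega_\Gamma$, the sign recording whether the induced bundle isomorphism $N_{\Gamma\subset\C^n\times\C^n}\to N_{\Delta_n\subset\C^n\times\C^n}$ respects the chosen orientations. The normal bundle of the holomorphic graph $\Gamma$ is identified $\C$-linearly, hence orientation compatibly, with the second copy of $\C^n$ via $[(v,w)]\mapsto w-Df(z)v$ (here $f$ holomorphic is essential), while the proof of Lemma~\ref{chiM} identifies $N_{\Delta_n}$ with the same $\C^n$, orientation compatibly, via $\varphi\colon[(v,w)]\mapsto w-\overline v$. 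Since $d\Phi(v,w)=(v,\,w-Df(z)v+\overline v)$, one has $\varphi(d\Phi(v,w))=w-Df(z)v$, i.e.\ $\Phi$ induces the identity of $\C^n$ between the two models; hence the sign is $+$ and $\Phi^*\omega_{\Delta_n}=\omega_\Gamma$. Running the identical argument inside $F$ with the holomorphic functions $\w f_1,\dots,\w f_m$ of $z_1,\dots,z_m$ in place of $f_1,\dots,f_n$ gives $(\Phi|_F)^*\omega_{\Delta_m}=\omega_{\Gamma\cap F}$.

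Finally I would assemble the pieces. Because $\Phi(F)=F$, the square $i\circ(\Phi|_F)=\Phi\circ i$ commutes, so $(\Phi|_F)^*\,i^*\omega_{\Delta_n}=i^*\Phi^*\omega_{\Delta_n}=i^*\omega_\Gamma$. Applying $(\Phi|_F)^*$ to the identity of Lemma~\ref{chiM} in $H^*_{S^1}(F)$ — noting that $(\Phi|_F)^*$ fixes the coefficient $t\in H^*(BS^1)$ and the integer $(-1)^{n-m}r_{m+1}\cdots r_n$ and sends $\omega_{\Delta_m}$ to $\omega_{\Gamma\cap F}$ — gives exactly $i^*\omega_\Gamma=(-1)^{n-m}(r_{m+1}\cdots r_n)t^{n-m}\,\omega_{\Gamma\cap F}$.

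The step I expect to be the real obstacle is the orientation bookkeeping in the second paragraph: one must be sure that $\omega_\Gamma$, defined through the complex orientation of $N_\Gamma$, equals $\Phi^*\omega_{\Delta_n}$ with no spurious sign, i.e.\ that $\Phi$ carries the complex orientation of $N_\Gamma$ to the orientation of $N_{\Delta_n}$ fixed in the proof of Lemma~\ref{chiM}; the remaining verifications are routine substitutions. If this route is deemed too delicate, one can instead repeat the proofs of Lemma~\ref{excess} and Lemma~\ref{chiM} with $\Gamma$ in place of $\Delta_n$: the intersection of $\Gamma$ with $F$ along $\Gamma\cap F$ is clean because $\partial f_i/\partial z_j=0$ on $\Gamma\cap F$ for $i>m$ and $j\le m$; the corresponding excess bundle $M_\Gamma$ carries on each fibre over $\Gamma\cap F$ the $S^1$-representation $\bigoplus_{i=m+1}^{n}\C_{-r_i}$ with $\C_{-r_i}$ of weight $-r_i$, again because $\partial f_i/\partial z_j$ vanishes at fixed points unless $r_i+r_j=0$; and its equivariant Euler class is therefore $(-1)^{n-m}(r_{m+1}\cdots r_n)t^{n-m}$. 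This variant duplicates more of the earlier computation but avoids the diffeomorphism.
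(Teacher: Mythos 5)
Your primary argument is correct, but it takes a genuinely different route from the paper, whose proof is exactly the fallback you sketch at the end: the paper redoes the excess-intersection computation of Lemma~\ref{excess} and Lemma~\ref{chiM} directly for $\Gamma$, writing down explicit spanning vectors $\vec w_1,\ldots,\vec w_n$ of $N_{\Gamma\subset\C^n\times\C^n}|_{\Gamma\cap F}$, checking that $(T_\Gamma|_{\Gamma\cap F})\cap(T_F|_{\Gamma\cap F})=T_{\Gamma\cap F}$, identifying the excess bundle $\W M$ as the trivial complex bundle spanned by $\vec w_{m+1},\ldots,\vec w_n$, and reading off from the homogeneity of the $f_i$ that its weights are $-r_{m+1},\ldots,-r_n$, whence $\chi_{S^1}(\W M)=(-1)^{n-m}(r_{m+1}\cdots r_n)t^{n-m}$. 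Your main route instead straightens $\Gamma$ onto $\Delta_n$ by the shear $\Phi(z;\w z)=(z;\w z-f(z)+\overline z)$, which is $S^1$-equivariant (only $S^1$-, not $\C^*$-equivariant, because of the $\overline z_i$ terms, but that is all the statement requires), preserves $F$, carries $(\Gamma,\Gamma\cap F)$ onto $(\Delta_n,\Delta_m)$, and then pulls back Lemma~\ref{chiM}. The one delicate point, $\Phi^*\omega_{\Delta_n}=\omega_\Gamma$ with no sign, you handle correctly: since $f$ is holomorphic, $[(v,w)]\mapsto w-Df(z)v$ is a complex-linear identification of $N_\Gamma$ with $\C^n$, hence compatible with the quotient orientation coming from the chosen orientation of $\Gamma$, and your computation $\varphi(d\Phi(v,w))=w-Df(z)v$ shows $\Phi$ intertwines it with the orientation of $N_{\Delta_n}$ fixed in the proof of Lemma~\ref{chiM}; the same argument inside $F$ with $\w f_1,\ldots,\w f_m$ gives $(\Phi|_F)^*\omega_{\Delta_m}=\omega_{\Gamma\cap F}$, and the naturality step $(\Phi|_F)^*i^*=i^*\Phi^*$ finishes the proof. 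What your route buys is that no new weight computation is needed: the lemma becomes a formal consequence of Lemma~\ref{chiM} together with naturality of equivariant Thom classes under equivariant, orientation-compatible diffeomorphisms of pairs. What the paper's route buys is that it stays entirely with explicit bases and weight counts, never invoking transport of Thom classes and orientations along a diffeomorphism, which is precisely the bookkeeping you rightly identify as the only real obstacle in your approach.
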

\begin{proof}
First of all, note that the tangent space of $\Gamma$ is spanned by 
the ordered basis
$$
(1, \ldots, 0; \frac{\partial f_1}{\partial z_1}, \ldots, 
\frac{\partial f_n}{\partial z_1}), \ldots, (0, \ldots, 1; 
\frac{\partial f_1}{\partial z_n}, \ldots, 
\frac{\partial f_n}{\partial z_n}).
$$
Thus the normal vector bundle $N_{\Gamma\subset \mathbb C^n\times \mathbb C^n}$ 
is spanned by 
$$
\big(\frac{\partial f_1}{\partial z_1},\ldots, \frac{\partial f_1}{\partial z_n}; 
-1, \ldots 0\big), \ldots, \big(\frac{\partial f_n}{\partial z_1}, \ldots, 
\frac{\partial f_n}{\partial z_n}; 0, \ldots, -1\big).
$$
Using these, one checks that $(T_{\Gamma}|_{\Gamma \cap F}) \cap 
(T_F|_{\Gamma \cap F}) = T_{\Gamma \cap F}$ and that 
$$
\W M \,\, := \,\,\big (N_{\Gamma \subset \C^n \times 
\C^n}|_{\Gamma \cap F} \big )/N_{\Gamma \cap F\subset F}
$$
is a complex bundle. By the same arguments as in the proofs of 
Lemma~\ref{excess} and Lemma~\ref{chiM}, 
$i^*\omega_{\Gamma} = \omega_{\Gamma \cap F} \cup \chi_{S^1}(\W M)$. 
Hence it remains to prove that 
$$
\chi_{S^1}(\W M) = (-1)^{n-m}(r_{m+1}\cdots r_n)t^{n-m}.
$$

Next, note from the assumptions about the functions $f_1, \ldots, f_m, f_{m+1}, 
\ldots, f_n$ that the restriction $N_{\Gamma\subset \C^n \times \C^n}|_{\Gamma\cap F}$ 
is spanned by
\begin{eqnarray*}
 \vec w_1&=&\big(\frac{\partial \w f_1}{\partial z_1}, \ldots, 
\frac{\partial \w f_1}{\partial z_m}, 0. \ldots, 0; -1, \ldots, 0, 0\ldots, 0\big),\\
 &\vdots\\
 \vec w_m&=&\big(\frac{\partial \w f_m}{\partial z_1}, \ldots, 
\frac{\partial \w f_m}{\partial z_m}, 0. \ldots, 0; 0, \ldots, -1, 0\ldots, 0\big),\\
 \vec w_{m+1}&=&\big( 0. \ldots, 0, \frac{\partial f_{m+1}}{\partial z_{m+1}}, 
\ldots, \frac{\partial f_{m+1}}{\partial z_n}; 0, \ldots, 0, -1\ldots, 0\big),\\
 &\vdots&\\
  \vec w_{n}&=&\big( 0. \ldots, 0, \frac{\partial f_{n}}{\partial z_{m+1}}, \ldots, 
\frac{\partial f_{n}}{\partial z_n}; 0, \ldots, 0, 0\ldots, -1\big).
\end{eqnarray*}
In addition, since the degree of $f_{m+1}$ with respect to the $S^1$-action is $-r_{m+1}$,
for each component $\partial f_{m+1}/\partial z_j$ in $\vec w_{m+1}$, we have
either $\partial f_{m+1}/\partial z_j = 0$ or $r_j=-r_{m+1}$. So $t\cdot \vec w_{m+1} = 
t^{-r_{m+1}} \vec w_{m+1}$ under the $S^1$-action on $\C^n \times \C^n$, where we regard
$\vec w_{m+1} \in \C^n \times \C^n$ with $z_{m+1} = \ldots = z_n = 0$ and with 
$z_1, \ldots, z_m$ fixed. Similarly,
$$
t\cdot \vec w_j=t^{-r_j}\vec w_j\quad\hbox{ for }\quad j\ge m+1.
$$

Since the normal bundle $N_{\Gamma\cap F\subset F}$ is spanned by 
$\vec w_1, \ldots, \vec w_m$, the bundle $\W M$ is spanned by 
$\vec w_{m+1}, \ldots, \vec w_n$, i.e., the bundle $\W M$ is $S^1$-equivariantly 
isomorphic to the trivial bundle $(\Gamma\cap F)\times \mathbb C^{n-m}$ where 
$S^1$ acts on $\mathbb C^{n-m}$ by
$$
t\cdot (b_{m+1}, \ldots, b_n)=(t^{-r_{m+1}}b_{m+1}, \ldots, t^{-r_n}b_n).
$$
Therefore, $\chi_{S^1}(\W M) = (-r_{m+1})t \cdots (-r_n)t = 
(-1)^{n-m}(r_{m+1}\cdots r_n)t^{n-m}$.
\end{proof}

\begin{theorem}  \label{fixedset}
Assume that $\mathbb T$ acts on a complex scheme $X$
and that $X$ admits a symmetric obstruction theory compatible 
with the $\mathbb T$-action. Let $P \in X^{\mathbb T}$. Then,
\begin{enumerate}
\item[(i)] $X^{\mathbb T}$ admits a symmetric obstruction theory;

\item[(ii)] $\displaystyle{\nu_X(P) = (-1)^{\dim T_PX \, - \,
\dim T_P(X^{\mathbb T})} \cdot \nu_{X^{\mathbb T}}(P)}$.
\end{enumerate}
\end{theorem}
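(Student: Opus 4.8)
I would prove (i) by passing to the $\mathbb T$-fixed (weight-zero) part of the given data, and (ii) by a local computation near $P$ that turns the restriction to the fixed locus into the Thom-class identities of Lemmas~\ref{excess}--\ref{Gamma} combined with $\mathbb T$-equivariant localization; the sign $(-1)^{\dim T_PX-\dim T_P(X^{\mathbb T})}$ is exactly the excess Euler class produced by those lemmas. Note that, since by \cite{MPT} $X$ need not be locally a critical locus, one cannot simply invoke the Milnor-fibre formula for $\nu_X$; the argument must use Behrend's intrinsic construction throughout.

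\textbf{Part (i).} Because $\mathbb T$ is linearly reductive, the restriction $E^\bullet|_{X^{\mathbb T}}$ splits canonically into $\mathbb T$-weight summands; write $(E^\bullet)^{\mathrm f}$ for the weight-zero piece. The composite $(E^\bullet)^{\mathrm f}\hookrightarrow E^\bullet|_{X^{\mathbb T}}\to L_X|_{X^{\mathbb T}}\to L_{X^{\mathbb T}}$ is a perfect obstruction theory for $X^{\mathbb T}$: this is the standard fact underlying virtual localization (Graber--Pandharipande), applied here. The symmetry $\theta\colon E^\bullet\xrightarrow{\sim}(E^\bullet)^\vee[1]$ is $\mathbb T$-equivariant, hence sends the weight-$w$ summand of $E^\bullet$ isomorphically onto the weight-$w$ summand of $(E^\bullet)^\vee[1]$, which is dual to the weight-$(-w)$ summand of $E^\bullet$; taking $w=0$ yields a symmetric isomorphism $(E^\bullet)^{\mathrm f}\xrightarrow{\sim}\big((E^\bullet)^{\mathrm f}\big)^\vee[1]$. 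This is the asserted symmetric obstruction theory on $X^{\mathbb T}$.

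\textbf{Part (ii).} Both $\nu_X$ and $\nu_{X^{\mathbb T}}$ are local on $X$, so I pass to a $\mathbb T$-invariant analytic neighbourhood of $P$. Using the local structure of symmetric obstruction theories I choose a $\mathbb T$-equivariant closed embedding $X\hookrightarrow M$ into a smooth $M$ with $P\in M^{\mathbb T}$, presenting the obstruction theory as $[\,T_M|_X\to\Omega_M|_X\,]$ with $X=Z(s)$ for a $\mathbb T$-equivariant $1$-form $s$ whose differential is symmetric along $X$, and I take the presentation minimal at $P$, so that $T_PM=T_PX$ and $\dim M=\dim T_PX=:n$. Linearising the $\mathbb T$-action at $P$ identifies $M$ near $P$ with $\mathbb C^n$ on which $\mathbb T$ acts with weights $r_1=\cdots=r_m=0$ and $r_{m+1},\dots,r_n\neq0$; then $M^{\mathbb T}=\mathbb C^m$, $X^{\mathbb T}=X\cap M^{\mathbb T}$, the induced $X^{\mathbb T}\hookrightarrow M^{\mathbb T}$ is the minimal presentation of the symmetric obstruction theory from (i), and since $T_P(X^{\mathbb T})=(T_PX)^{\mathbb T}=(T_PM)^{\mathbb T}=\mathbb C^m$ one gets $n-m=\dim T_PX-\dim T_P(X^{\mathbb T})$. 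In $T^*M=\mathbb C^n\times\mathbb C^n$ the fixed locus is $F=(T^*M)^{\mathbb T}=T^*M^{\mathbb T}$ — precisely the situation of Lemmas~\ref{excess}--\ref{Gamma}. Now I express $\nu_X(P)$ via Behrend: $\nu_X$ is the constructible function whose conormal/characteristic cycle in $T^*M$ is the Lagrangian cycle attached to the intrinsic normal cone $C_{X/M}$ (Lagrangian exactly because the obstruction theory is symmetric), and $\nu_X(P)$ is extracted from that cycle by a local Euler-obstruction / transverse-slice computation, which cohomologically amounts to pairing the Thom class of each conormal component against the class of a small transverse disc at $P$. Carrying this out $\mathbb T$-equivariantly and then restricting along $i\colon F\hookrightarrow T^*M$, Lemma~\ref{Gamma} (with Lemmas~\ref{excess} and \ref{chiM} supplying the excess-bundle identification) shows that $i^*$ multiplies the relevant Thom classes by the excess Euler class $(-1)^{n-m}(r_{m+1}\cdots r_n)\,t^{n-m}$, while $C_{X/M}$ restricts to the Lagrangian cycle of $X^{\mathbb T}$ in $T^*M^{\mathbb T}=F$. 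Since $i^*$ is an isomorphism after inverting $t$ and the monomial $(r_{m+1}\cdots r_n)t^{n-m}$ is a non-zero-divisor that cancels against the equivariant Euler class of the normal bundle of $F$, the only surviving contribution is the sign $(-1)^{n-m}$. Reading off the integer values yields $\nu_X(P)=(-1)^{n-m}\nu_{X^{\mathbb T}}(P)=(-1)^{\dim T_PX-\dim T_P(X^{\mathbb T})}\nu_{X^{\mathbb T}}(P)$, recovering the isolated case of \cite{BF} when $m=0$.

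\textbf{Main obstacle.} The delicate step is the third one: reconciling Behrend's intrinsic (microlocal) definition of $\nu_X$ with the explicit Thom-class calculus of Lemmas~\ref{excess}--\ref{Gamma}, i.e. setting up the $\mathbb T$-equivariant framework in which ``evaluating $\nu_X$ at the fixed point $P$'' is literally computed by restricting a Thom class to $F$ and discarding the invertible, $t$-divisible excess factor. This requires (a) the $\mathbb T$-equivariant minimal local model for symmetric obstruction theories, (b) the Lagrangian property of $C_{X/M}$ and its compatibility with passage to the fixed subscheme, so that the Lagrangian cycle of $X^{\mathbb T}$ is genuinely $C_{X/M}\cap F$, and (c) a careful localization/divisibility argument ensuring that exactly the sign $(-1)^{n-m}$ — and none of the rest of the equivariant Euler class — survives. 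Once these are in place the computation is forced.
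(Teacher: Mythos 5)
Your part (i) is fine and in fact takes a different (more global) route than the paper: you pass to the $\mathbb T$-weight-zero part of the equivariant obstruction theory, Graber--Pandharipande style, and observe that the symmetry pairs weight $w$ with weight $-w$, so the fixed part is again symmetric; the paper instead argues \'etale locally, writing $X=Z(\omega)\subset\C^n$ for a $\mathbb T$-invariant almost closed $1$-form $\omega=\sum_i f_idz_i$ and checking that $\omega^{\mathbb T}=\sum_{i\le m}\w f_i\,dz_i$ is again almost closed. Either argument is acceptable, and your identification $T_P(X^{\mathbb T})=(T_PX)^{\mathbb T}=\C^m$ is a legitimate shortcut for the paper's Jacobian computation.

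The genuine gap is in part (ii), and it is exactly the point you flag as the ``main obstacle'' without resolving it: you feed the cone $C_{X/M}$ (or its Lagrangian/characteristic cycle) directly into the restriction-to-$F$ computation and assert that it restricts to the corresponding cycle for $X^{\mathbb T}$, i.e.\ that the fixed-locus cycle is ``genuinely $C_{X/M}\cap F$.'' That statement is not justified and is not obviously true: intersecting a possibly very singular cone cycle with the linear subspace $F$ can change components and multiplicities, so nothing in Lemmas~\ref{excess}--\ref{Gamma} (which are statements about the smooth graph $\Gamma$ and the real slices $\Delta_n$, $\Delta_m$) applies to $\mathcal C$ itself. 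The paper's proof avoids this entirely by using Behrend's Proposition~4.22 in the precise form $\nu_X(P)=I_{\{P\}}([\mathcal C],[\Delta_n])$ together with the deformation $\lim_{\eta\to0}[\Gamma_\eta]=[\mathcal C]$, where $\Gamma_\eta$ is the graph of $\tfrac1\eta\omega$: by deformation invariance of the local intersection number one may replace $[\mathcal C]$ by the smooth graph $[\Gamma]$, and then $\Gamma\cap F=\Gamma^{\mathbb T}$ is \emph{literally} the graph of $\omega^{\mathbb T}$, so the same identification applied downstairs gives $\nu_{X^{\mathbb T}}(P)=\int_{\Omega_{\C^m}}\omega_{\Gamma^{\mathbb T}}\cup\omega_{\Delta_m}$, and Lemmas~\ref{chiM} and \ref{Gamma} plus localization (dividing by $\chi_{S^1}(N_{\Omega_{\C^m}\subset\Omega_{\C^n}})=(-1)^{n-m}(r_{m+1}^2\cdots r_n^2)t^{2n-2m}$) produce exactly the sign $(-1)^{n-m}$. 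Without this pass-to-the-graph step (or an independent proof that the cone cycle is compatible with restriction to the fixed locus), your chain of equalities from Behrend's definition to the Thom-class computation does not close, so the proof as proposed is incomplete at its central step; your description of extracting $\nu_X(P)$ by ``pairing Thom classes against a small transverse disc'' also needs to be replaced by the precise intersection-with-$[\Delta_n]$ statement for the argument to interface with Lemma~\ref{chiM}.
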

\begin{proof}
We will prove the statements \' etale locally. 
As in Subsect.~3.2 of \cite{BF}, we may assume that
$X = Z(\omega) \subset \C^n$ and $P \in X$ is the origin of $\C^n$.
Here $n = \dim T_P X$ and $\omega$ is a $\mathbb T$-invariant 
almost closed $1$-form on $\C^n$. Let $z_1, \ldots, z_n$ be
the coordinates of $\C^n$, and $z_1, \ldots, z_n, {\w z}_1, \ldots, {\w z}_n$ 
be the coordinates of $\Omega_{\C^n} = \C^n \times \C^n$. 
Let $\omega = \sum_{i=1}^n f_i dz_i$
where the functions $f_i$ are holomorphic in $z_1, \ldots, z_n$. 

(i) For $t \in \mathbb T$ and $1 \le i \le n$, let $t(z_i) = t^{r_i} z_i$ 
where $r_i \in \Z$. Then the degrees of both ${\w z}_i$ and $f_i$ 
with respect to the $\mathbb T$-action are equal to $-r_i$. Assume that 
$r_i = 0$ for $1 \le i \le m$ and $r_i \ne 0$ for $m+1 \le i \le n$. 
Then $f_i(z_1, \ldots, z_m, 0, \ldots, 0) = 0$ for all $m+1 \le i \le n$. For $1 \le 
i \le m$, let $\w f_i(z_1, \ldots, z_m) = f_i(z_1, \ldots, z_m, 0, \ldots, 0)$.

Since $X = Z(\omega) = \{f_1=0, \ldots, f_n=0\}$ and $(\C^n)^{\mathbb T} 
= \{z_{m+1} = 0, \ldots, z_n=0\}$, 
\begin{eqnarray*}
   X^{\mathbb T} = X \cap (\C^n)^{\mathbb T}
&=&\{f_1=0, \ldots, \,\, f_n=0, \,\, z_{m+1} = 0, \ldots, \,\, z_n=0\} \\
&=&\{\w f_1 =0, \ldots, \,\, \w f_m =0, \,\, z_{m+1} = 0, 
    \ldots, \,\, z_n=0\}.
\end{eqnarray*}
So regarded as a subvariety of $\C^m$, $X^{\mathbb T} = Z(\omega^{\mathbb T})$ 
where $\omega^{\mathbb T} = \sum_{i=1}^m \w f_i dz_i$.

To show that $X^{\mathbb T}$ admits a symmetric obstruction theory,
it suffices to prove that
\begin{eqnarray}  \label{fixedset.1}
{\partial \w f_i \over \partial z_j} \equiv 
{\partial \w f_j \over \partial z_i}
\pmod {(\w f_1, \ldots, \w f_m)}
\end{eqnarray}
for $1 \le i, j \le m$. Since $\omega = \sum_{i=1}^n f_i dz_i$ is 
an almost closed $1$-form, we have
\begin{eqnarray*}  
{\partial f_{i} \over \partial z_j} \equiv 
{\partial f_{j} \over \partial z_i}
\pmod {(f_{1}, \ldots, f_{n})},
\end{eqnarray*}
for $1 \le i, j \le m$, i.e., there exist holomorphic functions 
$g_1, \ldots, g_n$ such that
\begin{eqnarray}  \label{fixedset.2}
{\partial f_i \over \partial z_j} = 
{\partial f_j \over \partial z_i} + \sum_{k=1}^n g_k f_k.
\end{eqnarray}
Setting $z_{m+1} = \ldots = z_n =0$, we obtain the relations
(\ref{fixedset.1}).

(ii) First of all, we claim that $\dim T_P(X^{\mathbb T}) = m$.
Indeed, let $m_P^\mathbb T$ be the maximal ideal of the local ring
of $X^{\mathbb T}$ at $P$. Since $X^{\mathbb T}$ is 
defined by $\w f_1 =0, \ldots, \w f_m =0$, 
\begin{eqnarray}    \label{fixedset.3}
\dim T_P(X^{\mathbb T})
= \dim \, {m_P^\mathbb T \over \big ( m_P^\mathbb T \big )^2} 
= m - \rank \big ( J^\mathbb T \big )
\end{eqnarray}
where $J^\mathbb T = \big [ \partial \w f_i/\partial z_j(P) 
\big ]_{1 \le i, j \le m}$ is the Jacobian matrix at 
$P = (0, \ldots, 0) \in X^{\mathbb T} \subset \C^m$. 
Similarly, since $X$ is defined by the equations $f_1=0, \ldots, f_n=0$,
\begin{eqnarray*} 
\dim T_P X = n - \rank (J)
\end{eqnarray*}
where $J = \big [ \partial f_{i}/\partial z_j(P) \big ]_{1 \le i, 
j \le n}$ is the Jacobian matrix at $P = (0, \ldots, 0) \in 
X \subset \C^n$. Note that $\dim T_P X = n$. So $\rank (J) = 0$,
and $\partial f_i/\partial z_j(P) = 0$ for all $1 \le i, j \le m$. 
Setting $z_{m+1} = \ldots = z_n = 0$, we conclude that 
$\partial \w f_i/\partial z_j(P) = 0$ for all $1 \le i, j \le m$.
Hence $J^\mathbb T$ is the zero matrix, and 
$\dim T_P(X^{\mathbb T}) = m$ in view of (\ref{fixedset.3}).

Next, let $\mathcal C \hookrightarrow \Omega_{\C^n}$ be the embedding 
of the normal cone $\mathcal C_{X/\C^n}$ into $\Omega_{\C^n}$ given by 
$\omega$. Let $\Delta_n$ be the subspace of $\Omega_{\C^n} = \C^n
\times \C^n$ consisting of all the points $(z_1, \ldots, z_n, 
\overline{z}_1, \ldots, \overline{z}_n)$. Orient $\Delta_n$ so that 
the map $\C^n \to \Delta_n$ is orientation-preserving. For $\eta \in 
\C -\{0\}$, let $\Gamma_\eta$ be the graph of the section $1/\eta \cdot 
\omega$ of $\Omega_{\C^n}$, i.e., $\Gamma_\eta$ is the subspace of 
$\Omega_{\C^n}$ defined by the equations $\eta {\w z}_i = f_i$, 
$1 \le i \le n$. Again, orient $\Gamma_\eta$ so that the map 
$\C^n \to \Gamma_\eta$ is orientation-preserving. From the proof of
Proposition~4.22 in \cite{Beh}, we see that $P$ is an isolated point 
of the intersection $\mathcal C \cap \Delta_n$, $\lim_{\eta \to 0}
[\Gamma_\eta] = [\mathcal C]$, and $\nu_X(P) = I_{\{P\}}([\mathcal C], 
[\Delta_n])$ which denotes the intersection number at $P$ of the cycles
$[\mathcal C]$ and $[\Delta_n]$. Therefore, $\nu_X(P) = I_{\{P\}}
([\Gamma_\eta], [\Delta_n])$ whenever $|\eta| \ne 0$ is sufficiently 
small. For simplicity, let $\eta = 1$ and $\Gamma = \Gamma_1$. 
Then 
\begin{eqnarray*}  
\nu_X(P) = I_{\{P\}}([\Gamma], [\Delta_n]).
\end{eqnarray*}
Computing this in the equivariant cohomology $H^*_{S^1}(\Omega_{\C^n})$
where the $S^1$-action on $\Omega_{\C^n}$ is the one induced from the 
$\mathbb T$-action on $\Omega_{\C^n}$, we obtain
\begin{eqnarray}  \label{fixedset.4}
\nu_X(P) = \int_{\Omega_{\C^n}} \omega_\Gamma \cup \omega_{\Delta_n}
\end{eqnarray}
where $\omega_\Gamma \cup \omega_{\Delta_n} \in H^*_{S^1}(\Omega_{\C^n})$. 
Similarly, regarding $\Omega_{\C^m} \subset \Omega_{\C^n}$ and
letting $\Gamma^{\mathbb T} = \Gamma \cap \Omega_{\C^m}$ be defined 
by the equations ${\w z}_i = \w f_i$, $1 \le i \le m$, we have
\begin{eqnarray}  \label{fixedset.5}
\nu_{X^{\mathbb T}}(P) = \int_{\Omega_{\C^m}} \omega_{\Gamma^{\mathbb T}} 
\cup \omega_{\Delta_m}
\end{eqnarray}

Let $i: F := \Omega_{\C^m} \hookrightarrow 
\Omega_{\C^n}$ be the inclusion map. Using the localization theorem and 
computing in the localized equivariant cohomology, we have
\begin{eqnarray*}  
   \omega_\Gamma \cup \omega_{\Delta_n}
= i_* \left ( {i^*(\omega_\Gamma \cup \omega_{\Delta_n}) \over 
     \chi_{S^1}(N_{\Omega_{\C^m} \subset \Omega_{\C^n}})} \right )   
= i_* \left ( {i^*\omega_\Gamma \cup i^*\omega_{\Delta_n} \over 
     (-1)^{n-m}(r_{m+1}^2 \cdots r_n^2)t^{2n-2m}} \right ).
\end{eqnarray*}
Therefore, we conclude from Lemma~\ref{chiM} and Lemma~\ref{Gamma} that
\begin{eqnarray*}  
\omega_\Gamma \cup \omega_{\Delta_n} = (-1)^{n-m} \cdot 
i_*(\omega_{\Gamma^{\mathbb T}} \cup \omega_{\Delta_m}).    
\end{eqnarray*}
In view of (\ref{fixedset.4}) and (\ref{fixedset.5}), we obtain 
$\nu_X(P) = (-1)^{n-m} \cdot \nu_{X^{\mathbb T}}(P)$.
\end{proof}

\section{\bf Euler characteristics of Grothendieck Quot-schemes}
\label{sect_Euler}

In this section, we will compute the Euler characteristics of 
the Quot scheme $\Quot^m_{\mathcal O^{\oplus r}_Y}$ where $Y$ 
is a smooth projective variety. The results will be used in the 
next section for the computation of Behrend's $\nu$-function. 
\subsection{\bf Virtual Hodge polynomials and Euler characteristics}
\label{subsect_Virtual}
$\,$

First of all, let $Y$ be a reduced complex scheme 
(not necessarily projective, irreducible or smooth). 
Mixed Hodge structures are defined on the cohomology 
$H_c^k(Y, \mathbb Q)$ with compact support 
(see \cite{Del, DK}). The mixed Hodge structures coincide 
with the classical one if $Y$ is projective and smooth. 
For each pair of integers $(m, n)$, 
define the virtual Hodge number
$$e^{m, n}(Y) = \sum_k (-1)^k h^{m, n}(H_c^k(Y, \mathbb Q)).$$ 
Then the {\it virtual Hodge polynomial} of $Y$ is 
defined to be
\begin{eqnarray}   \label{e_red}
e(Y; s, t) = \sum_{m, n} e^{m, n}(Y) s^m t^n.
\end{eqnarray}

Next, for an arbitrary complex scheme $Y$, we put 
\begin{eqnarray}   \label{e}
e(Y; s, t) = e(Y_{\text{red}}; s, t)
\end{eqnarray}
following \cite{Che}. By (\ref{e}) and the results 
in \cite{DK, Ful, Che} for reduced complex schemes, we see that 
virtual Hodge polynomials satisfy the following properties:

\begin{enumerate}
\item[(i)] When $Y$ is projective and smooth, $e(Y; s, t)$ is 
the usual Hodge polynomial of $Y$. 
For a general complex scheme $Y$, we have
\begin{eqnarray}   \label{hodge_i}
e(Y; 1, 1)= \chi(Y).
\end{eqnarray}

\item[(ii)] If $\displaystyle{Y = \coprod_{i=1}^n Y_i}$ is 
a finite disjoint union of locally closed subsets, then
\begin{eqnarray}   \label{hodge_ii}
e(Y; s, t) = \sum_{i=1}^n e(Y_i; s, t).
\end{eqnarray}

\item[(iii)] If $f: Y \to Y'$ is a Zariski-locally trivial bundle 
with fiber $F$, then
\begin{eqnarray}   \label{hodge_iii}
e(Y; s, t) = e(Y'; s, t) \cdot e(F; s, t).
\end{eqnarray}

\item[(iv)] If $f: Y \to Y'$ is a bijective morphism, then 
\begin{eqnarray}   \label{hodge_iv}
e(Y; s, t) = e(Y'; s, t).
\end{eqnarray}
\end{enumerate}

Let $\mathbb T = \C^*$. By the Theorem 4.1 in \cite{LY}, 
if $Y$ admits a $\mathbb T$-action, then 
\begin{eqnarray}   \label{C*}
\chi(Y) = \chi \big (Y^{\mathbb T} \big ).
\end{eqnarray}

\subsection{\bf Euler characteristics of Grothendieck Quot-schemes}
\label{subsect_Euler}
$\,$

Let $Y$ be a projective scheme over a base Noetherian scheme $B$,
and let $\mathcal V$ be a sheaf on $Y$ flat over $B$. 
Let $\Quot^m_{\mathcal V/Y/B}$ be the (relative) Grothendieck Quot-scheme
parametrizing all the surjections $\mathcal V|_{Y_b} \to Q \to 0$,
modulo automorphisms of $Q$, with $b \in B$ such that 
the quotients $Q$ are torsion sheaves supported at finitely many
points and $h^0(Y_b, Q) = m$. When $B = {\rm Spec}(\C)$, we put
$\Quot^m_{\mathcal V} = \Quot^m_{\mathcal V/Y/{\rm Spec}(\C)}$.
An element in $\Quot^m_{\mathcal V}$ can also be regarded as 
a subsheaf $E \subset \mathcal V$ such that the quotient $\mathcal V/E$ is 
supported at finitely many points with $h^0(Y, \mathcal V/E) = m$.

By the Lemma~5.2~(ii) in \cite{LQ1}, if $\mathcal V$ is a locally free
rank-$r$ sheaf on $Y$, then
\begin{eqnarray}   \label{e_lf}
e \big ( \Quot^m_{\mathcal V/Y/B}; s, t \big )
= e \big ( \Quot^m_{\mathcal O_Y^{\oplus r}/Y/B}; s, t \big ).
\end{eqnarray}

In the rest of this section, let $Y$ be smooth. 
For a fixed point $y \in Y$, let
\begin{eqnarray*}   \label{punc_Q}
\Quot^m_{\mathcal O^{\oplus r}}(Y, y) \subset 
\Quot^m_{\mathcal O^{\oplus r}_Y}
\end{eqnarray*}
be the {\it punctual} Quot-scheme consisting of all the surjections 
$\mathcal O^{\oplus r}_Y \to Q \to 0$ such that
$\Supp(Q) = \{y\}$ and $h^0(Y, Q) = m$. Let $n = \dim Y$. Then,
\begin{eqnarray}   \label{punc_Q_C}
\Quot^m_{\mathcal O^{\oplus r}}(Y, y) \cong
\Quot^m_{\mathcal O^{\oplus r}}(\C^n, O) 
\end{eqnarray}
where $O$ denotes the origin in $\C^n$.
Also, when $r = 1$, $\Quot^m_{\mathcal O_Y}$ is the Hilbert scheme 
$\Hilb^m(Y)$, and $\Quot^m_{\mathcal O}(Y, y)$ is
the punctual Hilbert scheme $\Hilb^m(Y, y)$.

\begin{lemma}   \label{Q_pQ}
Let $Y$ be smooth, and let $O$ be the origin in $\C^n$. Then,
\begin{eqnarray}   \label{chi_Q}
\sum_{m=0}^{+\infty} \chi(\Quot^m_{\mathcal O^{\oplus r}_Y}) q^m 
= \left ( \sum_{m=0}^{+\infty} \chi \big 
  (\Quot^m_{\mathcal O^{\oplus r}}(\C^n, O) \big ) q^m \right )^{\chi(Y)}.
\end{eqnarray}
\end{lemma}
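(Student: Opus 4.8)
The plan is to compute both sides via the theory of virtual Hodge polynomials (equivalently, motivic/Euler-characteristic stratifications) and reduce everything to a single punctual model. First I would observe that $\Quot^m_{\mathcal O^{\oplus r}_Y}$ is stratified according to the ``support type'' of the quotient sheaf $Q$: to a surjection $\mathcal O^{\oplus r}_Y \to Q \to 0$ with $\Supp(Q) = \{y_1, \dots, y_k\}$ (distinct points) and lengths $m_1, \dots, m_k$ at these points, $Q$ decomposes as a direct sum $\bigoplus_j Q_j$ with $Q_j$ supported at $y_j$, and the surjection is determined by its restrictions near each $y_j$. This realizes a locally closed stratum of $\Quot^m_{\mathcal O^{\oplus r}_Y}$ indexed by a partition-type datum as (roughly) a bundle over a configuration space of points on $Y$ with fiber a product of punctual Quot-schemes. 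Using (\ref{punc_Q_C}) to identify the punctual factors with $\Quot^{m_j}_{\mathcal O^{\oplus r}}(\C^n, O)$, and using the multiplicativity (\ref{hodge_iii}) and additivity (\ref{hodge_ii}) of virtual Hodge polynomials together with the bijective-morphism invariance (\ref{hodge_iv}) to handle the passage between the actual scheme structure and the stratified model, this is exactly the combinatorial setup of a power-structure / exponential formula.

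Concretely, I would set up the generating function with coefficients in virtual Hodge polynomials:
\begin{eqnarray*}
\sum_{m=0}^{+\infty} e\big(\Quot^m_{\mathcal O^{\oplus r}_Y}; s, t\big)\, q^m
= \left( \sum_{m=0}^{+\infty} e\big(\Quot^m_{\mathcal O^{\oplus r}}(\C^n, O); s, t\big)\, q^m \right)^{e(Y; s, t)},
\end{eqnarray*}
which is the standard statement that forming symmetric-type configurations over $Y$ raises the ``local'' generating series to the power $e(Y;s,t)$. This identity at the level of virtual Hodge polynomials is essentially Lemma~5.2 of \cite{LQ1} (or its proof method); in fact (\ref{e_lf}) already records the locally-free normalization, and the punctual reduction is the same argument applied fiberwise over a configuration space. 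Having this, I would specialize $s = t = 1$ and invoke (\ref{hodge_i}), i.e. $e(Y;1,1) = \chi(Y)$ and $e(\Quot^m_{\mathcal O^{\oplus r}}(\C^n,O); 1,1) = \chi(\Quot^m_{\mathcal O^{\oplus r}}(\C^n,O))$, to obtain (\ref{chi_Q}) immediately.

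The main obstacle — and the step requiring genuine care rather than a citation — is justifying that the stratification of $\Quot^m_{\mathcal O^{\oplus r}_Y}$ by support type is, up to bijective morphism (hence up to virtual Hodge polynomial, by (\ref{hodge_iv})), a Zariski-locally trivial fibration over the configuration space of the underlying points, with fiber the product of punctual Quot-schemes. The subtlety is that the relevant ``configuration space'' is the space of effective $0$-cycles of the appropriate colengths, which is a quotient by a symmetric group action permuting points of equal length; one must argue that this quotient is handled correctly by the power-structure formalism (the exponent $\chi(Y)$ rather than something involving symmetric products appears precisely because $\chi(\Sym^k Y) $ has the right generating function in $\chi(Y)$). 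The cleanest route is to quote the power-structure machinery (as in \cite{LQ1}), where the reduction to (\ref{punc_Q_C}) via the isomorphism $\Quot^m_{\mathcal O^{\oplus r}}(Y, y) \cong \Quot^m_{\mathcal O^{\oplus r}}(\C^n, O)$ for $Y$ smooth $n$-dimensional is exactly the hypothesis that makes the local contribution independent of the chosen point. I would therefore structure the proof as: (1) recall (\ref{e_lf}) and (\ref{punc_Q_C}); (2) cite/adapt the stratification-and-power-structure argument from \cite{LQ1} to get the virtual-Hodge-polynomial identity displayed above; (3) set $s=t=1$ using (\ref{hodge_i}) to conclude (\ref{chi_Q}).
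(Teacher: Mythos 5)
Your proposal is correct and takes essentially the same route as the paper: the paper likewise reduces to the punctual Quot-scheme and defers the stratification/power-structure argument to prior work (Sect.~6 of \cite{LQ3}), writing your identity with exponent $e(Y;s,t)$ in the equivalent plethystic-exponential form (\ref{e_Q}), and then specializes $s=t=1$ via (\ref{hodge_i}) to get (\ref{chi_Q}).
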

\begin{proof}
There exist unique rational numbers $Q_{n, r; k, \ell, m}$ such that
\begin{eqnarray}   \label{power_hil1}
\sum_{m=0}^{+\infty} e \big (\Quot^m_{\mathcal O^{\oplus r}}
  (\C^n, O); s, t \big ) q^m
= \prod_{k=1}^{+\infty} \,\, \prod_{\ell,m = 0}^{+\infty} \left ( 
  {1 \over {1 - q^k s^\ell t^m}} \right )^{Q_{n, r; k, \ell, m}}
\end{eqnarray}
as elements in $\mathbb Q[s, t][[q]]$. Define $\mathfrak Q_{n, r}(q,s,t)
\in \mathbb Q[s, t][[q]]$ to be the power series:
\begin{eqnarray}   \label{power_hil2}
\mathfrak Q_{n, r}(q, s,t)
= \sum_{k=1}^{+\infty} \left ( \sum_{\ell, m = 0}^{+\infty}
Q_{n, r; k, \ell, m} s^\ell t^m \right ) q^k.
\end{eqnarray}
Using the arguments similar to those in Sect.~6 of \cite{LQ3},
we conclude that
\begin{eqnarray}   \label{e_Q}
\sum_{m=0}^{+\infty} e \big (
  \Quot^m_{\mathcal O^{\oplus r}_Y}; s, t \big ) q^m 
= \text{\rm exp} \left ( \sum_{m=1}^{+\infty} {1 \over m}
e(Y; s^m, t^m) \mathfrak Q_{n, r}(q^m, s^m, t^m) \right ).
\end{eqnarray}
In particular, setting $s=t=1$ and using (\ref{hodge_i}),
we obtain (\ref{chi_Q}).
\end{proof}

Next, we use a torus action to compute $\chi \big (
\Quot^m_{\mathcal O^{\oplus r}}(\C^n, O) \big )$. Let 
\begin{eqnarray}   \label{Tnr}  
{\bf T}_0 = (\C^*)^{n+r}. 
\end{eqnarray}
Then ${\bf T}_0$ acts on $\Quot^m_{\mathcal O^{\oplus r}_{\C^n}}$ 
as follows. On one hand, the $n$-dimensional torus ${\bf T}_1 
:= (\C^*)^n$ acts on $\C^n$. This induces a ${\bf T}_1$-action 
on $\Quot^m_{\mathcal O^{\oplus r}_{\C^n}}$. On the other hand,
${\bf T}_2 := (\C^*)^r$ acts on $\mathcal O^{\oplus r}_{\C^n}$ via 
$(\C^*)^r \subset {\rm Aut}\big ( \mathcal O^{\oplus r}_{\C^n} \big )$.
This induces a ${\bf T}_2$-action 
on $\Quot^m_{\mathcal O^{\oplus r}_{\C^n}}$.
So ${\bf T}_0 = {\bf T}_1 \times {\bf T}_2$ acts on 
$\Quot^m_{\mathcal O^{\oplus r}_{\C^n}}$.
Note that ${\bf T}_0$ preserves 
$\Quot^m_{\mathcal O^{\oplus r}}(\C^n, O)$. Therefore,
we obtain a ${\bf T}_0$-action on 
$\Quot^m_{\mathcal O^{\oplus r}}(\C^n, O)$. More precisely, let
\begin{eqnarray*}  
R = \C [z_1, \ldots, z_n]
\end{eqnarray*}   
be the affine coordinate ring of $\C^n$. 
Denote the elements in ${\bf T}_1, {\bf T}_2, {\bf T}_0$ by
\begin{eqnarray}   \label{elt}  
{\bf t}_1 = (t_{11}, \ldots, t_{1n}), \,\,
{\bf t}_2 = (t_{21}, \ldots, t_{2r}), \,\,
{\bf t}_0 = ({\bf t}_1, {\bf t}_2)
\end{eqnarray}
respectively. Then, the element ${\bf t}_1 \in {\bf T}_1$ 
acts on the ring $R$ by 
\begin{eqnarray*}   
{\bf t}_1(z_1^{i_1} \cdots z_n^{i_n}) = 
(t_{11}z_1)^{i_1} \cdots (t_{1n}z_n)^{i_n},
\end{eqnarray*} 
and the element ${\bf t}_0 = ({\bf t}_1, {\bf t}_2) \in {\bf T}_0$
acts on the module $R^r$ by 
\begin{eqnarray}     \label{T_act}
{\bf t}_0(f_1, \ldots, f_r) = \big (t_{21} \cdot {\bf t}_1(f_1), 
\,\, \ldots, \,\, t_{2r} \cdot {\bf t}_1(f_r) \big ).
\end{eqnarray} 

\begin{lemma}   \label{fixed_pt_str}
Let $E \in \Quot^m_{\mathcal O^{\oplus r}}(\C^n, O)$.
Let $R^{(i)}$ be the $i$-th component of $R^r$, 
and let $I_{Z_i} = E \cap R^{(i)} \subset R^{(i)} \cong R$.
Then, $E \in \Quot^m_{\mathcal O^{\oplus r}}(\C^n, O)^{{\bf T}_0}$
if and only if
\begin{eqnarray}   \label{fixed_pt_str.1}
E = I_{Z_1} \oplus \cdots \oplus I_{Z_r},
\end{eqnarray}
$Z_i \in \Hilb^{\ell(Z_i)}(\C^n, O)^{{\bf T}_1}$ for 
all $1 \le i \le r$, and $\ell(Z_1) + \ldots + \ell(Z_r) = m$.
\end{lemma}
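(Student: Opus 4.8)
The plan is to prove both implications directly by analyzing the ${\bf T}_0$-weight decomposition of a fixed submodule $E \subset R^r$. First I would establish the easy direction: suppose $E = I_{Z_1} \oplus \cdots \oplus I_{Z_r}$ with each $Z_i$ a ${\bf T}_1$-fixed, $O$-supported $0$-dimensional subscheme of $\C^n$ of length $\ell(Z_i)$, and $\sum_i \ell(Z_i) = m$. Since the ${\bf T}_2 = (\C^*)^r$-action in (\ref{T_act}) scales the $i$-th component $R^{(i)}$ by $t_{2i}$, any direct sum $\bigoplus_i I_{Z_i}$ with $I_{Z_i} \subset R^{(i)}$ is automatically ${\bf T}_2$-invariant; and it is ${\bf T}_1$-invariant precisely because each $I_{Z_i}$ is a ${\bf T}_1$-invariant (monomial) ideal, i.e.\ $Z_i$ is ${\bf T}_1$-fixed. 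The length condition $h^0(\C^n, R^r/E) = \sum_i h^0(\C^n, R/I_{Z_i}) = \sum_i \ell(Z_i) = m$ places $E$ in $\Quot^m_{\mathcal O^{\oplus r}}(\C^n, O)$. Hence $E \in \Quot^m_{\mathcal O^{\oplus r}}(\C^n, O)^{{\bf T}_0}$.

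For the converse, suppose $E$ is ${\bf T}_0$-fixed. The key step is to exploit the ${\bf T}_2$-action to force $E$ to split as a direct sum along the $r$ components. Writing $R^r = \bigoplus_{i=1}^r R^{(i)}$, the subtorus ${\bf T}_2$ acts on $R^{(i)}$ with character $t_{2i}$, and these characters are distinct; so the ${\bf T}_2$-isotypic decomposition of $R^r$ is exactly this direct sum. A ${\bf T}_2$-invariant submodule $E$ is therefore the direct sum of its intersections with the isotypic pieces: $E = \bigoplus_{i=1}^r (E \cap R^{(i)})$. By definition $E \cap R^{(i)} = I_{Z_i}$ under the identification $R^{(i)} \cong R$, and since $E$ is also ${\bf T}_1$-invariant (the ${\bf T}_1$-action preserves each $R^{(i)}$), each $I_{Z_i}$ is a ${\bf T}_1$-invariant ideal of $R$, i.e.\ a monomial ideal, so $Z_i \in \Hilb^{\ell(Z_i)}(\C^n, O)^{{\bf T}_1}$. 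Finally $E$ is an element of $\Quot^m$ with $O$-supported quotient, so $R^r/E = \bigoplus_i R/I_{Z_i}$ is $0$-dimensional supported at $O$ with total length $m$, giving $\sum_i \ell(Z_i) = m$ and forcing each $Z_i$ to be $O$-supported as well. This yields (\ref{fixed_pt_str.1}).

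The main technical point to be careful about — and the only place where anything beyond bookkeeping is needed — is the claim that a ${\bf T}_2$-invariant $R$-submodule $E$ of $R^r = \bigoplus_i R^{(i)}$ must respect the direct sum decomposition. This is the statement that the category of ${\bf T}_2$-equivariant $R$-modules (where ${\bf T}_2$ acts trivially on $R$ and by the given distinct characters on the summands) decomposes according to ${\bf T}_2$-weight; concretely, any element $e \in E$ written as $e = (f_1, \ldots, f_r)$ has each of its projections $(0,\ldots,f_i,\ldots,0)$ lying in $E$, because these projections are obtained from $e$ by applying the projector $\frac{1}{|G|}\sum_{g \in G} \chi_i(g)^{-1} g$ for a suitable finite subgroup $G \subset {\bf T}_2$ large enough to separate the $r$ characters, and $E$ is invariant under every element of ${\bf T}_2$ hence under this averaging operator; since $E$ is a priori only a subset closed under the $R$-action and under ${\bf T}_2$, one should note that it is in particular a $\C$-subspace (being an $R$-submodule), so the averaging argument is legitimate. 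Everything else is immediate from the definitions of the actions in (\ref{T_act}) and the identification (\ref{punc_Q_C}), together with the standard fact that ${\bf T}_1$-fixed ideals in $\C[z_1,\ldots,z_n]$ are exactly the monomial ideals.
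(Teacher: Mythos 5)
Your proof is correct and follows essentially the same route as the paper: both directions are handled by checking invariance directly, and the key step in the converse is the ${\bf T}_2$-weight splitting $E=\bigoplus_i (E\cap R^{(i)})$, which the paper states in one line ("$E$ is the span of elements of the form $f{\bf e}_i$") and which your finite-subgroup averaging argument merely justifies more explicitly. No gaps; the extra care about distinct characters and $E$ being a $\C$-subspace is a legitimate elaboration of the paper's argument rather than a different method.
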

\begin{proof}
It is clear that if $E$ is of the form (\ref{fixed_pt_str.1}), 
then $E \in \Quot^m_{\mathcal O^{\oplus r}}(\C^n, O)^{{\bf T}_0}$.
Conversely, let $E \in \Quot^m_{\mathcal O^{\oplus r}}(\C^n, O)^{{\bf T}_0}$.
Since both $E$ and $R^{(i)}$ are ${\bf T}_0$-invariant, $I_{Z_i}$ is 
${\bf T}_0$-invariant. So $I_{Z_i}$ is ${\bf T}_1$-invariant, 
and $Z_i \in \Hilb^{\ell(Z_i)}(\C^n, O)^{{\bf T}_1}$
which consists of finitely many points.
Let ${\bf 1}$ denote the identity element in ${\bf T}_1$. 
Since $E$ is invariant by $\{ {\bf 1} \} \times {\bf T}_2 
\subset {\bf T}_0$, we see that $E$ is the span of elements of 
the form $f {\bf e}_i \in E$
where $f \in R$ and $\{ {\bf e}_1, \ldots, {\bf e}_r \}$ 
is the standard basis of $\C^r$. In particular, 
$$
E = I_{Z_1} + \cdots + I_{Z_r}.
$$
So $E = I_{Z_1} \oplus \cdots \oplus I_{Z_r}$.
Finally, $\ell(Z_1) + \ldots + \ell(Z_r) = m$
since $\dim R^r/E = m$.
\end{proof}

\begin{definition}  \label{r-part}
Let $n \ge 2$ and $m \ge 0$. {\it An $n$-dimensional 
partition} of $m$ is an array 
$(m_{i_1, \ldots, i_{n-1}})_{i_1, \ldots, i_{n-1}}$
of nonnegative integers $m_{i_1, \ldots, i_{n-1}}$ 
indexed by the tuples 
\begin{eqnarray}   \label{r-part.2}
(i_1, \ldots, i_{n-1}) \in (\mathbb Z_{\ge 0})^{n-1}
\end{eqnarray}
such that $m_{i_1, \ldots, i_{n-1}} \ge m_{j_1, \ldots, j_{n-1}}$
whenever $i_1 \le j_1, \ldots, i_{n-1} \le j_{n-1}$, and that
\begin{eqnarray}   \label{r-part.3}
\sum_{i_1, \ldots, i_{n-1}} m_{i_1, \ldots, i_{n-1}} = m.
\end{eqnarray}
\end{definition}

\begin{theorem}  \label{thm_Q}
Let $Y$ be a smooth projective variety of dimension $n$. 
Then,
\begin{eqnarray}   \label{thm_Q.1}
\sum_{m=0}^{+\infty} \chi(\Quot^m_{\mathcal O^{\oplus r}_Y}) q^m 
= \left ( \sum_{m=0}^{+\infty} P_n(m) q^m
  \right )^{r \cdot \chi(Y)}
\end{eqnarray}
where $P_n(m)$ denotes the number of $n$-dimensional partitions of $m$.
\end{theorem}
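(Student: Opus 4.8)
The plan is to reduce the identity to a count of torus-fixed points, using Lemma~\ref{Q_pQ}, Lemma~\ref{fixed_pt_str}, and the identity (\ref{C*}). By Lemma~\ref{Q_pQ} it suffices to prove
\begin{eqnarray*}
\sum_{m=0}^{+\infty} \chi\big(\Quot^m_{\mathcal O^{\oplus r}}(\C^n, O)\big) q^m
= \left( \sum_{m=0}^{+\infty} P_n(m) q^m \right)^r,
\end{eqnarray*}
since substituting this into Lemma~\ref{Q_pQ} produces (\ref{thm_Q.1}). (The constant term of each series here is $1$, so raising them to integral powers is legitimate in $\mathbb Q[[q]]$.)

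To prove this, I would use the ${\bf T}_0 = (\C^*)^{n+r}$-action on $\Quot^m_{\mathcal O^{\oplus r}}(\C^n, O)$ described before Lemma~\ref{fixed_pt_str}. Applying (\ref{C*}) successively to the $n+r$ factors of ${\bf T}_0$ — at each step the locus fixed by the factors used so far still carries the action of the remaining factors — yields
\begin{eqnarray*}
\chi\big(\Quot^m_{\mathcal O^{\oplus r}}(\C^n, O)\big)
= \chi\Big( \big(\Quot^m_{\mathcal O^{\oplus r}}(\C^n, O)\big)^{{\bf T}_0} \Big).
\end{eqnarray*}
By Lemma~\ref{fixed_pt_str} the ${\bf T}_0$-fixed locus is, set-theoretically, the finite collection of points $E = I_{Z_1} \oplus \cdots \oplus I_{Z_r}$ with $Z_i \in \Hilb^{\ell(Z_i)}(\C^n, O)^{{\bf T}_1}$ and $\ell(Z_1) + \cdots + \ell(Z_r) = m$, where ${\bf T}_1 = (\C^*)^n$; hence $\chi\big(\Quot^m_{\mathcal O^{\oplus r}}(\C^n, O)\big)$ equals the number of such points, namely $\sum_{\ell_1 + \cdots + \ell_r = m} \prod_{i=1}^{r} \#\Hilb^{\ell_i}(\C^n, O)^{{\bf T}_1}$.

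It remains to identify $\#\Hilb^{\ell}(\C^n, O)^{{\bf T}_1}$. Since ${\bf T}_1$ scales the coordinates of $R = \C[z_1, \ldots, z_n]$, a ${\bf T}_1$-fixed point of $\Hilb^{\ell}(\C^n, O)$ is a monomial ideal $I \subset R$ with $\dim_{\C} R/I = \ell$, and any such ideal is $(z_1, \ldots, z_n)$-primary and so does lie in the punctual Hilbert scheme. Attach to such an $I$ the array whose entry indexed by $(i_1, \ldots, i_{n-1}) \in (\mathbb Z_{\ge 0})^{n-1}$ counts the exponents $i_n \ge 0$ for which $z_1^{i_1} \cdots z_n^{i_n}$ is a standard monomial of $R/I$: the down-closedness of the set of standard monomials is exactly the monotonicity axiom of Definition~\ref{r-part}, and the colength condition $\dim_{\C} R/I = \ell$ is (\ref{r-part.3}), so this is a bijection with the set of $n$-dimensional partitions of $\ell$. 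Therefore
\begin{eqnarray*}
\chi\big(\Quot^m_{\mathcal O^{\oplus r}}(\C^n, O)\big)
= \sum_{\ell_1 + \cdots + \ell_r = m} \prod_{i=1}^{r} P_n(\ell_i),
\end{eqnarray*}
which is the coefficient of $q^m$ in $\big( \sum_{\ell \ge 0} P_n(\ell) q^\ell \big)^r$; combining with Lemma~\ref{Q_pQ} gives (\ref{thm_Q.1}).

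The only point that needs real care is this last bijection — checking that the combinatorics of finite down-closed subsets of $(\mathbb Z_{\ge 0})^n$ matches Definition~\ref{r-part} precisely — together with the (routine but needed) remark that the ${\bf T}_0$-fixed locus is a finite set of points, so that its Euler characteristic is its cardinality. The case $n = 1$, excluded from Definition~\ref{r-part}, is immediate since $\Hilb^{\ell}(\C, O)$ is a single point for every $\ell \ge 0$, matching $P_1(\ell) = 1$.
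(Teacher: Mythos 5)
Your proof is correct and follows essentially the same route as the paper: reduce via Lemma~\ref{Q_pQ}, then use the ${\bf T}_0$-action together with (\ref{C*}) and Lemma~\ref{fixed_pt_str} to reduce to a count of fixed points. The only divergence is at the last step, where the paper passes back to $\chi\big(\Hilb^m(\C^n,O)\big)$ and cites Cheah's Proposition~5.1 for (\ref{5.1}), while you instead identify the ${\bf T}_1$-fixed points directly with monomial ideals and hence with $n$-dimensional partitions --- a correct, self-contained substitute for that citation.
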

\begin{proof}
We conclude from formula (\ref{C*}) and Lemma~\ref{fixed_pt_str} that
\begin{eqnarray*} 
   \sum_{m=0}^{+\infty} \chi \big 
   (\Quot^m_{\mathcal O^{\oplus r}}(\C^n, O) \big ) q^m   
&=&\sum_{m=0}^{+\infty} \chi \big 
   (\Quot^m_{\mathcal O^{\oplus r}}(\C^n, O)^{{\bf T}_0} \big ) q^m \\
&=&\left ( \sum_{m=0}^{+\infty} \chi \big 
   (\Hilb^m(\C^n, O)^{{\bf T}_1} \big ) q^m \right )^r     \\
&=&\left ( \sum_{m=0}^{+\infty} \chi \big 
   (\Hilb^m(\C^n, O) \big ) q^m \right )^r.
\end{eqnarray*}
The Euler characteristic $\chi \big (\Hilb^m(\C^n, O) \big )$ is given 
by the formula:
\begin{eqnarray}   \label{5.1}
  \sum_{m=0}^{+\infty} \chi \big (\Hilb^m(\C^n, O) \big ) q^m
= \sum_{m=0}^{+\infty} P_n(m) q^m
\end{eqnarray}
(see Proposition~5.1 in \cite{Che}). 
Now (\ref{thm_Q.1}) follows from Lemma~\ref{Q_pQ}.
\end{proof}

The generating series for $P_3(m), m \ge 0$ is given by 
the McMahon function:
\begin{eqnarray}   \label{n=3}
\sum_{m=0}^{+\infty} P_3(m) q^m = M(q) 
:= \prod_{m=1}^{+\infty} {1 \over (1-q^m)^m}.
\end{eqnarray}

\begin{corollary}  \label{Q_3fold}
Let $Y$ be a smooth projective complex $3$-fold. Then,
\begin{eqnarray}   \label{Q_3fold.1}
\sum_{m=0}^{+\infty} \chi(\Quot^m_{\mathcal O^{\oplus r}_Y}) q^m 
= M(q)^{r \cdot \chi(Y)}.
\end{eqnarray}
\end{corollary}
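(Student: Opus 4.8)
The plan is to deduce Corollary~\ref{Q_3fold} directly from Theorem~\ref{thm_Q} by specializing the dimension to $n=3$. The only additional input needed is the identification of the generating series of the numbers $P_3(m)$ of $3$-dimensional partitions of $m$ with the McMahon function $M(q)$, and this is recorded in equation~(\ref{n=3}).

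Concretely, I would proceed as follows. First, apply Theorem~\ref{thm_Q} with $n=3$ to the smooth projective $3$-fold $Y$: this gives
\begin{eqnarray*}
\sum_{m=0}^{+\infty} \chi(\Quot^m_{\mathcal O^{\oplus r}_Y}) q^m
= \left ( \sum_{m=0}^{+\infty} P_3(m) q^m \right )^{r \cdot \chi(Y)}.
\end{eqnarray*}
Second, substitute the classical identity~(\ref{n=3}), namely $\sum_{m=0}^{+\infty} P_3(m) q^m = M(q) = \prod_{m=1}^{+\infty} (1-q^m)^{-m}$, into the base of the power on the right-hand side. This immediately yields
\begin{eqnarray*}
\sum_{m=0}^{+\infty} \chi(\Quot^m_{\mathcal O^{\oplus r}_Y}) q^m
= M(q)^{r \cdot \chi(Y)},
\end{eqnarray*}
which is precisely~(\ref{Q_3fold.1}).

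There is essentially no obstacle here: the corollary is a one-line specialization of the theorem combined with a standard combinatorial generating-function identity (MacMahon's formula for plane partitions, equivalently solid partitions in dimension three). The substantive work — the torus-localization reduction of $\chi(\Quot^m_{\mathcal O^{\oplus r}_Y})$ to a power of the punctual contribution via Lemma~\ref{fixed_pt_str} and formula~(\ref{C*}), together with the power-structure identity~(\ref{e_Q}) from Lemma~\ref{Q_pQ} — has already been carried out in the proof of Theorem~\ref{thm_Q}, and the identification of $\sum_m P_n(m)q^m$ with $\chi$ of punctual Hilbert schemes via~(\ref{5.1}) is quoted from \cite{Che}. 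Thus the proof of the corollary is simply: invoke Theorem~\ref{thm_Q} with $n=3$, then rewrite the base using~(\ref{n=3}).
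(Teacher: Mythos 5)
Your proposal is correct and matches the paper's own argument exactly: the paper proves Corollary~\ref{Q_3fold} precisely by invoking Theorem~\ref{thm_Q} with $n=3$ and substituting the MacMahon identity~(\ref{n=3}). Nothing further is needed.
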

\begin{proof}
Follows immediately  from Theorem~\ref{thm_Q} and (\ref{n=3}).
\end{proof}
\section{\bf Donaldson-Thomas invariants of certain Calabi-Yau $3$-folds}
\label{sect_Donaldson}

In this section, we compute the Donaldson-Thomas invariants for two types
of Calabi-Yau $3$-folds. The first type comes from \cite{DT, Tho} and is
studied in Subsect.~\ref{subsect_DonaldsonI}. The second type comes from 
\cite{LQ2} and is studied in Subsect.~\ref{subsect_DonaldsonII}.

\subsection{\bf Donaldson-Thomas invariants and weighted Euler 
characteristics}
\label{subsect_Donaldson}
$\,$

Let $L$ be an ample line bundle on a smooth
projective variety $Y$ of dimension $n$, and $V$ be a rank-$r$
torsion-free sheaf on $Y$. We say that $V$ is {\it (slope)
$L$-stable} if
\begin{eqnarray*}
\frac{c_1(F)\cdot c_1(L)^{n-1}}{\text{rank}(F)} \, < \,
\frac{c_1(V) \cdot c_1(L)^{n-1}}{r}
\end{eqnarray*}
for any proper subsheaf $F$ of $V$, and $V$ is {\it Gieseker
$L$-stable} if
\begin{eqnarray*}
\frac{\chi(F\otimes L^{\otimes k} )}{\text{rank}(F)} \, < \,
\frac{\chi(V\otimes L^{\otimes k})}{r}, \qquad k\gg 0
\end{eqnarray*}
for any proper subsheaf $F$ of $V$. Similarly, we define {\it
$L$-semistability} and {\it Gieseker $L$-semistability} by
replacing the above strict inequalities $<$ by inequalities $\le$.
For a class $c$ in the Chow group $A^*(Y)$, let $\mathfrak M_L(c)$
be the moduli space of $L$-stable rank-$2$ bundles with total
Chern class $c$, and let $\overline{\mathfrak M}_L(c)$ be the
moduli space of Gieseker $L$-semistable rank-$2$ torsion-free
sheaves with total Chern class $c$.

Next, let $(Y, L)$ be a polarized smooth Calabi-Yau $3$-fold.
Assume that all the rank-$2$ torsion-free sheaves in 
$\overline{\mathfrak M}_L(c)$ are actually Gieseker $L$-stable. 
By the Definition~3.54 and Corollary~3.39 in \cite{Tho}, 
{\it the Donaldson-Thomas invariant} 
\begin{eqnarray}  \label{dt}
\lambda(L, c) \in \mathbb Z
\end{eqnarray}
(also known as {\it the homolomorphic Casson invariant}) can 
be defined via the moduli space $\overline{\mathfrak M}_L(c)$.
By the Proposition~1.26 in \cite{BF}, $\overline{\mathfrak M}_L(c)$
admits a symmetric obstruction theory. It follows from 
the Theorem~4.18 in \cite{Beh} that
\begin{eqnarray}  \label{dt-Beh}
\lambda(L, c) = \W \chi \big ( \overline{\mathfrak M}_L(c) \big ).
\end{eqnarray}
Note that if $\overline{\mathfrak M}_L(c)$ is further assumed to 
be smooth, then
\begin{eqnarray}  \label{dt_smooth}
\lambda(L, c) = (-1)^{\dim \, \overline{\mathfrak M}_L(c)} \cdot 
\chi \big ( \overline{\mathfrak M}_L(c) \big ).
\end{eqnarray}

\subsection{\bf Donaldson-Thomas invariants, I}
\label{subsect_DonaldsonI}
$\,$

Let $Q_0$ be a smooth quadric in $\Pee^5$. Identifying $Q_0$ with 
the Grassmaniann $G(2, 4)$, we obtain universal rank-$2$ bundles 
$B_1$ and $B_2$ sitting in the exact sequence:
\begin{eqnarray}  \label{def_ab}
0 \to (B_1)^* \to (\mathcal O_{Q_0})^{\oplus 4} \to B_2 \to 0.
\end{eqnarray}
The Chern classes of $B_1$ and $B_2$ are the same, and 
\begin{eqnarray}  \label{c12_ab}
c_1(B_i) = H|_{Q_0}, \qquad c_2(B_i) = P
\end{eqnarray}
where $H$ is a hyperplane in $\Pee^5$, 
and $P$ is a plane contained in $Q_0$.

Next, let $Y$ be a smooth quartic hypersurface in $Q_0$. Then $Y$ is 
a smooth Calabi-Yau $3$-fold with $H_1(Y, \Z) = 0$ and 
$\Pic(Y) \cong \Pic(\mathbb P^n)$. Let
\begin{eqnarray*}
E_{0, i} = B_i|_Y
\end{eqnarray*}
 for $i = 1, 2$, and let $L = \mathcal O_{\Pee^5}(1)|_Y$. Fix a point 
$y_0 \in Y$. For $m \in \Z$, define
\begin{eqnarray}  \label{cmTho}
{\bf c}_m = -m[y_0] + \big ( 1 + H|_Y + P|_Y \big ) \in A^*(Y).
\end{eqnarray}
By Theorem~3.55 in \cite{Tho}, the moduli space 
$\overline{\mathfrak M}_{L}({\bf c}_0)$ is smooth and
\begin{eqnarray}  \label{BiY}
\overline{\mathfrak M}_{L}({\bf c}_0)
= \big \{ E_{0, 1}, \,\, E_{0, 2} \big \}.
\end{eqnarray}
Moreover, both $E_{0, 1}$ and $E_{0, 2}$ are $L$-stable.
It follows that $\lambda(L, {\bf c}_0) = 2$.

\begin{lemma}    \label{lma_Tho}
Let $m \ge 0$. Then $\overline{\mathfrak M}_{L}({\bf c}_{2m})$ 
is isomorphic to $\displaystyle{\Quot^m_{E_{0, 1}} \coprod 
\Quot^m_{E_{0, 2}}}$.
\end{lemma}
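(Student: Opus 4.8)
The plan is to establish the isomorphism $\overline{\mathfrak M}_{L}({\bf c}_{2m}) \cong \Quot^m_{E_{0,1}} \coprod \Quot^m_{E_{0,2}}$ by analyzing the structure of a Gieseker $L$-semistable sheaf $E$ with total Chern class ${\bf c}_{2m}$. The first step is to understand the double dual: since $E$ is rank-$2$ torsion-free on the smooth $3$-fold $Y$, $E^{**}$ is reflexive, and the quotient $E^{**}/E$ is supported in codimension $\ge 2$. I would argue that $E^{**}$ is locally free (reflexive rank-$2$ sheaves on a smooth $3$-fold are locally free away from a finite set; one must rule out the isolated singular points by a stability/Chern-class count) and that $E^{**}$ is $L$-semistable with $c_1(E^{**}) = H|_Y$. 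A careful bookkeeping of Chern classes shows $c_2(E^{**}) = P|_Y$ and that $\ell(E^{**}/E) = m$ forces the index shift from ${\bf c}_{2m}$ to ${\bf c}_0$ to be even — this is where the parity statement "$m$ is even" from the introduction gets used implicitly. Hence $E^{**} \in \overline{\mathfrak M}_L({\bf c}_0) = \{E_{0,1}, E_{0,2}\}$ by \eqref{BiY}.

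Next, given that $E^{**} \cong E_{0,i}$ for exactly one $i \in \{1,2\}$ (the two bundles are non-isomorphic, so this is a genuine dichotomy and accounts for the disjoint union), the inclusion $E \hookrightarrow E^{**} = E_{0,i}$ exhibits $E$ as a subsheaf of $E_{0,i}$ whose quotient is a length-$m$ torsion sheaf, i.e., a point of $\Quot^m_{E_{0,i}}$. Conversely, any $E \subset E_{0,i}$ with finite-length quotient of length $m$ is torsion-free of rank $2$ with total Chern class ${\bf c}_{2m}$, and I would check it is Gieseker $L$-semistable: a destabilizing subsheaf of $E$ would also (after saturating) destabilize $E_{0,i}$, contradicting $L$-stability of $E_{0,i}$. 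This gives a bijection on closed points between $\overline{\mathfrak M}_L({\bf c}_{2m})$ and $\Quot^m_{E_{0,1}} \coprod \Quot^m_{E_{0,2}}$.

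To upgrade the bijection to an isomorphism of schemes, I would construct the map functorially: over $\overline{\mathfrak M}_L({\bf c}_{2m})$ with its (quasi-)universal family $\mathcal E$, relative reflexive hull gives a family whose double dual is the pullback of (a twist of) the universal bundle on $\overline{\mathfrak M}_L({\bf c}_0) \times Y = \{E_{0,1}, E_{0,2}\} \times Y$; since the base $\overline{\mathfrak M}_L({\bf c}_0)$ is two reduced points, the family splits into two pieces and $\mathcal E$ becomes a flat family of quotients of $E_{0,i}^{\boxtimes}$ parametrized subschemes, inducing a morphism to $\Quot^m_{E_{0,1}} \coprod \Quot^m_{E_{0,2}}$. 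The inverse morphism comes from the universal quotient on the Quot-scheme side. The two are mutually inverse because they are on closed points and both schemes are — well, here lies the subtlety.

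\textbf{Main obstacle.} The hard part will be the scheme-theoretic (rather than merely set-theoretic) statement: I must either show both sides are reduced so that a bijective morphism between them is an isomorphism, or match deformation theories directly. The cleanest route is to compare tangent-obstruction theories: the deformations of $E \in \overline{\mathfrak M}_L({\bf c}_{2m})$ are governed by $\Ext^1_Y(E,E)$, while the Quot-scheme tangent space at $[E \subset E_{0,i}]$ is $\Hom_Y(E, E_{0,i}/E)$, and one needs a natural identification (via the long exact sequences obtained from $0 \to E \to E_{0,i} \to E_{0,i}/E \to 0$, using $\Ext^\bullet(E_{0,i}, E_{0,i})$ being as small as possible because $E_{0,i}$ is rigid and stable, so $\Hom = \C$, $\Ext^1 = 0$). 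Verifying that these identifications are compatible with the constructed morphism — i.e., that it induces an isomorphism on all Zariski tangent spaces and is bijective, hence an isomorphism since the target Quot-scheme is (or can be shown to be, étale-locally, via \eqref{punc_Q_C}) of the expected local structure — is the technical heart. Alternatively, one invokes that a bijective morphism of schemes that is moreover an isomorphism on tangent spaces is an isomorphism onto its image when the target is reduced, and separately argues $\Quot^m_{E_{0,i}}$ is reduced, or simply cites that for the purposes of computing $\W\chi$ only the reduced structure matters; but for a clean statement of the lemma I would push through the tangent-space comparison.
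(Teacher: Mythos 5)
Your pointwise analysis coincides with the paper's: for $E \in \overline{\mathfrak M}_{L}({\bf c}_{2m})$ the double dual is one of the two rigid bundles (the paper simply quotes the argument of Theorem~3.55 of \cite{Tho}, which uses only $c_1$ and $c_2$, rather than re-deriving local freeness of $E^{**}$), the canonical sequence $0\to E\to E^{**}\to Q\to 0$ forces $c(Q)=1+2m[y_0]$, hence $E\in\Quot^m_{E_{0,i}}$; conversely the kernel of any length-$m$ quotient of $E_{0,i}$ is $L$-stable with total Chern class ${\bf c}_{2m}$ because $E_{0,i}$ is $L$-stable. Up to the set-theoretic bijection your proposal and the paper agree.

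The gap is in how you upgrade this to an isomorphism of schemes. The paper never compares tangent spaces and never needs reducedness: it observes that the kernels of the universal quotients form a flat family $\mathcal E_{m,1}\coprod\mathcal E_{m,2}$ of Gieseker $L$-stable sheaves of class ${\bf c}_{2m}$ over $\Quot^m_{E_{0,1}}\coprod\Quot^m_{E_{0,2}}$, and that any flat family of such sheaves over a base $T$ is pulled back along a unique morphism $T\to\Quot^m_{E_{0,1}}\coprod\Quot^m_{E_{0,2}}$; thus the union of Quot-schemes, equipped with this family, itself represents the moduli functor, and the lemma follows from uniqueness of the fine moduli space. Your proposed ``technical heart'' would not go through as stated: reducedness of $\Quot^m_{E_{0,i}}$ (\'etale-locally a punctual Quot-scheme of $\mathcal O^{\oplus 2}$ on a $3$-fold) is not known and should not be assumed; ``bijective and an isomorphism on Zariski tangent spaces'' does not imply isomorphism when the target is non-reduced (e.g.\ $\Spec\C[x]/(x^2)\hookrightarrow\Spec\C[x]/(x^3)$), so even though the tangent spaces do match ($\Hom(E,E_{0,i}/E)\cong\Ext^1(E,E)$, using $\Ext^1(E_{0,i},E_{0,i})=0$ and $\Ext^1(E,E_{0,i})=0$), this comparison cannot close the argument; and the fallback that ``only the reduced structure matters for $\W\chi$'' is false, since Behrend's $\nu$-function depends on the scheme structure (it equals $n$ at a fat point of length $n$) and the later arguments must transport the symmetric obstruction theory through this identification. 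The repair is already inside your functorial construction: once both morphisms are produced from the universal families, the two composites are the identity by the universal properties of the two fine moduli problems, so the tangent-space and reducedness considerations can simply be discarded.
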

\begin{proof}
Let $E \in \Quot^m_{E_{0, i}}$ with $i = 1$ or $2$. 
Then we have an exact sequence:
\begin{eqnarray}     \label{m0Q_Tho}
0 \rightarrow E \rightarrow E_{0, i} \rightarrow
Q \rightarrow 0
\end{eqnarray}
where $Q$ is supported at finitely many points and $h^0(Y, Q) = m$.
Note that
\begin{eqnarray*}
c(E) = c(E_{0, i})/c(Q) = {\bf c}_0/(1+2m[y_0]) 
= -2m[y_0] + {\bf c}_0 = {\bf c}_{2m}.
\end{eqnarray*}
Also, $E$ is $L$-stable since $E_{0, i}$ is $L$-stable. Hence 
\begin{eqnarray}     \label{EmIn_Tho}
E \in \overline{\mathfrak M}_{L}({\bf c}_{2m}).
\end{eqnarray}

Conversely, let $E \in \overline{\mathfrak M}_{L}({\bf c}_{2m})$.
The same argument in the proof of Theorem~3.55 in \cite{Tho},
which uses only the first and second Chern classes of $E$,
shows that $E^{**} \cong E_{0, i}$ where $i = 1$ or $2$. 
Calculating the Chern classes from the canonical exact sequence
$0 \rightarrow E \rightarrow E^{**} \rightarrow Q \rightarrow 0$,
we get $c(Q) = 1+2m[y_0]$. So $Q$ is supported at finitely many points
with $h^0(Y, Q) = m$, and
\begin{eqnarray}           \label{EQm}
E \in \Quot^m_{E_{0, i}}.
\end{eqnarray}

It is well-known that the Grothendieck Quot-schemes are 
fine moduli spaces. So over $\Quot^m_{E_{0, i}} \times Y$, 
there exists universal exact sequence 
\begin{eqnarray*}        
0 \to \mathcal E_{m, i} \to \rho_2^*E_{0, i} \to 
\mathcal Q_{m, i} \to 0
\end{eqnarray*}
where $\rho_2$ is the second projection of 
$\Quot^m_{E_{0, i}} \times Y$. By (\ref{EmIn_Tho}), the sheaf
\begin{eqnarray}   \label{Em12}   
\mathcal E_{m, 1} \coprod \mathcal E_{m, 2}   
\end{eqnarray}
over $\Quot^m_{E_{0, 1}} \coprod \Quot^m_{E_{0, 2}}$
parametrizes a flat family of Gieseker $L$-semistable
rank-$2$ sheaves with Chern class ${\bf c}_{2m}$.
To show that (\ref{Em12}) is universal,
let $\mathcal E$ be a flat family of Gieseker $L$-semistable
rank-$2$ sheaves with Chern class ${\bf c}_{2m}$ 
parametrized by $T$. By (\ref{EQm}) and the universal property
of Quot-schemes, there is a morphism
\begin{eqnarray*}        
\psi: T \to \Quot^m_{E_{0, 1}} \coprod \Quot^m_{E_{0, 2}}
\end{eqnarray*}
such that $\mathcal E = (\psi \times {\rm Id}_Y)^*
(\mathcal E_{m, 1} \coprod \mathcal E_{m, 2})$.
Therefore, (\ref{Em12}) is a universal family.
\end{proof}

\begin{remark}    \label{rmk_lma_Tho}
From the proof of Lemma~\ref{lma_Tho}, we see that if
the moduli space $\overline{\mathfrak M}_{L}({\bf c}_{m})$
is not empty, then $m$ must be even and nonnegative.
\end{remark}

\begin{proposition}    \label{prop_e}
Let $Y$ be a smooth quartic hypersurface in the quadric $Q_0$,
and let ${\bf c}_m = -m \, [y_0] + \big ( 1 + H|_Y + P|_Y \big ) 
\in A^*(Y)$. Then, 
\begin{eqnarray}    \label{prop_e.0} 
  \sum_{m \in \Z} \chi \big (
    \overline{\mathfrak M}_{L}({\bf c}_m) \big ) \, q^m
= 2 \cdot M(q^2)^{2 \, \chi(Y)}.         
\end{eqnarray}
\end{proposition}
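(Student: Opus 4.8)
Looking at Proposition 4.6, we need to compute the generating series $\sum_{m \in \Z} \chi(\overline{\mathfrak M}_L({\bf c}_m)) q^m = 2 \cdot M(q^2)^{2\chi(Y)}$.

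Let me think about this. By Remark 4.3, the moduli space is empty unless $m$ is even and nonnegative. By Lemma 4.2, for $m \geq 0$, $\overline{\mathfrak M}_L({\bf c}_{2m}) \cong \Quot^m_{E_{0,1}} \coprod \Quot^m_{E_{0,2}}$.

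So $\sum_{m \in \Z} \chi(\overline{\mathfrak M}_L({\bf c}_m)) q^m = \sum_{m \geq 0} \chi(\overline{\mathfrak M}_L({\bf c}_{2m})) q^{2m} = \sum_{m \geq 0} [\chi(\Quot^m_{E_{0,1}}) + \chi(\Quot^m_{E_{0,2}})] q^{2m}$.

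Now $E_{0,i}$ is a locally free rank-2 sheaf on $Y$. By equation (e_lf), $e(\Quot^m_{E_{0,i}}; s, t) = e(\Quot^m_{\mathcal O_Y^{\oplus 2}}; s, t)$, so in particular $\chi(\Quot^m_{E_{0,i}}) = \chi(\Quot^m_{\mathcal O_Y^{\oplus 2}})$.

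By Corollary 3.10 (Q_3fold), since $Y$ is a smooth projective 3-fold, $\sum_{m=0}^{\infty} \chi(\Quot^m_{\mathcal O_Y^{\oplus r}}) q^m = M(q)^{r \chi(Y)}$. With $r = 2$: $\sum_{m=0}^{\infty} \chi(\Quot^m_{\mathcal O_Y^{\oplus 2}}) q^m = M(q)^{2\chi(Y)}$.

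Therefore $\sum_{m \geq 0} \chi(\Quot^m_{E_{0,i}}) q^{2m} = M(q^2)^{2\chi(Y)}$ for each $i \in \{1, 2\}$.

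Adding: $\sum_{m \in \Z} \chi(\overline{\mathfrak M}_L({\bf c}_m)) q^m = 2 M(q^2)^{2\chi(Y)}$.

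This is quite straightforward given the earlier results. The proof is essentially just assembling these pieces.

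=== PROOF PROPOSAL ===

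\begin{proof}
The plan is to reduce the computation to the Euler characteristics of Quot-schemes over $Y$ already handled in \S\ref{sect_Euler}. First I would invoke Remark~\ref{rmk_lma_Tho}: the moduli space $\overline{\mathfrak M}_{L}({\bf c}_m)$ is empty unless $m$ is even and nonnegative, so only the terms with $m = 2\ell$, $\ell \ge 0$, contribute to the left-hand side of (\ref{prop_e.0}). For such $m$, Lemma~\ref{lma_Tho} identifies $\overline{\mathfrak M}_{L}({\bf c}_{2\ell})$ with the disjoint union $\Quot^\ell_{E_{0, 1}} \coprod \Quot^\ell_{E_{0, 2}}$, hence
$$
\chi \big ( \overline{\mathfrak M}_{L}({\bf c}_{2\ell}) \big )
= \chi \big ( \Quot^\ell_{E_{0, 1}} \big ) + \chi \big ( \Quot^\ell_{E_{0, 2}} \big ).
$$

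Next I would use that each $E_{0, i} = B_i|_Y$ is a locally free rank-$2$ sheaf on $Y$. By (\ref{e_lf}) applied with $\mathcal V = E_{0, i}$ and $B = {\rm Spec}(\C)$, the virtual Hodge polynomials satisfy $e \big ( \Quot^\ell_{E_{0, i}}; s, t \big ) = e \big ( \Quot^\ell_{\mathcal O_Y^{\oplus 2}}; s, t \big )$; setting $s = t = 1$ and using (\ref{hodge_i}) gives $\chi \big ( \Quot^\ell_{E_{0, i}} \big ) = \chi \big ( \Quot^\ell_{\mathcal O_Y^{\oplus 2}} \big )$ for $i = 1, 2$. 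Therefore
$$
\sum_{m \in \Z} \chi \big ( \overline{\mathfrak M}_{L}({\bf c}_m) \big ) q^m
= 2 \sum_{\ell = 0}^{+\infty} \chi \big ( \Quot^\ell_{\mathcal O_Y^{\oplus 2}} \big ) q^{2\ell}.
$$

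Finally, since $Y$ is a smooth projective complex $3$-fold, Corollary~\ref{Q_3fold} with $r = 2$ yields $\sum_{\ell = 0}^{+\infty} \chi \big ( \Quot^\ell_{\mathcal O_Y^{\oplus 2}} \big ) q^\ell = M(q)^{2 \chi(Y)}$, and substituting $q^2$ for $q$ gives $\sum_{\ell = 0}^{+\infty} \chi \big ( \Quot^\ell_{\mathcal O_Y^{\oplus 2}} \big ) q^{2\ell} = M(q^2)^{2 \chi(Y)}$. Combining the last two displays proves (\ref{prop_e.0}). The only genuine input is the geometric identification of the moduli spaces in Lemma~\ref{lma_Tho}; the rest is bookkeeping with the already-established generating function identities, so there is no real obstacle here.
\end{proof}
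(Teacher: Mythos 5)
Your proposal is correct and follows essentially the same route as the paper's proof: reduce to even nonnegative $m$ via Remark~\ref{rmk_lma_Tho}, identify $\overline{\mathfrak M}_{L}({\bf c}_{2m})$ with $\Quot^m_{E_{0,1}} \coprod \Quot^m_{E_{0,2}}$ via Lemma~\ref{lma_Tho}, pass to trivialized Quot-schemes via (\ref{e_lf}) and (\ref{hodge_i}), and conclude with Corollary~\ref{Q_3fold}. No gaps to flag.
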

\begin{proof}
By Remark~\ref{rmk_lma_Tho}, $\overline{\mathfrak M}_{L}({\bf c}_m)
= \emptyset$ if $m < 0$ or $m$ is odd. So
\begin{eqnarray}   \label{prop_e.1} 
   \sum_{m \in \Z} \chi \big (
     \overline{\mathfrak M}_{L}({\bf c}_m) \big ) \, q^m
=    \sum_{m=0}^{+\infty} \chi \big (
     \overline{\mathfrak M}_{L}({\bf c}_{2m}) \big ) \, q^{2m}.
\end{eqnarray}
By Lemma~\ref{lma_Tho} and (\ref{e_lf}), 
$e \big ( \overline{\mathfrak M}_{L}({\bf c}_{2m}); s, t \big )
= 2 \cdot e \big ( \Quot^m_{\mathcal O^{\oplus 2}_Y}; s,t \big )$. 
Setting $s=t=1$ and using (\ref{hodge_i}), we conclude that
$\chi \big (\overline{\mathfrak M}_{L}({\bf c}_{2m}) \big )
= 2 \cdot \chi \big (\Quot^m_{\mathcal O^{\oplus 2}_Y} \big )$. 
Now our formula (\ref{prop_e.0}) follows immediately from 
(\ref{prop_e.1}) and Corollary~\ref{Q_3fold}.
\end{proof}

Let ${\W F}_m \subset \Quot^m_{E_{0, 1}}$ be the punctual Quot-scheme 
defined by:
\begin{eqnarray}    \label{def-punctual} 
{\W F}_m = \{ E \in \Quot^m_{E_{0, 1}}| \, E_{0, 1}/E 
\text{\, is supported at } y_0 \}.         
\end{eqnarray}
Fix a Zariski open neighborhood $Y_0$ of the point $y_0 \in Y$ such that 
$E_{0, 1}|_{Y_0} \cong \mathcal O_{Y_0}^{\oplus 2}$, and define the 
open subset $\Quot^m_{E_{0, 1}}(Y_0) \subset \Quot^m_{E_{0, 1}}$ by
\begin{eqnarray*}    
\Quot^m_{E_{0, 1}}(Y_0) = \{ E \in \Quot^m_{E_{0, 1}}| \, 
E_{0, 1}/E \text{\, is supported in } Y_0 \}.
\end{eqnarray*}
Then, ${\W F}_m \subset \Quot^m_{E_{0, 1}}(Y_0) \cong 
\Quot^m_{\mathcal O_{Y_0}^{\oplus 2}}$. Consider the embedding 
$\mathbb T = \C^* \hookrightarrow \mathbb T_2 
= (\C^*)^2 \subset \text{\rm Aut}(\mathcal O_{Y_0}^{\oplus 2})$ 
via $t \mapsto (1, t)$ and the induced $\mathbb T$-action on 
$\Quot^m_{\mathcal O_{Y_0}^{\oplus 2}}$. Arguments similar to those in 
the proof of Lemma~\ref{fixed_pt_str} show that 
$E \in \big ( \Quot^m_{\mathcal O_{Y_0}^{\oplus 2}} \big )^{\mathbb T}$
if and only if $E = I_{Z_1} \oplus I_{Z_2} \subset \mathcal O_{Y_0}
\oplus \mathcal O_{Y_0}$ where $Z_1$ and $Z_2$ are $0$-dimensional 
closed subschemes of $Y_0$ with $\ell(Z_1) + \ell(Z_2) = m$. Thus, we obtain
\begin{eqnarray}    \label{td-T-fixed} 
\big ( \Quot^m_{\mathcal O_{Y_0}^{\oplus 2}} \big )^{\mathbb T} \,\, \cong
\,\, \coprod_{i = 0}^m \Hilb^i(Y_0) \times \Hilb^{m-i}(Y_0).
\end{eqnarray}

\begin{lemma}  \label{difference}
Let $Z_1$ and $Z_2$ be $0$-dimensional closed subschemes of $Y$. Then,
\begin{eqnarray}  \label{difference.0}
  \dim \Hom (I_{Z_1}, \mathcal O_{Z_2}) + 
  \dim \Hom (I_{Z_2}, \mathcal O_{Z_1})
\equiv \ell(Z_1) + \ell(Z_2) \pmod 2.
\end{eqnarray}
\end{lemma}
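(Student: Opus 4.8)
The plan is to compute both $\dim\Hom(I_{Z_1},\mathcal O_{Z_2})$ and $\dim\Hom(I_{Z_2},\mathcal O_{Z_1})$ from the short exact sequences $0\to I_{Z_i}\to\mathcal O_Y\to\mathcal O_{Z_i}\to 0$, and then pair them up so that the parity is controlled by a symmetric quantity. Applying $\Hom(-,\mathcal O_{Z_2})$ to the sequence for $Z_1$ gives a long exact sequence whose terms involve $\Ext^k(\mathcal O_Y,\mathcal O_{Z_2})=H^k(Y,\mathcal O_{Z_2})$ — which vanishes for $k>0$ since $Z_2$ is $0$-dimensional — together with $\Ext^k(\mathcal O_{Z_1},\mathcal O_{Z_2})$. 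Since both $\mathcal O_{Z_i}$ have finite length and the same holds after tensoring with the canonical bundle $K_Y\cong\mathcal O_Y$, Serre duality on the $3$-fold $Y$ gives $\Ext^k(\mathcal O_{Z_1},\mathcal O_{Z_2})\cong\Ext^{3-k}(\mathcal O_{Z_2},\mathcal O_{Z_1})^*$.

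The key computation is therefore to extract $\dim\Hom(I_{Z_1},\mathcal O_{Z_2})$ in terms of the $\Ext^k(\mathcal O_{Z_1},\mathcal O_{Z_2})$ and the length $\ell(Z_2)=\dim\Hom(\mathcal O_Y,\mathcal O_{Z_2})=h^0(\mathcal O_{Z_2})$. From the long exact sequence one reads off
\begin{eqnarray*}
\dim\Hom(I_{Z_1},\mathcal O_{Z_2})
&=& \ell(Z_2) - \dim\Hom(\mathcal O_{Z_1},\mathcal O_{Z_2}) + \dim\Ext^1(\mathcal O_{Z_1},\mathcal O_{Z_2}).
\end{eqnarray*}
Writing $e_k=\dim\Ext^k(\mathcal O_{Z_1},\mathcal O_{Z_2})$ and the analogous $e_k'=\dim\Ext^k(\mathcal O_{Z_2},\mathcal O_{Z_1})$, Serre duality gives $e_k'=e_{3-k}$. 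Adding the two expressions for the two Hom-dimensions yields
\begin{eqnarray*}
\dim\Hom(I_{Z_1},\mathcal O_{Z_2})+\dim\Hom(I_{Z_2},\mathcal O_{Z_1})
= \ell(Z_1)+\ell(Z_2) - (e_0+e_0') + (e_1+e_1')
= \ell(Z_1)+\ell(Z_2) - (e_0+e_3) + (e_1+e_2),
\end{eqnarray*}
using $e_0'=e_3$ and $e_1'=e_2$. Thus modulo $2$ the right-hand side equals $\ell(Z_1)+\ell(Z_2)-\sum_{k=0}^3(-1)^k e_k = \ell(Z_1)+\ell(Z_2)-\chi(\mathcal O_{Z_1},\mathcal O_{Z_2})$, and the Euler pairing $\chi(\mathcal O_{Z_1},\mathcal O_{Z_2})$ of two sheaves supported in dimension $0$ on a threefold vanishes (it is computed by a Riemann--Roch expression in Chern characters all of whose terms have too high codimension, or directly: it is additive and vanishes on a point). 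Therefore the sum is congruent to $\ell(Z_1)+\ell(Z_2)$ mod $2$, which is exactly (\ref{difference.0}).

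The main obstacle is the careful bookkeeping in the long exact sequence and making sure the alternating sum collapses correctly — in particular that the $\Ext^2$ and $\Ext^3$ terms of $\Hom(I_{Z_1},-)$ (which do appear in the full long exact sequence but not in the low-degree part I quoted) are handled so that only the parity matters; this is where pairing the two sequences and invoking Serre duality to identify $e_k'$ with $e_{3-k}$ does the real work. An alternative, slightly cleaner route avoiding the explicit long exact sequence is to compute $\chi(I_{Z_1},\mathcal O_{Z_2})$ directly by Riemann--Roch (it equals $\int_Y\ch(I_{Z_1})^\vee\ch(\mathcal O_{Z_2})\Todd(Y)$, and since $\ch(\mathcal O_{Z_2})$ is supported in the top graded piece only the rank of $I_{Z_1}$ contributes, giving $\chi(I_{Z_1},\mathcal O_{Z_2})=\ell(Z_2)$), then use vanishing of the higher $\Ext$'s that are forced by Serre duality against $\Ext^{<0}(\mathcal O_{Z_2},I_{Z_1})$, reducing the parity of $\dim\Hom(I_{Z_1},\mathcal O_{Z_2})$ to that of $\ell(Z_2)+\dim\Ext^1(I_{Z_1},\mathcal O_{Z_2})$ and then symmetrizing; either way the punchline is the vanishing of the $0$-dimensional Euler pairing on the Calabi--Yau threefold.
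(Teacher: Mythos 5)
Your argument is correct, and it takes a genuinely different (though closely parallel) route to the one in the paper. Both proofs share the same skeleton --- express the two Hom-dimensions via the defining sequences $0\to I_{Z_i}\to\mathcal O_Y\to\mathcal O_{Z_i}\to 0$, symmetrize, invoke Serre duality with $K_Y\cong\mathcal O_Y$, and finish with an Euler-pairing computation --- but they reduce to different pairings. You apply $\Hom(-,\mathcal O_{Z_2})$ and land on $\chi(\mathcal O_{Z_1},\mathcal O_{Z_2})$, which vanishes for purely dimensional reasons (the product of two classes supported in codimension $3$ dies on a $3$-fold); the only auxiliary vanishing you need is $H^k(Y,\mathcal O_{Z_2})=0$ for $k>0$, which is automatic for $0$-dimensional sheaves. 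The paper instead applies $\Hom(I_{Z_2},-)$, which requires $\Ext^1(I_{Z_2},\mathcal O_Y)=0$, i.e. $H^2(Y,\mathcal O_Y)=0$ (coming from $H^1(Y,\mathcal O_Y)=0$ for the specific $Y$ at hand), and lands on $\chi(I_{Z_2},I_{Z_1})$, computed by Hirzebruch--Riemann--Roch using $c_3(I_{Z_i})=-2[Z_i]$ to equal $-\ell(Z_1)+\ell(Z_2)$. So your route is slightly more general (it works on any smooth projective $3$-fold with trivial canonical bundle, with no hypothesis on $H^1(\mathcal O_Y)$) and trades the $c_3$ computation for a degree count, while the paper's route keeps the ideal sheaves throughout, matching the objects that actually occur in the moduli problem. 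Two minor remarks: the truncation of your long exact sequence at $\Ext^1(\mathcal O_Y,\mathcal O_{Z_2})=0$ is clean, so your worry about the higher terms is unnecessary; and the ``alternative route'' sketched at the end is not complete as stated (the term $\Ext^2(I_{Z_1},\mathcal O_{Z_2})\cong\Ext^1(\mathcal O_{Z_2},I_{Z_1})^*$ does not obviously drop out), but that aside does not affect your main argument.
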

\begin{proof}
We have $H^2(Y, \mathcal O_Y) \cong H^1(Y, \mathcal O_Y)^* = 0$. 
Taking cohomology from the exact sequence 
$0 \to I_{Z_2} \to \mathcal O_Y \to \mathcal O_{Z_2} 
\to 0$, we get $H^2(Y, I_{Z_2}) \cong H^1(Y, \mathcal O_{Z_2})=0$.
Thus, 
\begin{eqnarray*}
\Ext^1 (I_{Z_2}, \mathcal O_Y) \cong H^2(Y, I_{Z_2})^* = 0.
\end{eqnarray*}

Applying $\Hom(I_{Z_2}, \cdot)$ to the exact sequence
$0 \to I_{Z_1} \to \mathcal O_Y \to \mathcal O_{Z_1} \to 0$,
we obtain the following exact sequence:
\begin{eqnarray*}  
&0 \to \Hom (I_{Z_2}, I_{Z_1}) \to \Hom (I_{Z_2}, \mathcal O_Y)
  \to \Hom (I_{Z_2}, \mathcal O_{Z_1})& \\
&\to \Ext^1(I_{Z_2}, I_{Z_1}) \to \Ext^1 (I_{Z_2}, \mathcal O_Y).&
\end{eqnarray*}
Since $\Ext^1 (I_{Z_2}, \mathcal O_Y) = 0$ and 
$\Hom (I_{Z_2}, \mathcal O_Y) \cong \mathbb C$, we conclude that
\begin{eqnarray*}  
\dim \Hom (I_{Z_2}, \mathcal O_{Z_1}) = 
-\dim \Hom (I_{Z_2}, I_{Z_1}) + \dim \Ext^1(I_{Z_2}, I_{Z_1})+1.
\end{eqnarray*}
By symmetry, we have a similar formula for $\dim \Hom (I_{Z_1}, 
\mathcal O_{Z_2})$. Therefore,
\begin{eqnarray}  \label{difference.1}
& &\dim \Hom (I_{Z_2}, \mathcal O_{Z_1}) + 
   \dim \Hom (I_{Z_1}, \mathcal O_{Z_2}) \nonumber   \\
&=&-\dim \Hom (I_{Z_2}, I_{Z_1}) + \dim \Ext^1(I_{Z_2}, I_{Z_1})+2 
   \nonumber  \\
& &- \dim \Hom (I_{Z_1}, I_{Z_2}) + \dim \Ext^1(I_{Z_1}, I_{Z_2}) 
   \nonumber  \\
&\equiv&-\dim \Hom (I_{Z_2}, I_{Z_1}) + \dim \Ext^1(I_{Z_2}, I_{Z_1})
   \nonumber   \\
& &+ \dim \Ext^3(I_{Z_2}, I_{Z_1}) - \dim \Ext^2(I_{Z_2}, I_{Z_1}) 
   \pmod 2   \nonumber   \\
&\equiv&- \chi(I_{Z_2}, I_{Z_1}) \pmod 2.
\end{eqnarray}
Since $c_3(I_{Z_i}) = -c_3(\mathcal O_{Z_i}) = -2[Z_i]$,
the Hirzebruch-Riemann-Roch formula gives 
\begin{eqnarray*}  
\chi(I_{Z_2}, I_{Z_1}) 
= \int_Y \ch(I_{Z_2})^* \cdot \ch(I_{Z_1}) \cdot \text{\rm td}(T_Y)
= -\ell(Z_1) + \ell(Z_2).
\end{eqnarray*}
Combining this with (\ref{difference.1}) yields the desired formula
(\ref{difference.0}).
\end{proof}

For simplicity, regard $\Quot^m_{E_{0, 1}}(Y_0) =
\Quot^m_{\mathcal O_{Y_0}^{\oplus 2}}$. The symmetric obstruction theory
on $\Quot^m_{E_{0, 1}}$ restricts to a symmetric obstruction theory on 
$\Quot^m_{E_{0, 1}}(Y_0) = \Quot^m_{\mathcal O_{Y_0}^{\oplus 2}}$. 
This symmetric obstruction theory on $\Quot^m_{E_{0, 1}}(Y_0) = 
\Quot^m_{\mathcal O_{Y_0}^{\oplus 2}}$ is $\mathbb T$-equivariant
since the construction of the symmetric obstruction theory is stable
under base change (see \cite{BF, Tho}) and our Gieseker moduli space is 
fine. Let ${\W \nu}_m$ be the restriction of Behrend's function 
$\nu_{\Quot^m_{E_{0, 1}}}$ to ${\W F}_m$. Then 
$\nu_{\Quot^m_{\mathcal O_{Y_0}^{\oplus 2}}}|_{{\W F}_m} = {\W \nu}_m$.

\begin{lemma}  \label{td-punct}
$\chi({\W F}_m, {\W \nu}_m) = \chi({\W F}_m)$.
\end{lemma}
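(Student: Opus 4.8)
The plan is to evaluate both sides of the claimed identity $\chi(\W F_m, \W\nu_m) = \chi(\W F_m)$ by exploiting the $\mathbb T = \C^*$-action on $\Quot^m_{\mathcal O_{Y_0}^{\oplus 2}}$ described just above the lemma, together with Theorem~\ref{fixedset} (Theorem~C). Since $\W F_m$ is $\mathbb T$-invariant and the symmetric obstruction theory on $\Quot^m_{E_{0,1}}(Y_0) = \Quot^m_{\mathcal O_{Y_0}^{\oplus 2}}$ is $\mathbb T$-equivariant, Behrend's function is constructible and constant along orbits, so by the usual additivity of the weighted Euler characteristic (and formula (\ref{C*})) the computation of $\chi(\W F_m, \W\nu_m)$ localizes to the fixed locus $\W F_m^{\mathbb T}$. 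First I would identify $\W F_m^{\mathbb T}$: by (\ref{td-T-fixed}) and the fact that $\W F_m$ imposes support at the single point $y_0$, we get $\W F_m^{\mathbb T} \cong \coprod_{i=0}^m \Hilb^i(Y_0, y_0) \times \Hilb^{m-i}(Y_0, y_0)$, each factor being a punctual Hilbert scheme of a point in a smooth threefold.

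Next I would apply Theorem~\ref{fixedset}(ii) pointwise: for $P = (Z_1, Z_2) \in \W F_m^{\mathbb T}$, viewed inside $X := \Quot^m_{\mathcal O_{Y_0}^{\oplus 2}}$ (locally, $\W F_m$ and $X$ agree near such points in the sense needed), we have $\nu_X(P) = (-1)^{\dim T_P X - \dim T_P(X^{\mathbb T})} \cdot \nu_{X^{\mathbb T}}(P)$. The key is to compute the exponent $\dim T_P X - \dim T_P(X^{\mathbb T}) \pmod 2$, i.e. the dimension of the moving part of the Zariski tangent space at $P$. At $P = I_{Z_1} \oplus I_{Z_2}$, the tangent space to the Quot-scheme is $\Hom(E, \mathcal O_{Z_1} \oplus \mathcal O_{Z_2})$ where $E = I_{Z_1}\oplus I_{Z_2}$, which decomposes as $\bigoplus_{j,k} \Hom(I_{Z_j}, \mathcal O_{Z_k})$; the $\mathbb T$-fixed part is the $j=k$ summands, so the moving part has dimension $\dim \Hom(I_{Z_1}, \mathcal O_{Z_2}) + \dim \Hom(I_{Z_2}, \mathcal O_{Z_1})$. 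By Lemma~\ref{difference}, this is congruent to $\ell(Z_1) + \ell(Z_2) = m \pmod 2$. Hence $\nu_X(P) = (-1)^m \cdot \nu_{X^{\mathbb T}}(P)$ for every $P \in \W F_m^{\mathbb T}$.

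To finish, I would combine this with the $\mathbb T$-localization of the weighted Euler characteristic and the analogous statement for $\W F_m^{\mathbb T}$ itself. By Behrend's work, $\chi(\W F_m^{\mathbb T}, \nu_{\W F_m^{\mathbb T}})$ is the Donaldson-Thomas-type invariant of the punctual Hilbert schemes; but more directly, the punctual Hilbert scheme $\Hilb^i(\C^3, O)$ carries its own torus action from $(\C^*)^3$, and the results of \cite{JLi, BF, LP} (invoked in the introduction) give $\nu = (-1)^{\dim T}$ on $(\C^*)^3$-fixed points, so by \'etale-local / localization arguments one gets $\chi(\Hilb^i(\C^3,O), \nu) = (-1)^i \chi(\Hilb^i(\C^3,O))$ — more precisely, the sign on the fixed monomial-ideal points is $(-1)^i$, which is exactly what makes the punctual degree-zero DT series equal $M(-q)$ versus $M(q)$. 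Thus $\chi(\W F_m^{\mathbb T}, \nu_{\W F_m^{\mathbb T}}) = (-1)^m \chi(\W F_m^{\mathbb T})$. Multiplying through by the extra $(-1)^m$ from the moving tangent directions gives $\chi(\W F_m, \W\nu_m) = \chi(\W F_m^{\mathbb T}, \W\nu_m|_{\W F_m^{\mathbb T}}) = (-1)^m \cdot (-1)^m \cdot \chi(\W F_m^{\mathbb T}) = \chi(\W F_m^{\mathbb T}) = \chi(\W F_m)$, the last equality again by (\ref{C*}).

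The main obstacle I anticipate is the bookkeeping in the second step: verifying cleanly that the $\mathbb T$-fixed part of the tangent space at $P = I_{Z_1}\oplus I_{Z_2}$ is precisely the "diagonal" $\Hom(I_{Z_1},\mathcal O_{Z_1}) \oplus \Hom(I_{Z_2},\mathcal O_{Z_2})$ summand — one must track how the weight-$(1,t)$ action on $\mathcal O_{Y_0}^{\oplus 2}$ induces weights on $\Hom(I_{Z_j},\mathcal O_{Z_k})$ and check that only the off-diagonal Hom's move — and then matching this with Theorem~\ref{fixedset}, which was stated for a single $\C^*$ rather than simultaneously with the residual torus needed to further localize the fixed locus. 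Care is also needed that Theorem~\ref{fixedset} applies on $\W F_m$, which is a closed subscheme rather than the full Quot-scheme, but since $\W F_m$ is $\mathbb T$-invariant and inherits a symmetric obstruction theory (being an open/closed piece of the fine moduli space near support at $y_0$), and since Behrend's function is intrinsic, this causes no real difficulty. Everything else — additivity of $\chi(\cdot, \nu)$ over the stratification by $\mathbb T$-orbit type, and the reduction to punctual Hilbert schemes — is standard.
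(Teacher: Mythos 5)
Your proposal is correct and follows essentially the same route as the paper: localize $\chi(\W F_m,\W\nu_m)$ to $(\W F_m)^{\mathbb T}$ via (\ref{C*}), identify the fixed locus with products of punctual Hilbert schemes as in (\ref{td-T-fixed}), apply Theorem~\ref{fixedset}~(ii) with the parity of the off-diagonal $\Hom(I_{Z_j},\mathcal O_{Z_k})$ terms controlled by Lemma~\ref{difference} to get the sign $(-1)^m$, and cancel it against the $(-1)^i\,(-1)^{m-i}$ signs for punctual Hilbert schemes. The only cosmetic difference is that where you sketch a torus-localization argument for $\chi(\Hilb^i(Y,y_0),\nu_{\Hilb^i(Y)})=(-1)^i\chi(\Hilb^i(Y,y_0))$, the paper simply cites Corollary~4.3 of \cite{BF}.
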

\noindent
{\it Proof.}
Note that ${\W F}_m \subset \Quot^m_{\mathcal O_{Y_0}^{\oplus 2}}$ is 
$\mathbb T$-invariant. For each $n \in \Z$, the subset 
$$
\{E \in {\W F}_m - ({\W F}_m)^{\mathbb T} |{\W \nu}_m(E) = n \}
$$ 
is $\mathbb T$-invariant and does not contain any fixed point. 
By (\ref{C*}), 
$$
\chi \big ( \{E \in {\W F}_m - ({\W F}_m)^{\mathbb T} |
{\W \nu}_m(E) = n \} \big ) = 0.
$$ 
By definition, $\chi({\W F}_m, {\W \nu}_m) = \sum_{n \in \Z} n \cdot 
\chi \big ( \{E \in {\W F}_m |{\W \nu}_m(E) = n \} \big )$. Thus,
\begin{eqnarray*}  
   \chi({\W F}_m, {\W \nu}_m) 
= \sum_{n \in \Z} n \cdot \chi \big ( \{E \in ({\W F}_m)^{\mathbb T} 
    |{\W \nu}_m(E) = n \} \big ).
\end{eqnarray*}
In view of (\ref{td-T-fixed}), $E \in ({\W F}_m)^{\mathbb T}$ if and only 
if $E = I_{Z_1} \oplus I_{Z_2} \subset \mathcal O_{Y_0} \oplus 
\mathcal O_{Y_0}$ where $Z_1 \in \Hilb^i(Y_0, y_0)$ and $Z_2 \in 
\Hilb^{m-i}(Y_0, y_0)$ for some integer $i$ satisfying $0 \le i \le m$. 
In this case, we obtain from Theorem~\ref{fixedset}~(ii) that
\begin{eqnarray*}
   {\W \nu}_m(E) 
&=&\nu_{\Quot^m_{\mathcal O_Y^{\oplus 2}}}(E) 
   = \nu_{\Quot^m_{\mathcal O_{Y_0}^{\oplus 2}}}(E)    \\
&=&(-1)^a \cdot \nu_{\Hilb^i(Y_0) \times \Hilb^{m-i}(Y_0)}(Z_1, Z_2)  \\
&=&(-1)^a \cdot \nu_{\Hilb^i(Y) \times \Hilb^{m-i}(Y)}(Z_1, Z_2)
\end{eqnarray*}
where $a$ is the difference between the dimensions of 
the Zariski tangent spaces:
\begin{eqnarray*}  
   a 
&=&\dim T_E \Quot^m_{\mathcal O_{Y_0}^{\oplus 2}} - \left ( 
   \dim T_{Z_1} \Hilb^i(Y_0) + \dim T_{Z_2} \Hilb^{m-i}(Y_0) \right )  \\
&=&\dim \Hom (I_{Z_1} \oplus I_{Z_2}, \mathcal O_{Z_1} \oplus 
   \mathcal O_{Z_2}) - \sum_{k=1}^2 \dim \Hom (I_{Z_k}, \mathcal O_{Z_k}) \\
&\equiv&\ell(Z_1) + \ell(Z_2) \pmod 2    \\
&\equiv&m    \pmod 2.
\end{eqnarray*}
by Lemma~\ref{difference}. Therefore, ${\W \nu}_m(E) = (-1)^m \cdot 
\nu_{\Hilb^i(Y) \times \Hilb^{m-i}(Y)}(Z_1, Z_2)$ and
\begin{eqnarray*}  
& &\chi({\W F}_m, {\W \nu}_m)    \\
&=&\sum_{n \in \Z} n \cdot \sum_{i =0}^m \chi \big ( \{(Z_1, Z_2) \in
   \Hilb^i(Y, y_0) \times \Hilb^{m-i}(Y, y_0)|\nu(Z_1, Z_2) = (-1)^m n \} \big ) \\
&=&(-1)^m \, \sum_{i =0}^m \sum_{n \in \Z} n \cdot \chi \big ( \{(Z_1, Z_2) \in
   \Hilb^i(Y, y_0) \times \Hilb^{m-i}(Y, y_0)|\nu(Z_1, Z_2) = n \} \big )  \\
&=&(-1)^m \, \sum_{i =0}^m \chi(\Hilb^i(Y, y_0) \times \Hilb^{m-i}(Y, y_0), \nu)  \\
&=&(-1)^m \, \sum_{i =0}^m \chi(\Hilb^i(Y, y_0), \nu_{\Hilb^i(Y)}) \cdot
   \chi(\Hilb^{m-i}(Y, y_0), \nu_{\Hilb^{m-i}(Y)})
\end{eqnarray*}
where $\nu$ denotes $\nu_{\Hilb^i(Y) \times \Hilb^{m-i}(Y)}$.
By the Corollary~4.3 in \cite{BF}, 
$$
\chi(\Hilb^i(Y, y_0), \nu_{\Hilb^i(Y)})= (-1)^i \chi(\Hilb^i(Y, y_0)).
$$
Combining this with $\chi(({\W F}_m)^{\mathbb T}) = \chi({\W F}_m)$,
we conclude that
\begin{equation}
\chi({\W F}_m, {\W \nu}_m) = \displaystyle{\sum_{i =0}^m \chi(\Hilb^i(Y, y_0)) 
\cdot \chi(\Hilb^{m-i}(Y, y_0))} = \chi(({\W F}_m)^{\mathbb T}) = \chi({\W F}_m).
\tag*{$\qed$}
\end{equation}

\begin{theorem}  \label{td-theorem}
Let $Y$ be a smooth quartic hypersurface in the quadric $Q_0$,
and let ${\bf c}_m = -m \, [y_0] + \big ( 1 + H|_Y + P|_Y \big ) 
\in A^*(Y)$. Then, 
\begin{eqnarray}    \label{td-theorem.0} 
  \sum_{m \in \Z} \lambda(L, {\bf c}_m) \, q^m
= 2 \cdot M(q^2)^{2 \, \chi(Y)}.         
\end{eqnarray}
\end{theorem}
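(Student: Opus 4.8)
The plan is to deduce the Donaldson--Thomas count from the topological Euler characteristic already computed in Proposition~\ref{prop_e}, the point being that for these moduli spaces the Behrend-weighted count coincides with the unweighted one. By (\ref{dt-Beh}) we have $\lambda(L,{\bf c}_m)=\W\chi\big(\overline{\mathfrak M}_L({\bf c}_m)\big)=\chi\big(\overline{\mathfrak M}_L({\bf c}_m),\nu\big)$, so in view of Proposition~\ref{prop_e} it suffices to prove $\chi\big(\overline{\mathfrak M}_L({\bf c}_m),\nu\big)=\chi\big(\overline{\mathfrak M}_L({\bf c}_m)\big)$ for every $m\in\Z$. Both sides vanish unless $m=2\mu$ with $\mu\ge 0$ (Remark~\ref{rmk_lma_Tho}), and by Lemma~\ref{lma_Tho}, which identifies fine moduli spaces, $\overline{\mathfrak M}_L({\bf c}_{2\mu})\cong \Quot^\mu_{E_{0,1}}\coprod\Quot^\mu_{E_{0,2}}$ and $\nu_{\overline{\mathfrak M}_L({\bf c}_{2\mu})}$ restricts to $\nu_{\Quot^\mu_{E_{0,i}}}$ on the two (open and closed) components. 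Hence the theorem reduces to the identity $\chi\big(\Quot^\mu_{E_{0,i}},\nu\big)=\chi\big(\Quot^\mu_{E_{0,i}}\big)$ for $i=1,2$.

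To prove this I would stratify $\Quot^\mu_{E_{0,i}}$ by the support of the quotient, i.e.\ by the fibres of the Hilbert--Chow-type morphism $\Quot^\mu_{E_{0,i}}\to\Sym^\mu(Y)$. Over a $0$-cycle $\sum_l n_l[p_l]$ with the $p_l$ pairwise distinct, the fibre is the product $\prod_l\Quot^{n_l}_{E_{0,i}}(Y,p_l)$ of punctual Quot-schemes, and near such a point $\Quot^\mu_{E_{0,i}}$ is \'etale-locally this product, with the induced (product) symmetric obstruction theory. Since Behrend's $\nu$-function is \'etale-local and multiplicative under products, the weighted Euler characteristic of each stratum is the product of the weighted Euler characteristics of the punctual factors; moreover, trivialising $E_{0,i}$ near any point $p\in Y$ identifies $\big(\Quot^n_{E_{0,i}}(Y,p),\nu\big)$ with $(\W F_n,\W\nu_n)$, independently of $p$ and of $i$ (here $\dim Y=3$). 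The standard power-structure bookkeeping over $Y$ — the same one used to prove Theorem~\ref{thm_Q} and Corollary~\ref{Q_3fold}, now carried out with the Behrend weighting — then yields
\[
\sum_{\mu\ge 0}\chi\big(\Quot^\mu_{E_{0,i}},\nu\big)\,q^\mu
=\Big(\sum_{n\ge 0}\chi(\W F_n,\W\nu_n)\,q^n\Big)^{\chi(Y)}.
\]

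Now I would apply Lemma~\ref{td-punct}: $\chi(\W F_n,\W\nu_n)=\chi(\W F_n)$ for all $n$, where $\sum_n\chi(\W F_n)q^n=M(q)^2$ (the local computation in the proof of Theorem~\ref{thm_Q}, since $\W F_n\cong\Quot^n_{\mathcal O^{\oplus 2}}(\C^3,O)$). Hence $\sum_{\mu}\chi(\Quot^\mu_{E_{0,i}},\nu)q^\mu=M(q)^{2\chi(Y)}$ for $i=1,2$; equivalently, in view of (\ref{e_lf}) and Corollary~\ref{Q_3fold}, the weighted count of $\Quot^\mu_{E_{0,i}}$ equals its unweighted count. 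Summing the two components and re-indexing $m=2\mu$ gives $\sum_{m\in\Z}\lambda(L,{\bf c}_m)q^m=\sum_{m\in\Z}\chi\big(\overline{\mathfrak M}_L({\bf c}_m)\big)q^m=2\,M(q^2)^{2\chi(Y)}$ by Proposition~\ref{prop_e}.

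The step I expect to be the main obstacle is the middle one: verifying that Behrend's function is compatible with the \'etale-local product decomposition of $\Quot^\mu_{E_{0,i}}$ along the support stratification, so that the motivic/power-structure formalism underlying Theorem~\ref{thm_Q} transfers verbatim to the $\nu$-weighted setting and reduces the computation to the punctual Quot-schemes. Once that reduction is in place, Lemma~\ref{td-punct} — where Theorem~\ref{fixedset} and the $\C^*$-action on the local model $\Quot^\mu_{\mathcal O_{Y_0}^{\oplus 2}}$, via (\ref{td-T-fixed}), do the real work — closes the argument, and the remaining manipulations are routine.
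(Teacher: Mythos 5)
Your proposal is correct and follows essentially the same route as the paper: after reducing via (\ref{dt-Beh}), Remark~\ref{rmk_lma_Tho} and Lemma~\ref{lma_Tho} to the Quot-schemes $\Quot^\mu_{E_{0,i}}$, the paper likewise stratifies by the support of the quotient (``adopting the proof of Theorem~4.11 in [BF]'') to express both $\W\chi$ and $\chi$ of $\Quot^\mu_{E_{0,i}}$ as partition sums of punctual contributions, and then invokes Lemma~\ref{td-punct} and Proposition~\ref{prop_e} exactly as you do. The step you flag as the main obstacle (compatibility of $\nu$ with the \'etale-local product decomposition along the support stratification) is precisely what the paper delegates to the Behrend--Fantechi argument, so your generating-function phrasing is just a repackaging of the paper's term-by-term identity.
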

\begin{proof}
By Remark~\ref{rmk_lma_Tho}, $\overline{\mathfrak M}_{L}({\bf c}_m)
= \emptyset$ if $m < 0$ or $m$ is odd. By (\ref{dt-Beh}),
\begin{eqnarray}   \label{td-theorem.1} 
  \sum_{m \in \Z} \lambda(L, {\bf c}_m) \, q^m
= \sum_{m=0}^{+\infty} \lambda(L, {\bf c}_{2m}) \, q^{2m}
= \sum_{m=0}^{+\infty} \W \chi \big ( 
   \overline{\mathfrak M}_L({\bf c}_{2m}) \big ) \, q^{2m}.
\end{eqnarray}
Adopting the proof of the Theorem~4.11 in \cite{BF}, we conclude that
\begin{eqnarray*}    
   \W \chi \big ( \Quot^m_{E_{0, 1}} \big ) 
&=&\sum_{\alpha \vdash n} |G_\alpha| \cdot \chi \big ( Y_0^{\ell(\alpha)} 
   \big ) \cdot \prod_i \chi({\W F}_{\alpha_i}, {\W \nu}_{\alpha_i}),  \\   
   \chi \big ( \Quot^m_{E_{0, 1}} \big ) 
&=&\sum_{\alpha \vdash n} |G_\alpha| \cdot \chi \big ( Y_0^{\ell(\alpha)} 
   \big ) \cdot \prod_i \chi({\W F}_{\alpha_i}).    
\end{eqnarray*}
Here, for a partition $\alpha$ of $n$, $G_\alpha$ denotes the 
automorphism group of $\alpha$, $\ell(\alpha)$ denotes the length 
of $\alpha$, and $Y_0^{\ell(\alpha)}$ denotes the open subset of 
the product $Y^{\ell(\alpha)}$ consisting of $\ell(\alpha)$-tuples
with pairwise distinct entries. By Lemma~\ref{td-punct}, 
$$
\W \chi \big ( \Quot^m_{E_{0, 1}} \big ) 
= \chi \big ( \Quot^m_{E_{0, 1}} \big ).
$$
Similarly, $\W \chi \big ( \Quot^m_{E_{0, 2}} \big ) = \chi \big ( 
\Quot^m_{E_{0, 2}} \big )$. By (\ref{td-theorem.1}) and Lemma~\ref{lma_Tho},
\begin{eqnarray*}    
  \sum_{m \in \Z} \lambda(L, {\bf c}_m) \, q^m
= \sum_{m=0}^{+ \infty} \chi \big ( 
   \overline{\mathfrak M}_L({\bf c}_{2m}) \big ) \, q^{2m}
= \sum_{m \in \Z} \chi \big ( 
   \overline{\mathfrak M}_L({\bf c}_m) \big ) \, q^m.
\end{eqnarray*}
Finally, we obtain $\displaystyle{\sum_{m \in \Z} \lambda(L, {\bf c}_m) 
\, q^m = 2 \cdot M(q^2)^{2 \, \chi(Y)}}$ from Proposition~\ref{prop_e}.
\end{proof}
\subsection{\bf Donaldson-Thomas invariants, II}
\label{subsect_DonaldsonII}
$\,$

Let $n \ge 2$ and $X= \mathbb P^1\times\mathbb P^n$.
Let $p$ be a point in $\Pee^1$, and $H$ be a hyperplane in $\Pee^n$. 
For simplicity, denote the divisor $a(\{p\} \times \Pee^n) 
+ b(\Pee^1 \times H)$ by $(a, b)$. When $a$ and $b$ are 
rational numbers, $(a, b)$ is a $\mathbb Q$-divisor and
$\mathcal O_X(a, b)$ is a $\mathbb Q$-line bundle. The divisor
$(1, r)$ is ample if and only if $r > 0$. Put
\begin{eqnarray}         \label{Lr}
L_r = \mathcal O_X(1, r).
\end{eqnarray}

Let $Y$ be a generic divisor of type $(2,2,n+1)$ in the product
\begin{eqnarray*}
Z= \mathbb P^1\times \mathbb P^1\times \mathbb P^n. 
\end{eqnarray*}
Then $Y$ is a smooth Calabi-Yau $(n+1)$-fold. 
By the Lefschetz hyperplane theorem, 
\begin{eqnarray*}
\Pic(Y) \cong \Pic( \mathbb P^1\times \mathbb P^1 \times
\mathbb P^n).
\end{eqnarray*}
Let $\pi_i$ be the projection from $\mathbb
P^1\times \mathbb P^1 \times \mathbb P^n$ to the $i$-th factor, 
and let
\begin{eqnarray} \label{pi}
\pi=(\pi_2\times \pi_3)|_Y: \,\, 
Y\to X= \mathbb P^1\times\mathbb P^n.
\end{eqnarray}
Put $\mathcal O_Y(a, b, c) = \pi_1^*\mathcal O_{\mathbb P^1}(a) 
\otimes \pi_2^*\mathcal O_{\mathbb P^1}(b) \otimes 
\pi_3^*\mathcal O_{\mathbb P^n}(c)|_Y$, and
\begin{eqnarray}         \label{LrY}
L_r^Y = \mathcal O_Y(0, 1, r) = \pi^*L_r.
\end{eqnarray}
Then the projection $\pi: Y\to X= \mathbb P^1\times\mathbb P^n$
is a ramified double covering with the ramification locus 
$B\subset X$ being a smooth divisor of type $(4, 2n+2)$. 
In particular, 
\begin{eqnarray} \label{push}
\pi_*\mathcal O_Y \cong \mathcal O_X \oplus \mathcal O_X(-2,
-n-1).
\end{eqnarray}
By the projection formula, if $b < (n+1)$, then we obtain
\begin{eqnarray}  \label{H1Y}
       H^1(Y, \pi^*\mathcal O_X(a,b)) 
&\cong&H^1(X, \mathcal O_X(a,b) \otimes \pi_*\mathcal O_Y)  
       \nonumber   \\
&\cong&H^1(X, \mathcal O_X(a,b) \oplus \mathcal O_X(a-2, b-n-1))
       \nonumber   \\
&\cong&H^1(X, \mathcal O_X(a,b)).
\end{eqnarray}

Fix $\ep_1, \ep_2 = 0, 1$, and fix a point $y_0 \in Y$. 
For $m \in \Z$, define
\begin{eqnarray}  \label{cm}
   {\bf c}_m 
&=&-m[y_0] + \big ( 1+\pi^*(-1, 1) \big ) \cdot 
   \big ( 1+\pi^*(\ep_1+1, \ep_2-1) \big )  \nonumber   \\
&=&\big ( 1-m[y_0] \big ) \cdot \big ( 1+\pi^*(-1, 1) \big ) 
   \cdot \big ( 1+\pi^*(\ep_1+1, \ep_2-1) \big ) \in A^*(Y).
   \qquad
\end{eqnarray}
Our first goal is to study the Gieseker moduli space 
$\overline{\mathfrak M}_{L_r^Y}({\bf c}_m)$.
When $m = 0$, the structure of the moduli space 
$\overline{\mathfrak M}_{L_r^Y}({\bf c}_0)$ has been 
determined in \cite{LQ2} (for convenience, we adopt the convention
that $e/0 = +\infty$ when $e > 0$):

\begin{lemma} \label{thm4.6}
{\rm (Theorem 4.6 in \cite{LQ2})}
Let $k=\displaystyle{(1+\ep_1){n+2-\ep_2 \choose n}-1}$. 
\begin{enumerate}
\item[{\rm (i)}] When $0<r < n(2-\ep_2)/(2+\ep_1)$, the moduli
space $\overline{\mathfrak M}_{L_r^Y}({\bf c}_0)$ is empty;

\item[{\rm (ii)}] When $n(2-\ep_2)/(2+\ep_1) < r <
n(2-\ep_2)/\ep_1$, $\overline{\mathfrak M}_{L_r^Y}({\bf c}_0)$ is
isomorphic to $\Pee^k$ and consists of all the bundles $E_0$ 
sitting in nonsplitting extensions:
\begin{eqnarray}   \label{thm4.6.1}
0 \rightarrow \mathcal O_Y(0, -1, 1) \rightarrow E_0 \rightarrow
\mathcal O_Y(0, \ep_1+1, \ep_2-1) \rightarrow 0.
\end{eqnarray}
Moreover, all these rank-$2$ bundles $E_0$ are $L_r^Y$-stable.
\end{enumerate}
\end{lemma}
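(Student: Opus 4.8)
The plan is to follow the standard route for classifying rank-$2$ sheaves with prescribed Chern data on a Calabi--Yau variety: exhibit the family of extension bundles in~(\ref{thm4.6.1}), prove that it exhausts $\overline{\mathfrak M}_{L_r^Y}({\bf c}_0)$, and then identify the scheme structures. Set $D_1 = \mathcal O_Y(0,-1,1) = \pi^*\mathcal O_X(-1,1)$ and $D_2 = \mathcal O_Y(0,\ep_1+1,\ep_2-1) = \pi^*\mathcal O_X(\ep_1+1,\ep_2-1)$, so that $c(D_1)\,c(D_2) = {\bf c}_0$; then every $E \in \overline{\mathfrak M}_{L_r^Y}({\bf c}_0)$ has $c_1(E) = \pi^*(\ep_1,\ep_2)$, $c_3(E) = 0$, and a fixed $c_2$. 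First I would note that, after saturating and passing to double duals, any rank-$1$ sub- or quotient sheaf occurring in the analysis is a twist $\mathcal O_Y(0,b,c) = \pi^*\mathcal O_X(b,c)$, and that a direct intersection computation on $X = \Pee^1\times\Pee^n$ turns every slope inequality into a linear inequality in $r$ after dividing by $r^{n-1}$; the critical values $n(2-\ep_2)/(2+\ep_1)$ and $n(2-\ep_2)/\ep_1$ should emerge from this bookkeeping.

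The cohomological input is the decomposition $H^i\big(Y,\mathcal O_Y(0,b,c)\big) \cong H^i\big(X,\mathcal O_X(b,c)\big) \oplus H^i\big(X,\mathcal O_X(b-2,c-n-1)\big)$, coming from the projection formula and~(\ref{push}); on $X = \Pee^1\times\Pee^n$ with $n \ge 2$ only the $\Pee^1$-factor contributes to $H^1$ by K\"unneth. Feeding in $(b,c) = (-\ep_1-2,\,2-\ep_2)$ gives $\Hom(D_2,D_1) = 0$ and $\dim\Ext^1(D_2,D_1) = (1+\ep_1){n+2-\ep_2 \choose n} = k+1$, so the non-split extensions~(\ref{thm4.6.1}) are parametrized by $\Pee\big(\Ext^1(D_2,D_1)\big) \cong \Pee^k$, with proportional classes yielding isomorphic bundles (as $\Hom(D_2,D_1) = 0$) and non-proportional ones distinct. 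Once the members of this family are known to be $L_r^Y$-stable, pulling the universal extension back over $\Pee^k\times Y$ gives a flat family, hence a morphism $f\colon \Pee^k \to \overline{\mathfrak M}_{L_r^Y}({\bf c}_0)$.

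For $L_r^Y$-stability of an extension bundle $E_0$, I would take a saturated rank-$1$ subsheaf $\mathcal O_Y(0,b,c)\hookrightarrow E_0$ and compose with $E_0 \to D_2$: if the composite vanishes then $\mathcal O_Y(0,b,c)\subseteq D_1$ and the point reduces to $\mu_{L_r^Y}(D_1) < \mu_{L_r^Y}(E_0)$, i.e. $r > n(2-\ep_2)/(2+\ep_1)$; if the composite is non-zero then $b \le \ep_1+1$ and $c \le \ep_2-1$, the case $\mathcal O_Y(0,b,c) = D_2$ is ruled out by non-splitting, and the finitely many remaining twists are eliminated by bounding $\Hom(\mathcal O_Y(0,b,c),E_0)$ via the extension class, which is where $r < n(2-\ep_2)/\ep_1$ is needed. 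For the converse, given a Gieseker $L_r^Y$-semistable $E$ with $c(E) = {\bf c}_0$, I would compute $\chi\big(Y,E\otimes\mathcal O_Y(0,1,-1)\big)$ by Hirzebruch--Riemann--Roch and combine it with Serre duality ($\omega_Y = \mathcal O_Y$) and the standard vanishing for twisted semistable sheaves to force $\Hom(D_1,E) \ne 0$ in the range $n(2-\ep_2)/(2+\ep_1) < r < n(2-\ep_2)/\ep_1$; a non-zero map $D_1 \to E$ then has saturation equal to $D_1$ itself (by semistability together with the slope formula, again using the upper bound on $r$), with torsion-free quotient $D_2\otimes I_Z$, and matching $c_2(E)$ and $c_3(E) = 0$ against $c(D_1)\,c(D_2\otimes I_Z)$ forces $Z = \emptyset$; hence $E$ sits in~(\ref{thm4.6.1}), is non-split (a split $D_1\oplus D_2$ is not semistable in this range), and is locally free. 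In the complementary range $0 < r < n(2-\ep_2)/(2+\ep_1)$ the same argument still produces $D_1\hookrightarrow E$, but now $\mu_{L_r^Y}(D_1) > \mu_{L_r^Y}(E)$, contradicting semistability; this gives part~(i).

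It remains to upgrade $f$ to an isomorphism. From~(\ref{thm4.6.1}) and the cohomology decomposition above one computes $\Hom(E_0,E_0) = \C$ and $\dim\Ext^1(E_0,E_0) = k$ for every $E_0$, so the moduli scheme has tangent space of dimension $k$ at each point of the image of $f$; since $f$ is bijective by the previous paragraph and $\Pee^k$ is smooth of dimension $k$, checking that the Kodaira--Spencer map of the universal extension is an isomorphism makes $f$ a bijective morphism between smooth varieties of the same dimension with isomorphic tangent maps, hence an isomorphism, which gives part~(ii). The hard part will be the converse direction together with the upper-wall estimate: one must control the cohomology of arbitrary twists of a semistable sheaf finely enough to locate $D_1$, to exclude every competing destabilizing line bundle, and to rule out a non-trivial correction $Z$; it is exactly this that pins the allowed interval down to $\big(n(2-\ep_2)/(2+\ep_1),\,n(2-\ep_2)/\ep_1\big)$ and forms the technical core of the argument.
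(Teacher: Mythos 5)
This lemma is not proved in the paper at all: it is imported from \cite{LQ2}, and the closest in-paper argument is the proof of Lemma~\ref{lem_ep}, which is explicitly a modification of the $m=0$ proof in \cite{LQ2}. Measured against that route, your sketch has a genuine gap exactly at the step you call the technical core, namely producing the subsheaf $\mathcal O_Y(0,-1,1)\hookrightarrow E$ for an arbitrary semistable $E$. You propose to get $\Hom(\mathcal O_Y(0,-1,1),E)\ne 0$ from Hirzebruch--Riemann--Roch plus Serre duality plus ``standard vanishing for twisted semistable sheaves.'' But Serre duality and stability only control $H^3\big(Y,E\otimes\mathcal O_Y(0,1,-1)\big)\cong \Hom\big(E,\mathcal O_Y(0,-1,1)\big)^*$; there is no vanishing available for $H^2\cong \Ext^1\big(E,\mathcal O_Y(0,-1,1)\big)^*$, so even a positive Euler characteristic only gives $h^0+h^2>0$. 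Worse, the Euler characteristic need not be positive: if $E$ sits in (\ref{thm4.6.1}) then $\chi\big(Y,E\otimes\mathcal O_Y(0,1,-1)\big)=\chi(\mathcal O_Y)+\chi\big(\mathcal O_Y(0,\ep_1+2,\ep_2-2)\big)$, which vanishes for instance when $n=2$, $\ep_2=1$, so the numerical input cannot force a section. The mechanism actually used (in \cite{LQ2} and mirrored in Lemma~\ref{lem_ep}) is different: since $(4c_2-c_1^2)\cdot c_1(L^Y_{r_0})^{n-1}<0$ for $0<r_0<(2-\ep_2)(n-1)/(2(2+\ep_1))$, the Bogomolov inequality makes $E$ slope-unstable for the auxiliary polarization $L^Y_{r_0}$; the resulting destabilizing rank-one subsheaf $\mathcal O_Y(a,b,c)\otimes I_{Z_1}$ is then pinned down to $(a,b,c)=(0,-1,1)$ (with $Z_1=Z_2=\emptyset$ when $m=0$) by playing the $r_0$-destabilizing inequality against $L_r^Y$-semistability together with the effectivity constraints on the $c_2$-contribution, and this same bookkeeping produces both walls $n(2-\ep_2)/(2+\ep_1)$ and $n(2-\ep_2)/\ep_1$ and part (i). Nothing in your sketch replaces this step.

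A second, smaller but real issue: you restrict all sub- and quotient line bundles a priori to the form $\mathcal O_Y(0,b,c)=\pi^*\mathcal O_X(b,c)$. By Lefschetz, line bundles on $Y$ are $\mathcal O_Y(a,b,c)$ with arbitrary $a$, and eliminating $a\ne 0$ is part of the argument (in Lemma~\ref{lem_ep} it is done through the inequalities (\ref{lem_ep.8})--(\ref{lem_ep.10})); on the stability side of your argument, potential destabilizers with $a\ne 0$ mapping nontrivially to $\mathcal O_Y(0,\ep_1+1,\ep_2-1)$ must likewise be excluded, not assumed away. The remaining ingredients of your plan are fine and standard: the computation $\dim\Ext^1\big(\mathcal O_Y(0,\ep_1+1,\ep_2-1),\mathcal O_Y(0,-1,1)\big)=(1+\ep_1){n+2-\ep_2 \choose n}=k+1$ via (\ref{push})/(\ref{H1Y}), the $\Pee^k$ of non-split extensions, the Hom-bounding via the extension class for the finitely many potentially destabilizing twists, and the identification of $\overline{\mathfrak M}_{L_r^Y}({\bf c}_0)$ with $\Pee^k$ through the universal extension plus a tangent-space comparison; but as written the proposal does not yield the classification statement that every semistable sheaf with total Chern class ${\bf c}_0$ arises from (\ref{thm4.6.1}).
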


Let $n(2-\ep_2)/(2+\ep_1) < r < n(2-\ep_2)/\ep_1$, 
and $E_0 \in \overline{\mathfrak M}_{L_r^Y}({\bf c}_0)$.
Consider the Quot-scheme $\Quot^m_{E_0}$.
Let $E \in \Quot^m_{E_0}$. Then we have an exact sequence:
\begin{eqnarray}     \label{m0Q}
0 \rightarrow E \rightarrow E_0 \rightarrow
Q \rightarrow 0
\end{eqnarray}
where $Q$ is supported at finitely many points and $h^0(Y, Q) = m$.
Note that
\begin{eqnarray*}
c(E) = c(E_0)/c(Q) = {\bf c}_0/(1+2m[y_0]) 
= -2m[y_0] + {\bf c}_0 = {\bf c}_{2m}.
\end{eqnarray*}
Also, $E$ is $L_r^Y$-stable since $E_0$ is $L_r^Y$-stable. Hence 
\begin{eqnarray}     \label{EmIn}
E \in \overline{\mathfrak M}_{L_r^Y}({\bf c}_{2m}).
\end{eqnarray}
In the following, we show that the converse also holds, i.e.,
every element in $\overline{\mathfrak M}_{L_r^Y}({\bf c}_{2m})$
is contained in $\Quot^m_{E_0}$ for some 
$E_0 \in \overline{\mathfrak M}_{L_r^Y}({\bf c}_0)$.

\begin{lemma}    \label{lem_ep}
Let $n \ge 2$, $\ep_1, \ep_2 = 0, 1$ and $r < n(2-\ep_2)/\ep_1$.
Let $E \in \overline{\mathfrak M}_{L_r^Y}({\bf c}_{m})$.
\begin{enumerate}
\item[{\rm (i)}] 
Then, $r \ge n(2-\ep_2)/(2+\ep_1)$ and $E$ sits in an extension
\begin{eqnarray}\label{lem_ep.1}
0 \rightarrow \mathcal O_Y(0, -1, 1) \otimes I_{Z_1} \rightarrow E 
\rightarrow \mathcal O_Y(0, \ep_1+1, \ep_2-1) \otimes I_{Z_2} 
\rightarrow 0
\end{eqnarray}
for some $0$-dimensional closed subschemes $Z_1$ and $Z_2$ of $Y$
satisfying
\begin{eqnarray*}
m = 2 \big (\ell(Z_1) + \ell(Z_2) \big );
\end{eqnarray*}

\item[{\rm (ii)}]
Moreover, if $r > n(2-\ep_2)/(2+\ep_1)$, then the above extension
does not split.
\end{enumerate}
\end{lemma}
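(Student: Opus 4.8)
The plan is to mirror the proof of Theorem~4.6 of \cite{LQ2} (Lemma~\ref{thm4.6}), exploiting the fact that, for $E\in\overline{\mathfrak M}_{L_r^Y}({\bf c}_m)$, the total Chern classes ${\bf c}_m$ and ${\bf c}_0$ agree in every degree below the top one; in particular $c_1(E)=\pi^*(\ep_1,\ep_2)$ and $c_2(E)=c_2({\bf c}_0)$, which are exactly the Chern data used in \cite{LQ2}. First I would pass to $E^{**}$: it is a reflexive rank-$2$ sheaf with $c_1(E^{**})=\pi^*(\ep_1,\ep_2)$, it is (slope) $L_r^Y$-semistable because $E$ is and the cokernel of $E\hookrightarrow E^{**}$ is supported in codimension $\ge 2$, and $c_2(E^{**})\le c_2({\bf c}_0)$ since that cokernel contributes a non-negative class to $c_2(E)$.

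The heart of the argument is to produce an inclusion $\mathcal O_Y(0,-1,1)\hookrightarrow E^{**}$, i.e.\ a nonzero section of $E^{**}\otimes\mathcal O_Y(0,1,-1)$. Its Euler characteristic is computed by Hirzebruch--Riemann--Roch from $c_1$, $c_2$, the splitting (\ref{push}) of $\pi_*\mathcal O_Y$ and the vanishing (\ref{H1Y}), exactly as in \cite{LQ2}, and comes out positive; to deduce $H^0\ne 0$ one kills the higher cohomology, using Serre duality on the Calabi--Yau $Y$ (which identifies $H^{n+1}(Y,E^{**}\otimes\mathcal O_Y(0,1,-1))$ with $H^0(Y,(E^{**})^\vee\otimes\mathcal O_Y(0,-1,1))^\vee$, vanishing by semistability of $(E^{**})^\vee$) and the vanishings of \cite{LQ2}. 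A direct intersection-number computation on $\mathbb P^1\times\mathbb P^n$ (using $\deg\pi=2$) gives $\mu_{L_r^Y}(\mathcal O_Y(0,-1,1))=2r^{n-1}(n-r)$ and $\mu_{L_r^Y}(E)=r^{n-1}(\ep_1 r+n\ep_2)$; semistability of $E^{**}$ applied to the resulting sub-line-bundle, together with $r<n(2-\ep_2)/\ep_1$, forces the section to have no divisorial zeros and yields $r\ge n(2-\ep_2)/(2+\ep_1)$. Since $c_2(E^{**})\le c_2({\bf c}_0)$, the codimension-$2$ part of the zero scheme of the section must also vanish, and since $E^{**}$ is reflexive the non-negativity of $c_3$ for a rank-$2$ reflexive sheaf kills the remaining $0$-dimensional part; hence $\mathcal O_Y(0,-1,1)\hookrightarrow E^{**}$ is a subbundle inclusion, $E^{**}$ is locally free, and it sits in a non-split (see (ii) below) extension $0\to\mathcal O_Y(0,-1,1)\to E^{**}\to\mathcal O_Y(0,\ep_1+1,\ep_2-1)\to 0$. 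By Lemma~\ref{thm4.6}(ii), $E^{**}\in\overline{\mathfrak M}_{L_r^Y}({\bf c}_0)$, i.e.\ $E^{**}$ is one of the bundles $E_0$.

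From $c_2(E)=c_2(E^{**})$ the cokernel $Q=E^{**}/E$ has no codimension-$2$ part, so $Q$ is $0$-dimensional and $E\in\Quot^{\ell(Q)}_{E_0}$. Intersecting the sub-line-bundle $\mathcal O_Y(0,-1,1)\subset E_0$ with $E$ gives $\mathcal O_Y(0,-1,1)\cap E=\mathcal O_Y(0,-1,1)\otimes I_{Z_1}$ with $Z_1$ $0$-dimensional, while $E/(\mathcal O_Y(0,-1,1)\cap E)$ embeds into $E_0/\mathcal O_Y(0,-1,1)=\mathcal O_Y(0,\ep_1+1,\ep_2-1)$, hence equals $\mathcal O_Y(0,\ep_1+1,\ep_2-1)\otimes I_{Z_2}$ with $Z_2$ $0$-dimensional; this is the extension (\ref{lem_ep.1}), with $\ell(Z_1)+\ell(Z_2)=\ell(Q)$. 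Computing $c(E)=c(E_0)/c(Q)$ then gives ${\bf c}_m={\bf c}_{2\ell(Q)}$, so $m=2\big(\ell(Z_1)+\ell(Z_2)\big)$. For (ii): if $r>n(2-\ep_2)/(2+\ep_1)$ and (\ref{lem_ep.1}) split, then $\mathcal O_Y(0,\ep_1+1,\ep_2-1)\otimes I_{Z_2}$ would be a subsheaf of $E$ of slope $\mu_{L_r^Y}(\mathcal O_Y(0,\ep_1+1,\ep_2-1))=2r^{n-1}\big((\ep_1+1)r+n(\ep_2-1)\big)$, which exceeds $\mu_{L_r^Y}(E)=r^{n-1}(\ep_1 r+n\ep_2)$ exactly when $r>n(2-\ep_2)/(2+\ep_1)$, contradicting semistability.

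I expect the main obstacle to be the production of the section in the second step: making the Hirzebruch--Riemann--Roch estimate come out positive and, above all, establishing the vanishing of the higher cohomology and the absence of a divisorial zero component. This is precisely where the full strength of the numerical hypotheses on $r$ enters (and where the moduli space becomes empty when $r<n(2-\ep_2)/(2+\ep_1)$), and it has to be carried out along the lines of the proof of Theorem~4.6 in \cite{LQ2}; everything else is bookkeeping with Chern classes and slopes.
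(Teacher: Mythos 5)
Your overall architecture (pass to $E^{**}$, identify it with a nonsplit extension of $\mathcal O_Y(0,\ep_1+1,\ep_2-1)$ by $\mathcal O_Y(0,-1,1)$, then intersect the sub-line-bundle with $E$) differs from the paper's, and the bookkeeping at the end --- the slope computations, the deduction $r\ge n(2-\ep_2)/(2+\ep_1)$ from semistability, the identity $m=2(\ell(Z_1)+\ell(Z_2))$, and the non-splitting in (ii) --- is sound. But the step you yourself flag as the main obstacle is a genuine gap, not a detail to be filled in ``as in \cite{LQ2}''. On the $(n+1)$-fold $Y$ (already for $n=2$) positivity of $\chi\big(E^{**}\otimes\mathcal O_Y(0,1,-1)\big)$ together with Serre duality and stability only kills the top group $H^{n+1}$; since $h^0=\chi+h^1-h^2+\cdots$, you must also control the intermediate cohomology, e.g.\ $H^2\big(Y,E^{**}(0,1,-1)\big)\cong H^1\big(Y,E^{**}(0,-\ep_1-1,1-\ep_2)\big)^*$ when $n=2$, and neither semistability nor any Kodaira-type vanishing applies to such twists of an \emph{unknown} semistable sheaf --- its structure is precisely what is being proved. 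So the crucial inclusion $\mathcal O_Y(0,-1,1)\hookrightarrow E^{**}$ never gets off the ground; note also that the paper models its argument on Lemma~4.2 of \cite{LQ2} (the $m=0$ case of this very lemma), not on a section-producing proof of Theorem~4.6, so there is no vanishing there to quote.

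The paper avoids producing sections altogether. Since ${\bf c}_m$ and ${\bf c}_0$ have the same $c_1$ and $c_2$, one computes $(4c_2(E)-c_1(E)^2)\cdot c_1(L_{r_0}^Y)^{n-1}=2(2-\ep_2)r_0^{n-2}\big[2(2+\ep_1)r_0-(2-\ep_2)(n-1)\big]$, which is negative for small $r_0>0$; by the Bogomolov inequality $E$ is then slope-unstable with respect to the auxiliary polarization $L^Y_{r_0}$, so a destabilizing rank-one subsheaf $\mathcal O_Y(a,b,c)\otimes I_{Z_1}$ with torsion-free quotient $\mathcal O_Y(-a,\ep_1-b,\ep_2-c)\otimes I_{Z_2}$ exists with no cohomological input. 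The destabilizing inequality for $L^Y_{r_0}$, the semistability inequality for $L^Y_r$, and the effectivity of the $c_2(I_{Z_i})$ inside $c_2(E)$ then force $(a,b,c)=(0,-1,1)$ after a short case analysis in $\ep_1$; from that point on, your concluding Chern-class and slope arguments go through. To repair your proposal, replace the Riemann--Roch step by this variation-of-polarization instability argument (applied directly to $E$, or to $E^{**}$); as written, the proposal does not establish the lemma.
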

\begin{proof}
Our proof is slightly modified from the proof of Lemma~4.2 
in \cite{LQ2} which handles the case $m = 0$. 
Since $c_1(E) = \pi^*(\ep_1, \ep_2)$ and  
\begin{eqnarray*}
c_2(E)= \pi^* \big ((2+\ep_1-\ep_2)[p \times H]-
(1-\ep_2)[\Pee^1 \times H^2] \big ),
\end{eqnarray*}
$(4c_2(E) - c_1(E)^2) \cdot c_1(L_{r_0}^Y)^{n-1} = 
2(2-\ep_2)r_0^{n-2}[2(2+\ep_1)r_0 -(2-\ep_2)(n-1)]$.
By the Bogomolov Inequality, $E$ is $L_{r_0}^Y$-unstable if $0 < r_0 <
(2-\ep_2)(n-1)/(2(2+\ep_1))$. Fix such an $r_0$ with $r_0 < r$.
Then there exists an exact sequence
\begin{eqnarray}\label{lem_ep.3}
0 \rightarrow \mathcal O_Y(a, b, c)\otimes I_{Z_1} \rightarrow
E\rightarrow \mathcal O_Y(-a, \ep_1-b, \ep_2-c)\otimes I_{Z_2}
\rightarrow 0
\end{eqnarray}
such that $\mathcal O_Y(a, b, c)\otimes I_{Z_1}$ destablizes $E$
with respect to $L_{r_0}^Y$, where $Z_1$ and $Z_2$ are codimension
at least two subschemes of $Y$. Therefore, 
\begin{eqnarray*}
c_1(\mathcal O_Y(a, b, c)) \cdot c_1(L_{r_0}^Y)^n
> c_1(E) \cdot c_1(L_{r_0}^Y)^n/2.
\end{eqnarray*}
A straightforward computation shows that this 
can be simplified into
\begin{eqnarray}\label{lem_ep.4}
n[(2c-\ep_2) +(n+1)a] + (2a +2b-\ep_1)r_0 > 0.
\end{eqnarray}
On the other hand, since $E$ is $L_r^Y$-semistable, we must have
\begin{eqnarray}\label{lem_ep.5}
n[(2c-\ep_2) +(n+1)a] + (2a +2b-\ep_1)r \le 0.
\end{eqnarray}
Calculating the second Chern class from the exact sequence
(\ref{lem_ep.3}), we get
\begin{eqnarray} \label{lem_ep.6}
\mathcal O_Y(a, b, c) \cdot \mathcal O_Y(-a, \ep_1-b, \ep_2-c) \le
c_2(E)
\end{eqnarray}
since $c_2(I_{Z_1})$ and $c_2(I_{Z_2})$ are effective cycles.
Regarding (\ref{lem_ep.6}) as an inequality of cycles in $Z$ and
comparing the coefficients of $[p \times p\times H]$ and
$[p\times\mathbb P^1\times H^2]$ yield
\begin{eqnarray}
&&[2a+(2b-\ep_1)](2c-\ep_2)+(n+1)a(2b-\ep_1)
  \ge -(\ep_1+2)(2-\ep_2),  \qquad\quad  \label{lem_ep.8} \\
&&[(2c-\ep_2)+2(n+1)a](2c-\ep_2)
  \ge  (2-\ep_2)^2.  \label{lem_ep.10}
\end{eqnarray}

Since $0 < r_0 < r$, we see from (\ref{lem_ep.4}) and
(\ref{lem_ep.5}) that $(2c-\ep_2)+(n+1)a > 0$ and 
\begin{eqnarray}   \label{2a2b}
(2a+2b-\ep_1) < 0.
\end{eqnarray} 
By (\ref{lem_ep.10}), $(2c-\ep_2)+2(n+1)a$ and $(2c-\ep_2)$
have the same sign, and so must be both positive. In particular,
$c \ge 1$. By (\ref{lem_ep.8}),
\begin{eqnarray} \label{lem_ep.10.1}
(n+1)a(2b-\ep_1) &\ge& -[2a+(2b-\ep_1)](2c-\ep_2)-
(\ep_1+2)(2-\ep_2).
\end{eqnarray}
In the following, we consider the cases $\ep_1 = 0$ and 
$\ep_1 = 1$ separately.

Assume $\ep_1 = 0$. Using (\ref{lem_ep.10.1}) and (\ref{2a2b}), 
we obtain $(n+1)a(2b) \ge 0$.
Together with (\ref{2a2b}) one more time, this implies 
either $a<0$ and $b \le 0$, or $a=0$ and $b<0$. 
If $a<0$ and $b \le 0$, then we see from (\ref{lem_ep.8}) that 
\begin{eqnarray*}
     -(2-\ep_2) 
&\le&(a+b)(2c-\ep_2)+(n+1)ab   \\
&=  &a(2c-\ep_2) +[(2c-\ep_2)+(n+1)a]b \\
&\le&a(2c-\ep_2) \le -(2c-\ep_2) \\
&\le&-(2-\ep_2). 
\end{eqnarray*}
So $a = -1$ and $c=1$, contradicting to
$(2c-\ep_2)+(n+1)a \ge 1$ and $n \ge 2$. If $a=0$ and $b<0$, then
$b(2c-\ep_2) \ge -(2-\ep_2)$ by (\ref{lem_ep.8}). 
Since $b(2c-\ep_2) \le - (2c-\ep_2) \le -(2-\ep_2)$, 
we must have $b=-1$ and $c=1$. By (\ref{lem_ep.3}), we obtain
\begin{eqnarray*}
   c(E)
&=&c(\mathcal O_Y(0,-1,1) \otimes I_{Z_1}) \cdot 
   c(\mathcal O_Y(0,1,\ep_2-1) \otimes I_{Z_2}) \\
&=&\displaystyle{
   {c(\mathcal O_Y(0,-1,1)) \over c(\mathcal O_{Z_1}(0,-1,1))} \cdot 
   {c(\mathcal O_Y(0,1,\ep_2-1)) \over c(\mathcal O_{Z_2}(0,1,\ep_2-1))}
   }.
\end{eqnarray*}
Since $c(E) = {\bf c}_{m} = (1-m[y_0]) \cdot c(\mathcal O_Y(0,-1,1))
\cdot c(\mathcal O_Y(0,1,\ep_2-1))$, we get
\begin{eqnarray}   \label{z12y0}
c(\mathcal O_{Z_1}(0,-1,1)) \cdot c(\mathcal O_{Z_2}(0,1,\ep_2-1))
= {1 \over 1-m[y_0]} = 1 + m[y_0].
\end{eqnarray}
Thus $Z_1$ and $Z_2$ are $0$-dimensional. Hence (\ref{lem_ep.3}) 
becomes (\ref{lem_ep.1}), and $m = 2 \big (\ell(Z_1) + \ell(Z_2) \big )$.
Note from (\ref{lem_ep.5}) that 
$r \ge n(2-\ep_2)/2$. Moreover, if $r > n(2-\ep_2)/2$,
then (\ref{lem_ep.1}) does not split since 
$\mathcal O_Y(0,1,\ep_2-1) \otimes I_{Z_2}$ would destabilize $E$
with respect to $L_r^Y$, contradicting to the assumption
$E \in \overline{\mathfrak M}_{L_r^Y}({\bf c}_{m})$.

Next, assume $\ep_1 = 1$. We see from (\ref{lem_ep.10.1}) 
and (\ref{2a2b}) that
$$
(n+1)a(2b-1) \ge -(2a+2b-1)(2c-\ep_2)-3(2-\ep_2) \ge 1-6 = -5. 
$$
So $a(2b-1) \ge -1$ since $n \ge 2$. If $a(2b-1) = -1$, 
then we see from $2a+(2b- 1) < 0$ that $a = -1$ and $b = 1$. 
By (\ref{lem_ep.10.1}) again, we obtain $(2c-\ep_2) \le 
3(2-\ep_2)-(n+1) \le (5-n)$ contradicting to
$(2c-\ep_2)+2(n+1)a \ge 1$ and $n \ge 2$. Therefore, we must have
$a(2b-1) \ge 0$. Since $2a+(2b- 1) < 0$, we conclude that either
$a<0$ and $(2b- 1) \le 0$, or $a=0$ and $(2b- 1)<0$.
As in the previous paragraph, we see that $a = 0$, $b = 0$ or
$-1$. If $b = 0$, then we obtain from (\ref{lem_ep.5}) that 
$r \ge n(2c-\ep_2) \ge n(2-\ep_2)$ contradicting to our assumption 
that $r < n(2-\ep_2)$. Therefore, $b=-1$.
As in the previous paragraph again, we verify that $c = 1$, 
$Z_1$ and $Z_2$ are $0$-dimensional, 
(\ref{lem_ep.3}) becomes (\ref{lem_ep.1}),
$m = 2 \big (\ell(Z_1) + \ell(Z_2) \big )$,
and $r \ge n(2-\ep_2)/3$. Moreover, if $r > n(2-\ep_2)/3$,
then (\ref{lem_ep.1}) does not split.
\end{proof}

\begin{remark}  \label{rmk_ep1}
Let $n \ge 2$, and $\ep_1, \ep_2 = 0, 1$.
By Lemma~\ref{lem_ep}, the moduli space 
$\overline{\mathfrak M}_{L_r^Y}({\bf c}_{m})$
is empty if $0 < r < n(2-\ep_2)/(2+\ep_1)$. When
\begin{eqnarray*}
n(2-\ep_2)/(2+\ep_1) \le r < n(2-\ep_2)/\ep_1,
\end{eqnarray*}
the moduli space $\overline{\mathfrak M}_{L_r^Y}({\bf c}_{m})$
is empty if $m < 0$ or $m$ is odd.
\end{remark}

\begin{remark}  \label{rmk_ep2}
As evidenced in \cite{LQ2}, the critical value
$r = n(2-\ep_2)/(2+\ep_1)$ is equivalent to saying that
$L_r^Y$ lies on a certain wall in the ample cone of $Y$.
\end{remark}

\begin{remark}  \label{rmk_ep3}
Not every sheaf $E$ sitting in a nonsplitting extension
(\ref{lem_ep.1}) is contained in the moduli space 
$\overline{\mathfrak M}_{L_r^Y}({\bf c}_{m})$.
The reason is that we might have 
\begin{eqnarray*}
E^{**} \cong \mathcal O_Y(0, -1, 1) \oplus 
\mathcal O_Y(0, \ep_1+1, \ep_2-1),
\end{eqnarray*}
and then $E$ will have a subsheaf 
$\mathcal O_Y(0, \ep_1+1, \ep_2-1) \otimes I_Z$ 
(for some $0$-dimensional closed subscheme $Z$)
destablizing $E$ with respect to $L_r^Y$.
\end{remark}

\begin{lemma}    \label{E**}
Let $n \ge 2$. Let $\ep_1, \ep_2 = 0, 1$, and $m \ge 0$. 
Assume that
\begin{eqnarray*}
n(2-\ep_2)/(2+\ep_1) < r < n(2-\ep_2)/\ep_1. 
\end{eqnarray*}
\begin{enumerate}
\item[{\rm (i)}] 
If $E \in \overline{\mathfrak M}_{L_r^Y}({\bf c}_{2m})$,
then $E^{**} \in \overline{\mathfrak M}_{L_r^Y}({\bf c}_0)$,
$E \in \Quot^m_{E^{**}}$, and $E$ is $L_r^Y$-stable;

\item[{\rm (ii)}] 
We have $E \in \overline{\mathfrak M}_{L_r^Y}({\bf c}_{2m})$ 
if and only if $E \in \Quot^m_{E_0}$ for some rank-$2$ sheaf
$E_0 \in \overline{\mathfrak M}_{L_r^Y}({\bf c}_0)$.
Moreover, in this case, $E_0 \cong E^{**}$.
\end{enumerate}
\end{lemma}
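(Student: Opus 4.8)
The statement (ii) will follow from (i). For the ``if'' part of (ii), the computation in (\ref{m0Q})--(\ref{EmIn}) already shows that $E\in\Quot^m_{E_0}$ with $E_0\in\overline{\mathfrak M}_{L_r^Y}({\bf c}_0)$ implies $c(E)={\bf c}_{2m}$ and that $E$ is $L_r^Y$-stable because $E_0$ is; and one gets $E_0\cong E^{**}$ by dualizing the inclusion $E\hookrightarrow E_0$ (which yields $E_0^*\cong E^*$, since the dual map has no cokernel: $\mathcal{E}xt^1(\mathcal F,\mathcal O_Y)=0$ for every $0$-dimensional sheaf $\mathcal F$, as $\dim Y=n+1\ge 3$) and dualizing once more. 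The ``only if'' part, together with the identification $E_0\cong E^{**}$, is exactly (i) with the choice $E_0:=E^{**}$. So the content is (i), and the plan is to read the structure of $E^{**}$ off the description of $E$ in Lemma~\ref{lem_ep}.

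For (i), fix $E\in\overline{\mathfrak M}_{L_r^Y}({\bf c}_{2m})$. Since $n(2-\ep_2)/(2+\ep_1)<r<n(2-\ep_2)/\ep_1$, Lemma~\ref{lem_ep} applied to ${\bf c}_{2m}$ supplies an extension $0\to\mathcal O_Y(0,-1,1)\otimes I_{Z_1}\to E\to\mathcal O_Y(0,\ep_1+1,\ep_2-1)\otimes I_{Z_2}\to 0$ with $Z_1,Z_2$ $0$-dimensional and $\ell(Z_1)+\ell(Z_2)=m$. Dualizing this sequence twice --- using $\mathcal{H}om(I_Z,\mathcal O_Y)=\mathcal O_Y$, $\mathcal{E}xt^1(I_Z,\mathcal O_Y)\cong\mathcal{E}xt^2(\mathcal O_Z,\mathcal O_Y)=0$ for $0$-dimensional $Z$, and $\mathcal{E}xt^1(\mathcal L,\mathcal O_Y)=0$ for a line bundle $\mathcal L$ --- produces $0\to\mathcal O_Y(0,-1,1)\to E^{**}\to\mathcal O_Y(0,\ep_1+1,\ep_2-1)\to 0$. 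In particular $E^{**}$ is locally free with $c(E^{**})={\bf c}_0$, and a Chern-character count (only the degree-$3$ terms of $c(E)$ and $c(E^{**})$ differ) shows that $E^{**}/E$ is $0$-dimensional of length $m$, i.e.\ $E\in\Quot^m_{E^{**}}$.

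The crucial --- and I expect hardest --- step is to verify that the above extension of $E^{**}$ is non-split; once this is known, Lemma~\ref{thm4.6}(ii) identifies $E^{**}$ with a point of $\overline{\mathfrak M}_{L_r^Y}({\bf c}_0)\cong\mathbb P^k$, every member of which is $L_r^Y$-stable. Suppose instead $E^{**}\cong A\oplus B$ with $A=\mathcal O_Y(0,-1,1)$ and $B=\mathcal O_Y(0,\ep_1+1,\ep_2-1)$ (equivalent to the extension splitting, as $A\not\cong B$). The composite $E\hookrightarrow E^{**}\twoheadrightarrow A$ is nonzero (otherwise $E\subseteq B$, impossible since $\rank E=2$), so $K:=\ker(E\to A)$ is a rank-$1$ subsheaf of $E$, and additivity of $c_1$ in $0\to K\to E\to E/K\to 0$ together with $E/K\hookrightarrow A$ forces $c_1(K)=c_1(B)$. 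Comparing $c_1(B)\cdot c_1(L_r^Y)^n$ with $\tfrac{1}{2}\,c_1(E)\cdot c_1(L_r^Y)^n$ then shows $\mu_{L_r^Y}(K)>\mu_{L_r^Y}(E)$ precisely when $r>n(2-\ep_2)/(2+\ep_1)$ --- our standing hypothesis --- so $K$ destabilizes $E$ (in the slope, hence the Gieseker, sense), contradicting $E\in\overline{\mathfrak M}_{L_r^Y}({\bf c}_{2m})$. Hence the extension is non-split, $E^{**}\in\overline{\mathfrak M}_{L_r^Y}({\bf c}_0)$, and $E^{**}$ is $L_r^Y$-stable.

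Finally, $E$ is itself $L_r^Y$-stable: any proper nonzero (so rank-$1$) subsheaf $F\subset E$ lies inside $E^{**}$, and its saturation $\bar F$ there satisfies $c_1(\bar F)\ge c_1(F)$ while being a proper nonzero subsheaf of the stable bundle $E^{**}$, so $\mu_{L_r^Y}(F)\le\mu_{L_r^Y}(\bar F)<\mu_{L_r^Y}(E^{**})=\mu_{L_r^Y}(E)$; the same saturation argument also supplies the stability of $E$ used in the ``if'' part of (ii). The main obstacle, as indicated, is the non-splitting argument: the entire point of the hypothesis $r>n(2-\ep_2)/(2+\ep_1)$ is that it is the inequality forcing $\mu_{L_r^Y}(B)>\mu_{L_r^Y}(E)$ (the destabilizing mechanism anticipated in Remark~\ref{rmk_ep3}), so the slope bookkeeping there must be done precisely; everything else reduces to manipulations of double duals, Chern characters, and the local-$\mathcal{E}xt$ vanishing that rests on $\dim Y\ge 3$.
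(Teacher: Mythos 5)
Your proposal is correct and follows essentially the same route as the paper's proof: apply Lemma~\ref{lem_ep} to $E$, double-dualize to get the extension $0 \to \mathcal O_Y(0,-1,1) \to E^{**} \to \mathcal O_Y(0,\ep_1+1,\ep_2-1) \to 0$, rule out splitting because a split $E^{**}$ would give a subsheaf of the form $\mathcal O_Y(0,\ep_1+1,\ep_2-1)\otimes I_Z$ destabilizing $E$ for $r > n(2-\ep_2)/(2+\ep_1)$, then invoke Lemma~\ref{thm4.6}~(ii) together with a Chern class count and (\ref{EmIn}). The only quibble is your justification that $c_1(K)=c_1(B)$ (injectivity of $E/K\hookrightarrow A$ alone only gives $c_1(E/K)\le c_1(A)$; equality, or just the inequality, follows because the cokernel is a quotient of the $0$-dimensional sheaf $E^{**}/E$), and in either case the destabilization goes through, so this is harmless.
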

\begin{proof}
Note that (ii) follows from (i) and (\ref{EmIn}). For (i), 
we see from Lemma~\ref{lem_ep}~(i) that the double dual $E^{**}$ 
sits in an exact sequence:
\begin{eqnarray}        \label{E**.1}
0 \rightarrow \mathcal O_Y(0, -1, 1) \rightarrow E^{**}
\rightarrow \mathcal O_Y(0, \ep_1+1, \ep_2-1) \rightarrow 0.
\end{eqnarray}
In addition, we have the canonical exact sequence:
\begin{eqnarray}        \label{E**.2}
0 \rightarrow E \rightarrow E^{**} \rightarrow Q \rightarrow 0
\end{eqnarray}
where $Q$ is supported at finitely many points.
The exact sequence (\ref{E**.1}) does not split since otherwise, 
we see from (\ref{E**.2}) that $E$ would have a subsheaf 
\begin{eqnarray*}
\mathcal O_Y(0, \ep_1+1, \ep_2-1) \otimes I_Z
\end{eqnarray*}
(for some $0$-dimensional closed subscheme $Z$) destablizing $E$ 
with respect to $L_r^Y$. By Lemma~\ref{thm4.6}~(ii), 
$E^{**} \in \overline{\mathfrak M}_{L_r^Y}({\bf c}_0)$.
Calculating the Chern classes from (\ref{E**.2}), we conclude
that $h^0(Y, Q) = m$. Therefore, $E \in \Quot^m_{E^{**}}$.
By Lemma~\ref{thm4.6}~(ii) again, $E^{**}$ is $L_r^Y$-stable.
Hence the sheaf $E$ is $L_r^Y$-stable as well.
\end{proof}

By Lemma~\ref{thm4.6}~(ii) and a standard construction (see \cite{HS}),
there exists a universal vector bundle $\mathcal E_0$ over 
$\overline{\mathfrak M}_{L_r^Y}({\bf c}_0) \times Y$ which sits in
the exact sequence:
\begin{eqnarray}        \label{uni_shf}
0 \to \rho_2^*\mathcal O_Y(0, -1, 1) \to \mathcal E_0
\to \rho_2^*\mathcal O_Y(0, \ep_1+1, \ep_2-1) \otimes 
\rho_1^*\mathcal L \to 0
\end{eqnarray}
where $\rho_1$ and $\rho_2$ are the two natural projections
of $\overline{\mathfrak M}_{L_r^Y}({\bf c}_0) \times Y$,
and $\mathcal L$ stands for the line bundle 
$\mathcal O_{\Pee^k}(-1)$ on $\Pee^k \cong 
\overline{\mathfrak M}_{L_r^Y}({\bf c}_0)$. For simplicity, put
\begin{eqnarray}   \label{e0/}
\Quot^m_{\mathcal E_0/} :=
\Quot^m_{\mathcal E_0/\overline{\mathfrak M}_{L_r^Y}({\bf c}_0) 
\times Y/\overline{\mathfrak M}_{L_r^Y}({\bf c}_0)}.
\end{eqnarray}
Note that the fiber of the natural morphism 
$\Quot^m_{\mathcal E_0/} \to 
\overline{\mathfrak M}_{L_r^Y}({\bf c}_0)$ at a point
$E_0 \in \overline{\mathfrak M}_{L_r^Y}({\bf c}_0)$ is 
canonically identified with the Quot-scheme $\Quot^m_{E_0}$.

\begin{proposition}    \label{propII}
Let $n \ge 2$. Let $\ep_1, \ep_2 = 0, 1$, and $m \ge 0$. 
Assume that
\begin{eqnarray*}
n(2-\ep_2)/(2+\ep_1) < r < n(2-\ep_2)/\ep_1. 
\end{eqnarray*}
Then the moduli space 
$\overline{\mathfrak M}_{L_r^Y}({\bf c}_{2m})$
is isomorphic to the Quot-scheme $\Quot^m_{\mathcal E_0/}$.
\end{proposition}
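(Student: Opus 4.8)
The plan is to produce mutually inverse morphisms between $\overline{\mathfrak M}_{L_r^Y}({\bf c}_{2m})$ and $\Quot^m_{\mathcal E_0/}$, using Lemma~\ref{E**} as the fibrewise input and, at the level of families, the fineness of the Gieseker moduli space together with the universal property of the relative Quot-scheme. Throughout one uses that by Lemma~\ref{E**}(i) every sheaf parametrised by $\overline{\mathfrak M}_{L_r^Y}({\bf c}_{2m})$ is $L_r^Y$-stable, so that semistability plays no subtle role and the moduli functor is well behaved.

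Step one (the main step) is to construct a morphism $h\colon\overline{\mathfrak M}_{L_r^Y}({\bf c}_{2m})\to\overline{\mathfrak M}_{L_r^Y}({\bf c}_0)\cong\Pee^k$ by relative double dualisation. Let $\mathcal E$ be a universal sheaf on $\overline{\mathfrak M}_{L_r^Y}({\bf c}_{2m})\times Y$ and let $\mathcal G$ be its double dual taken fibrewise over $Y$. Using Lemma~\ref{E**}(i) --- each fibre $E^{**}$ is locally free of the type in Lemma~\ref{thm4.6}(ii), and $E\subset E^{**}$ has the constant colength $m$ --- one shows that $\mathcal G$ is flat over $\overline{\mathfrak M}_{L_r^Y}({\bf c}_{2m})$, that its formation commutes with base change, and that $\mathcal G_E\cong E^{**}$. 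Thus $\mathcal G$ is a flat family of bundles lying in $\overline{\mathfrak M}_{L_r^Y}({\bf c}_0)$, which yields $h$, together with an isomorphism $(h\times\Id_Y)^*\mathcal E_0\cong\mathcal G\otimes\mathcal N$ where $\mathcal N$ is a line bundle pulled back from $\overline{\mathfrak M}_{L_r^Y}({\bf c}_{2m})$ (the usual twist ambiguity of universal sheaves, harmless below).

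Step two is to construct $\Psi\colon\overline{\mathfrak M}_{L_r^Y}({\bf c}_{2m})\to\Quot^m_{\mathcal E_0/}$ over $h$: after replacing $\mathcal E$ by $\mathcal E\otimes\mathcal N$, which alters neither the induced moduli point nor any Quot-scheme in sight, the canonical inclusion $\mathcal E\hookrightarrow(h\times\Id_Y)^*\mathcal E_0$ has cokernel flat over the base and fibrewise of length $m$, so the universal property of the relative Quot-scheme furnishes $\Psi$. Step three is to construct $\Phi\colon\Quot^m_{\mathcal E_0/}\to\overline{\mathfrak M}_{L_r^Y}({\bf c}_{2m})$: on $\Quot^m_{\mathcal E_0/}\times Y$ the universal sequence $0\to\widetilde{\mathcal E}\to\mathrm{pr}^*\mathcal E_0\to\widetilde{\mathcal Q}\to 0$ has, by Lemma~\ref{E**}(i) applied fibrewise, $\widetilde{\mathcal E}$ a flat family of $L_r^Y$-stable rank-$2$ sheaves with total Chern class ${\bf c}_{2m}$; fineness of $\overline{\mathfrak M}_{L_r^Y}({\bf c}_{2m})$ then produces $\Phi$, and since $\widetilde{\mathcal Q}$ is fibrewise $0$-dimensional the relative double dual of $\widetilde{\mathcal E}$ is $\mathrm{pr}^*\mathcal E_0$, so $h\circ\Phi$ is the structural projection of the Quot-scheme. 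Finally $\Psi\circ\Phi$ and $\Phi\circ\Psi$ return the respective universal families to themselves up to the irrelevant twist, hence are the identity by the universal properties; this gives the claimed isomorphism, necessarily over $\overline{\mathfrak M}_{L_r^Y}({\bf c}_0)$ and matching the fibre over $E_0$ with $\Quot^m_{E_0}$.

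The hard part is Step one: proving that the relative double dual $\mathcal G$ is flat and base-change compatible, i.e.\ that the pointwise assignment $E\mapsto[E^{**}]$ is a morphism of schemes. This is exactly where the uniform conclusions of Lemma~\ref{E**}(i) --- local freeness of every $E^{**}$, of the rigid type classified by $\Pee^k$, together with the constancy of the colength --- are indispensable; concretely one reduces to the local assertion that a flat family of torsion-free sheaves on a smooth variety whose fibrewise double duals are all locally free of constant colength has flat relative double dual, which can be verified via the relative $\mathcal{E}xt$-sheaves of $\mathcal E$ and semicontinuity.
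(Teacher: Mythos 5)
Your strategy is the same as the paper's: the printed proof is only a pointer to ``an argument similar to the proof of Lemma~\ref{lma_Tho}'', whose substance is exactly your Steps two and three (the universal subsheaf family on $\Quot^m_{\mathcal E_0/}$ is a flat family of $L_r^Y$-stable sheaves with class ${\bf c}_{2m}$ by (\ref{EmIn}) and Lemma~\ref{E**}, and conversely every family is classified by a morphism to the Quot-scheme), and the family-level double dualisation you isolate in Step one, together with the twist ambiguity of $\mathcal E_0$, is precisely the content hidden in that phrase. Your sketch of flatness and base change for the relative double dual via relative $\mathcal{E}xt$-sheaves and the constancy of the colength is a reasonable way to supply it.

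There is, however, one genuine logical problem as written: Step one begins with ``a universal sheaf $\mathcal E$ on $\overline{\mathfrak M}_{L_r^Y}({\bf c}_{2m})\times Y$'', and your final step uses the universal property of $\overline{\mathfrak M}_{L_r^Y}({\bf c}_{2m})$ to conclude $\Phi\circ\Psi=\Id$. Both presuppose that this Gieseker moduli space is a \emph{fine} moduli space, which is not known at this point of the argument (e.g.\ for $\ep_1=\ep_2=0$ one has $c_1=\pi^*(0,0)=0$, so the usual numerical coprimality criterion for the existence of a universal family fails); in the paper fineness is a consequence of this very proposition, not an input, so as stated your argument is circular. The repair is exactly the paper's formulation of Lemma~\ref{lma_Tho}: run your Steps one and two not for a putative universal family but for an arbitrary flat family of Gieseker-stable sheaves with total Chern class ${\bf c}_{2m}$ over an arbitrary base $T$, obtaining first a morphism $T\to\Pee^k\cong\overline{\mathfrak M}_{L_r^Y}({\bf c}_0)$ from the relative double dual and then a classifying morphism $T\to\Quot^m_{\mathcal E_0/}$, unique up to the twist equivalence already built into the moduli functor. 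This shows that $\Quot^m_{\mathcal E_0/}$ with its universal subsheaf family represents the moduli functor; since $\overline{\mathfrak M}_{L_r^Y}({\bf c}_{2m})$ corepresents the same functor (all its sheaves being stable by Lemma~\ref{E**}(i)), the two are canonically isomorphic, and fineness follows rather than being assumed. Note also that your $\Phi$ in Step three only needs the coarse moduli property, so that half of the argument stands as written.
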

\begin{proof}
Follows immediately from Lemma~\ref{E**}~(ii), 
the universal property of Quot-schemes, 
and an argument similar to the proof of Lemma~\ref{lma_Tho}.
\end{proof}

\begin{proposition}    \label{prop_eII}
Let $n \ge 2$. Let $\ep_1, \ep_2 = 0, 1$, $m \ge 0$, and 
\begin{eqnarray*}
k=\displaystyle{(1+\ep_1){n+2-\ep_2 \choose n}-1}. 
\end{eqnarray*}
Assume that $n(2-\ep_2)/(2+\ep_1) < r < n(2-\ep_2)/\ep_1$.
Then, 
\begin{eqnarray}   \label{prop_eII.0} 
   \sum_{m \in \Z} \chi \big (
     \overline{\mathfrak M}_{L_r^Y}({\bf c}_m) \big ) \, q^m
= (k+1) \cdot \left ( \sum_{m=0}^{+\infty} P_n(m) q^{2m}
  \right )^{2 \cdot \chi(Y)}.
\end{eqnarray}
\end{proposition}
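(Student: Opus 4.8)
The plan is to imitate the proof of Proposition~\ref{prop_e} from the previous subsection, with the two-point moduli space $\{E_{0,1},E_{0,2}\}$ there replaced by the projective space $\Pee^k\cong\overline{\mathfrak M}_{L_r^Y}({\bf c}_0)$, the bundles $E_{0,i}$ replaced by the universal bundle $\mathcal E_0$ on $\Pee^k\times Y$, and the disjoint union $\Quot^m_{E_{0,1}}\coprod\Quot^m_{E_{0,2}}$ replaced by the relative Quot-scheme $\Quot^m_{\mathcal E_0/}$. All of the substantive input is already available in the excerpt, so the argument is essentially a bookkeeping assembly of Remark~\ref{rmk_ep1}, Proposition~\ref{propII}, formula (\ref{e_lf}), and Theorem~\ref{thm_Q}.

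First I would invoke Remark~\ref{rmk_ep1}: since $n(2-\ep_2)/(2+\ep_1)<r<n(2-\ep_2)/\ep_1$ lies in the range in which $\overline{\mathfrak M}_{L_r^Y}({\bf c}_m)$ is empty whenever $m<0$ or $m$ is odd, the left-hand side of (\ref{prop_eII.0}) collapses to $\sum_{m\ge 0}\chi\big(\overline{\mathfrak M}_{L_r^Y}({\bf c}_{2m})\big)\,q^{2m}$. Next, Proposition~\ref{propII} identifies $\overline{\mathfrak M}_{L_r^Y}({\bf c}_{2m})$ with the relative Quot-scheme $\Quot^m_{\mathcal E_0/}$ over $\overline{\mathfrak M}_{L_r^Y}({\bf c}_0)\times Y\to\overline{\mathfrak M}_{L_r^Y}({\bf c}_0)=\Pee^k$. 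To compute $\chi(\Quot^m_{\mathcal E_0/})$, I would apply (\ref{e_lf}) with base $B=\Pee^k$ and total space $\Pee^k\times Y$: since $\mathcal E_0$ is a locally free rank-$2$ sheaf on $\Pee^k\times Y$, this gives $e\big(\Quot^m_{\mathcal E_0/};s,t\big)=e\big(\Quot^m_{\mathcal O_{\Pee^k\times Y}^{\oplus 2}/\Pee^k\times Y/\Pee^k};s,t\big)$. Because $\mathcal O_{\Pee^k\times Y}^{\oplus 2}$ is pulled back from $Y$, the relative Quot-scheme on the right is the honest product $\Pee^k\times\Quot^m_{\mathcal O_Y^{\oplus 2}}$, so by (\ref{hodge_iii}) its virtual Hodge polynomial factors; setting $s=t=1$ and using (\ref{hodge_i}) yields $\chi(\Quot^m_{\mathcal E_0/})=(k+1)\cdot\chi\big(\Quot^m_{\mathcal O_Y^{\oplus 2}}\big)$. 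Finally, Theorem~\ref{thm_Q} with $r=2$ gives $\sum_{m\ge0}\chi\big(\Quot^m_{\mathcal O_Y^{\oplus 2}}\big)q^m=\big(\sum_{m\ge0}P_n(m)q^m\big)^{2\chi(Y)}$, and substituting $q\mapsto q^2$ and multiplying by $k+1$ produces exactly (\ref{prop_eII.0}).

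Since all the heavy lifting — the identification of the Gieseker moduli space with a Quot-scheme, the invariance of the virtual Hodge polynomial of a Quot-scheme under replacing a locally free sheaf by $\mathcal O^{\oplus r}$, and the torus-localization computation of $\chi$ of punctual Quot-schemes — has already been done, I do not expect a serious obstacle here. The only point that requires genuine care is the relative bookkeeping: one must check that (\ref{e_lf}) is being applied with base $\Pee^k$ (not $\mathrm{Spec}\,\C$), that $\Quot^m_{\mathcal O_{\Pee^k\times Y}^{\oplus 2}/\Pee^k\times Y/\Pee^k}$ really is the trivial product $\Pee^k\times\Quot^m_{\mathcal O_Y^{\oplus 2}}$ so that the factor $\chi(\Pee^k)=k+1$ comes out cleanly, and that the $q\mapsto q^2$ reindexing forced by ${\bf c}_{2m}$ is tracked consistently through all four equalities.
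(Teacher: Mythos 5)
Your proposal is correct and follows essentially the same route as the paper's proof: reduce to even $m$ via Remark~\ref{rmk_ep1}, identify $\overline{\mathfrak M}_{L_r^Y}({\bf c}_{2m})$ with $\Quot^m_{\mathcal E_0/}$ via Proposition~\ref{propII}, replace $\mathcal E_0$ by the trivial rank-$2$ sheaf using (\ref{e_lf}) over the base $\Pee^k$, recognize the resulting relative Quot-scheme as $\Pee^k\times\Quot^m_{\mathcal O_Y^{\oplus 2}}$, and conclude with (\ref{hodge_iii}), (\ref{hodge_i}) and Theorem~\ref{thm_Q}. The only cosmetic difference is that the paper records the full virtual Hodge polynomial (with the factor $(1-(st)^{k+1})/(1-st)$) before setting $s=t=1$, whereas you specialize immediately.
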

\begin{proof}
Note from Remark~\ref{rmk_ep1} that $\overline{\mathfrak M}_{L_r^Y}({\bf c}_{m})
= \emptyset$ if $m < 0$ or $m$ is odd. So
\begin{eqnarray}   \label{prop_eII.1} 
   \sum_{m \in \Z} \chi \big (
     \overline{\mathfrak M}_{L_r^Y}({\bf c}_m) \big ) \, q^m
=    \sum_{m=0}^{+\infty} \chi \big (
     \overline{\mathfrak M}_{L_r^Y}({\bf c}_{2m}) \big ) \, q^{2m}.
\end{eqnarray}
By Proposition~\ref{propII}, $e \big (
\overline{\mathfrak M}_{L_r^Y}({\bf c}_{2m}); s, t \big )
= e \big (\Quot^m_{\mathcal E_0/}; s, t \big )$.
Since the rank-$2$ universal sheaf $\mathcal E_0$ over 
$\overline{\mathfrak M}_{L_r^Y}({\bf c}_0) \times Y$ is locally free,
we have 
\begin{eqnarray*}
e \big (\Quot^m_{\mathcal E_0/}; s, t \big )
= e \left (\Quot^m_{\mathcal O^{\oplus 2}_{
\overline{\mathfrak M}_{L_r^Y}({\bf c}_0) \times Y}/}; s, t \right )
\end{eqnarray*}
by (\ref{e_lf}). Note that there exists a canonical isomorphism
\begin{eqnarray*}
      \Quot^m_{\mathcal O^{\oplus 2}_{
        \overline{\mathfrak M}_{L_r^Y}({\bf c}_0) \times Y}/} 
\cong \Quot^m_{\mathcal O^{\oplus 2}_Y} \times 
        \overline{\mathfrak M}_{L_r^Y}({\bf c}_0)
\end{eqnarray*}
by the universal property of Quot-schemes. 
It follows from (\ref{hodge_iii}) that
\begin{eqnarray*}
   e \big (\overline{\mathfrak M}_{L_r^Y}({\bf c}_{2m}); s, t \big )
&=&e \left (\Quot^m_{\mathcal O^{\oplus 2}_Y} \times 
     \overline{\mathfrak M}_{L_r^Y}({\bf c}_0); s, t \right )    \\
&=&e \left (\Quot^m_{\mathcal O^{\oplus 2}_Y}; s, t \right )
   \cdot e \left ( \overline{\mathfrak M}_{L_r^Y}({\bf c}_0); 
   s, t \right ). 
\end{eqnarray*}
Since $\overline{\mathfrak M}_{L_r^Y}({\bf c}_0) \cong \Pee^k$
and $e(\Pee^k; s, t)= (1-(st)^{k+1})/(1-st)$, we get
\begin{eqnarray}   \label{cor_e.1} 
  e \big (\overline{\mathfrak M}_{L_r^Y}({\bf c}_{2m}); s, t \big )
= {1-(st)^{k+1} \over 1-st} \cdot 
  e \left (\Quot^m_{\mathcal O^{\oplus 2}_Y}; s, t \right ). 
\end{eqnarray}
Setting $s=t=1$, we see that (\ref{prop_eII.0}) follows from 
(\ref{prop_eII.1}), (\ref{hodge_i}) and Theorem~\ref{thm_Q}.
\end{proof}

\begin{lemma}    \label{affine}
Let $\mathcal E_0$ be the universal sheaf over 
$\overline{\mathfrak M}_{L_r^Y}({\bf c}_0) \times Y$ sitting in
the exact sequence (\ref{uni_shf}). Fix a point $y \in Y$. 
Let $U$ be an open affine neighborhood of $y$ such that 
$\mathcal O_Y(0, -1, 1)|_U \cong \mathcal O_U$ and $\mathcal O_Y(0, 
\ep_1+1, \ep_2-1)|_U \cong \mathcal O_U$. Then,
\begin{eqnarray*}
\mathcal E_0|_{\overline{\mathfrak M}_{L_r^Y}({\bf c}_0) \times U} \cong 
\rho_1^*\left ( \mathcal O_{\overline{\mathfrak M}_{L_r^Y}({\bf c}_0)} 
\oplus \mathcal L \right )
\end{eqnarray*}
where $\rho_1: \overline{\mathfrak M}_{L_r^Y}({\bf c}_0) \times U \to 
\overline{\mathfrak M}_{L_r^Y}({\bf c}_0)$ denotes the first projection. 
\end{lemma}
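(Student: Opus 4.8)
The plan is to restrict the defining exact sequence~(\ref{uni_shf}) of the universal bundle $\mathcal E_0$ to $M \times U$, where we abbreviate $M := \overline{\mathfrak M}_{L_r^Y}({\bf c}_0) \cong \Pee^k$, and then to show that what survives is a split extension. Restriction to an open subscheme is exact, and the hypotheses on $U$ supply isomorphisms $\rho_2^*\mathcal O_Y(0,-1,1)|_{M \times U} \cong \mathcal O_{M \times U}$ and $\rho_2^*\mathcal O_Y(0,\ep_1+1,\ep_2-1)|_{M \times U} \cong \mathcal O_{M \times U}$; hence~(\ref{uni_shf}) restricts to a short exact sequence
\[
0 \to \mathcal O_{M \times U} \to \mathcal E_0|_{M \times U} \to \rho_1^*\mathcal L \to 0 ,
\]
in which $\rho_1 \colon M \times U \to M$ denotes the projection. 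Since $\rho_1^*\mathcal L$ is a line bundle, this extension is classified by a class in $\Ext^1_{M \times U}(\rho_1^*\mathcal L,\mathcal O_{M \times U}) \cong H^1\big(M \times U, \rho_1^*\mathcal L^\vee\big)$, so it is enough to show that this group vanishes.

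The key step is that cohomology computation. Because $U$ is affine, it carries no higher cohomology with values in any quasi-coherent sheaf, so the Leray spectral sequence for the other projection $\rho_2 \colon M \times U \to U$, combined with cohomology and base change (note that $\rho_1^*\mathcal L^\vee$ is flat over $U$), gives
\[
H^1\big(M \times U, \rho_1^*\mathcal L^\vee\big) \;\cong\; H^0(U,\mathcal O_U) \otimes_{\C} H^1(M,\mathcal L^\vee) .
\]
Now $\mathcal L = \mathcal O_{\Pee^k}(-1)$, hence $\mathcal L^\vee \cong \mathcal O_{\Pee^k}(1)$ and $H^1(M,\mathcal L^\vee) = H^1(\Pee^k,\mathcal O_{\Pee^k}(1)) = 0$. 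Therefore the $\Ext$-group vanishes, the displayed sequence splits, and
\[
\mathcal E_0|_{M \times U} \;\cong\; \mathcal O_{M \times U} \oplus \rho_1^*\mathcal L \;=\; \rho_1^*\mathcal O_M \oplus \rho_1^*\mathcal L \;=\; \rho_1^*\big(\mathcal O_M \oplus \mathcal L\big),
\]
which is exactly the asserted isomorphism (with $\mathcal O_M = \mathcal O_{\overline{\mathfrak M}_{L_r^Y}({\bf c}_0)}$, using that pullback commutes with direct sums).

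I do not anticipate a serious obstacle; the only delicate point is the transition from cohomology on the non-proper scheme $M \times U$ to the vanishing statement on $\Pee^k$, and this is taken care of by the affineness of $U$ as above (equivalently, by the Künneth formula, which applies since $U$ is affine and $M$ is projective over $\C$).
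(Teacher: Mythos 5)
Your proof is correct and follows essentially the same route as the paper's: restrict the universal extension (\ref{uni_shf}) to $\overline{\mathfrak M}_{L_r^Y}({\bf c}_0)\times U$, reduce the splitting to the vanishing of $H^1$ of $\rho_1^*\mathcal L^{-1}$ on the product, and establish that vanishing via the Leray spectral sequence for the projection to $U$, using that $U$ is affine and that $H^1(\Pee^k,\mathcal O_{\Pee^k}(1))=0$. Your identification of the whole group as $H^0(U,\mathcal O_U)\otimes_\C H^1(\Pee^k,\mathcal O_{\Pee^k}(1))$ via flat base change is just a slightly more packaged version of the paper's low-degree Leray exact sequence.
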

\begin{proof}
Restricting the exact sequence (\ref{uni_shf}) to 
$\overline{\mathfrak M}_{L_r^Y}({\bf c}_0) \times U$ yields
$$
0 \to \mathcal O \to \mathcal E_0|_{\overline{\mathfrak M}_{L_r^Y}
({\bf c}_0) \times U} \to \rho_1^*\mathcal L \to 0
$$
where $\mathcal O$ denotes $\mathcal O_{\overline{\mathfrak M}_{L_r^Y}
({\bf c}_0) \times U}$. So it suffices to prove 
${\rm Ext}^1(\rho_1^*\mathcal L, \mathcal O) = 0$, i.e.,
\begin{eqnarray}        \label{affine.1}
H^1(\overline{\mathfrak M}_{L_r^Y}({\bf c}_0) \times U, 
\rho_1^*\mathcal L^{-1}) = 0.
\end{eqnarray}
Recall that $\overline{\mathfrak M}_{L_r^Y}({\bf c}_0) \cong \Pee^k$ 
and $\mathcal L = \mathcal O_{\Pee^k}(-1)$. 
Let $\w \rho_2: \Pee^k \times U \to U$ be the second projection.
By the Leray spectral sequence, we obtain an exact sequence
$$
0 \to H^1(U, (\w \rho_2)_*\rho_1^*\mathcal O_{\Pee^k}(1)) \to 
H^1(\Pee^k \times U, \rho_1^*\mathcal O_{\Pee^k}(1))
\to H^0(U, R^1(\w \rho_2)_*\rho_1^*\mathcal O_{\Pee^k}(1)).
$$
Since $U$ is affine, $H^1(U, (\w \rho_2)_*\rho_1^*\mathcal O_{\Pee^k}(1))
= 0$. Since $R^1(\w \rho_2)_*\rho_1^*\mathcal O_{\Pee^k}(1) = 0$,
we conclude that $H^1(\Pee^k \times U, \rho_1^*\mathcal O_{\Pee^k}(1))
= 0$. This verifies (\ref{affine.1}).
\end{proof}

Now let $n = 2$. Then $Y$ is a smooth Calabi-Yau $3$-fold with 
$H_1(Y, \Z) = 0$. For the fixed point $y_0 \in Y$, let $Y_0$ be an open 
affine neighborhood of $y_0$ such that both $\mathcal O_Y(0, -1, 1)|_{Y_0}$ 
and $\mathcal O_Y(0, \ep_1+1, \ep_2-1)|_{Y_0}$ are trivial. Define 
\begin{eqnarray*} 
   {\W F}_m 
&=&\{ E \in \Quot^m_{\mathcal E_0/}| \, E^{**}/E \text{\, is supported 
         at } y_0 \},    \\
   \Quot^m_{\mathcal E_0/}(Y_0) 
&=&\{ E \in \Quot^m_{\mathcal E_0/}| \, E^{**}/E \text{\, is supported 
   in } Y_0 \}.
\end{eqnarray*}
Then ${\W F}_m \subset \Quot^m_{\mathcal E_0/}(Y_0) \cong 
\Quot^m_{\rho_1^*\big (\mathcal O_{\Pee^k} \oplus \mathcal O_{\Pee^k}(-1) 
\big )/\Pee^k \times Y_0/\Pee^k}$ by Lemma~\ref{affine}.
Consider the embedding $\mathbb T = \C^* \hookrightarrow \mathbb T_2 
= (\C^*)^2 \subset \text{\rm Aut}\big (\mathcal O_{\Pee^k} \oplus 
\mathcal O_{\Pee^k}(-1) \big )$ via $t \mapsto (1, t)$ and 
the induced $\mathbb T$-action on $\Quot^m_{\rho_1^*\big (\mathcal O_{\Pee^k} 
\oplus \mathcal O_{\Pee^k}(-1) \big )/\Pee^k \times Y_0/\Pee^k}$. 
As in Lemma~\ref{fixed_pt_str} and (\ref{td-T-fixed}), 
\begin{eqnarray}    \label{Y0II} 
\left ( \Quot^m_{\rho_1^*\big (\mathcal O_{\Pee^k} \oplus \mathcal O_{\Pee^k}(-1) 
\big )/\Pee^k \times Y_0/\Pee^k} \right )^{\mathbb T} \,\, \cong
\,\, \coprod_{i = 0}^m \Pee^k \times \Hilb^i(Y_0) \times \Hilb^{m-i}(Y_0).
\end{eqnarray}
An argument similar to the proof of Lemma~\ref{td-punct} proves that
\begin{eqnarray}        \label{td-punctII}
\chi({\W F}_m, {\W \nu}_m) = (-1)^k \cdot \chi({\W F}_m)
\end{eqnarray}
where ${\W \nu}_m$ is the restriction of Behrend's function
$\nu_{\Quot^m_{\mathcal E_0/}}$ to ${\W F}_m$.

\begin{theorem}  \label{td-theoremII}
Let $Y \subset \mathbb P^1\times \mathbb P^1\times \mathbb P^2$ be
a generic smooth Calabi-Yau hypersurface.
Let $\ep_1, \ep_2 = 0, 1$, and $k=(1+\ep_1)(4-\ep_2)(3-\ep_2)/2-1$.
Let $\pi: Y \to \mathbb P^1\times \mathbb P^2$ be the restriction 
to $Y$ of the projection of $\mathbb P^1\times \mathbb P^1 \times 
\mathbb P^2$ to the product of the last two factors.
Fix a point $y_0 \in Y$, and define in $A^*(Y)$ the class:
\begin{eqnarray*}
{\bf c}_m = -m[y_0] + \big ( 1+\pi^*(-1, 1) \big ) \cdot 
   \big ( 1+\pi^*(\ep_1+1, \ep_2-1) \big ).
\end{eqnarray*}
\begin{enumerate}
\item[{\rm (i)}] If $0 < r < 2(2-\ep_2)/(2+\ep_1)$,
then $\lambda(L_r^Y, {\bf c}_{m}) = 0$ for all $m \in \Z$.

\item[{\rm (ii)}] If $2(2-\ep_2)/(2+\ep_1) < r < 2(2-\ep_2)/\ep_1$, then
\begin{eqnarray}      \label{td-theoremII.0}
  \sum_{m \in \Z} \lambda(L_r^Y, {\bf c}_m) q^m
= (-1)^{k} \cdot (k+1) \cdot M(q^2)^{2 \, \chi(Y)}.
\end{eqnarray}
\end{enumerate}
\end{theorem}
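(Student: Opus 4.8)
The plan is to deduce part~(i) immediately from an emptiness statement already proved, and to obtain part~(ii) by transcribing the proof of Theorem~\ref{td-theorem}, with Lemma~\ref{lma_Tho} replaced by Proposition~\ref{propII}, Proposition~\ref{prop_e} by Proposition~\ref{prop_eII}, and Lemma~\ref{td-punct} by the identity (\ref{td-punctII}). For (i): when $0<r<2(2-\ep_2)/(2+\ep_1)$, Remark~\ref{rmk_ep1} applied with $n=2$ gives $\overline{\mathfrak M}_{L_r^Y}({\bf c}_m)=\emptyset$ for every $m\in\Z$, so $\lambda(L_r^Y,{\bf c}_m)=\W \chi\big(\overline{\mathfrak M}_{L_r^Y}({\bf c}_m)\big)=0$ by (\ref{dt-Beh}).

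For (ii), I would first invoke Remark~\ref{rmk_ep1} once more: $\overline{\mathfrak M}_{L_r^Y}({\bf c}_m)$ is empty unless $m$ is even and nonnegative, so by (\ref{dt-Beh}) (valid since every sheaf in $\overline{\mathfrak M}_{L_r^Y}({\bf c}_{2m})$ is $L_r^Y$-stable by Lemma~\ref{E**}(i)) and by Proposition~\ref{propII} the generating series equals $\sum_{m\ge 0}\W \chi\big(\Quot^m_{\mathcal E_0/}\big)\,q^{2m}$. The whole problem then collapses to the single identity $\W \chi\big(\Quot^m_{\mathcal E_0/}\big)=(-1)^k\,\chi\big(\Quot^m_{\mathcal E_0/}\big)$. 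Granting this, Proposition~\ref{prop_eII} — which for $\dim Y=3$ reads $\sum_{m\in\Z}\chi\big(\overline{\mathfrak M}_{L_r^Y}({\bf c}_m)\big)q^m=(k+1)M(q^2)^{2\chi(Y)}$ in view of (\ref{n=3}) — yields $\sum_{m\in\Z}\lambda(L_r^Y,{\bf c}_m)q^m=(-1)^k\sum_{m\ge 0}\chi\big(\overline{\mathfrak M}_{L_r^Y}({\bf c}_{2m})\big)q^{2m}=(-1)^k(k+1)M(q^2)^{2\chi(Y)}$, which is (\ref{td-theoremII.0}).

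To prove the identity $\W \chi(\Quot^m_{\mathcal E_0/})=(-1)^k\chi(\Quot^m_{\mathcal E_0/})$ I would run the stratification argument of Theorem~4.11 in \cite{BF}, adapted to the \emph{relative} Quot-scheme $\Quot^m_{\mathcal E_0/}\to\overline{\mathfrak M}_{L_r^Y}({\bf c}_0)\cong\Pee^k$. One stratifies $\Quot^m_{\mathcal E_0/}$ by the configuration in $Y$ of the support of the quotient $E^{**}/E$; the stratum attached to a partition $\alpha\vdash m$ fibers over $\Pee^k$ times the configuration space of $\ell(\alpha)$ distinct points of $Y$, with fiber a fibre product over $\Pee^k$ of punctual pieces of the type ${\W F}_\bullet$, and by Lemma~\ref{affine} these local models are independent of the chosen point of $Y$. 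Both $\W \chi$ and $\chi$ of a stratum decompose over this fibration — the second using $\chi(-)=\chi\big((-)^{\mathbb T}\big)$ together with (\ref{Y0II}) — and on each relative punctual piece the comparison $\chi({\W F}_j,{\W \nu}_j)=(-1)^k\chi({\W F}_j)$ of (\ref{td-punctII}), combined with Corollary~4.3 of \cite{BF} for the $\Hilb\times\Hilb$ factors appearing in (\ref{Y0II}), produces a factor $(-1)^k$ that is \emph{uniform in $m$ and independent of $\alpha$}: the $\alpha$-dependent signs coming from the tangent-space exponent of Theorem~\ref{fixedset}(ii) cancel against those from Corollary~4.3 of \cite{BF}, exactly as they cancel in the proof of Lemma~\ref{td-punct}, and the surviving factor $(-1)^k=(-1)^{\dim\Pee^k}$ comes from the single base direction $\Pee^k=\overline{\mathfrak M}_{L_r^Y}({\bf c}_0)$. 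Summing over $\alpha$ gives the desired identity.

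The main obstacle is exactly this relative stratification. I would need to check that Theorem~4.11 of \cite{BF} goes through over the base $\Pee^k$; that the symmetric obstruction theory on $\Quot^m_{\mathcal E_0/}$ restricts $\mathbb T$-equivariantly to the local models $\Quot^m_{\rho_1^*(\mathcal O_{\Pee^k}\oplus\mathcal O_{\Pee^k}(-1))/\Pee^k\times Y_0/\Pee^k}$ of Lemma~\ref{affine} — here the fineness of the Gieseker moduli space and the base-change stability of the construction, already used to obtain (\ref{td-punctII}), are what make the $\mathbb T$-action compatible; and that the relative tangent-space count underlying (\ref{td-punctII}), carried out via Theorem~\ref{fixedset}(ii) and Lemma~\ref{difference}, genuinely isolates one copy of $(-1)^k$ rather than a power of it. All remaining steps are a line-by-line transcription of Subsect.~\ref{subsect_DonaldsonI}.
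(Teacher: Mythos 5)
Your proposal is correct and follows essentially the same route as the paper: part (i) from Remark~\ref{rmk_ep1}, and part (ii) by reducing via (\ref{dt-Beh}), Remark~\ref{rmk_ep1} and Proposition~\ref{propII} to the identity $\W\chi\big(\Quot^m_{\mathcal E_0/}\big)=(-1)^k\chi\big(\Quot^m_{\mathcal E_0/}\big)$, which the paper likewise proves by adapting the stratification of Theorem~4.11 in \cite{BF} (enabled by Lemma~\ref{affine}) together with (\ref{td-punctII}), and then concluding with Proposition~\ref{prop_eII}. Your extra care that the shared base $\Pee^k\cong\overline{\mathfrak M}_{L_r^Y}({\bf c}_0)$ contributes a single factor $(-1)^k$ (via fiber products over $\Pee^k$ rather than a sign per part of $\alpha$) is exactly the correct reading of the paper's tersely stated stratification formulas.
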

\noindent
{\it Proof.}
(i) In this case, $\overline{\mathfrak M}_{L_r^Y}({\bf c}_{m})
= \emptyset$ by Remark~\ref{rmk_ep1}. 
Hence $\lambda(L_r^Y, {\bf c}_{m}) = 0$.

(ii) By Remark~\ref{rmk_ep1}, Proposition~\ref{propII} and (\ref{dt-Beh}),
we have
\begin{eqnarray}      \label{td-theoremII.1}
  \sum_{m \in \Z} \lambda(L_r^Y, {\bf c}_m) q^m
= \sum_{m=0}^{+ \infty} \lambda(L_r^Y, {\bf c}_{2m}) q^{2m}
= \sum_{m=0}^{+ \infty} \W \chi \big ( \Quot^m_{\mathcal E_0/} \big ) \, q^{2m}.
\end{eqnarray}
By Lemma~\ref{affine}, we can adopt the proof of the Theorem~4.11 
in \cite{BF}. So
\begin{eqnarray*}    
   \W \chi \big ( \Quot^m_{\mathcal E_0/} \big ) 
&=&\sum_{\alpha \vdash n} |G_\alpha| \cdot \chi \big ( Y_0^{\ell(\alpha)} 
   \big ) \cdot \prod_i \chi({\W F}_{\alpha_i}, {\W \nu}_{\alpha_i}),  \\   
   \chi \big ( \Quot^m_{\mathcal E_0/} \big ) 
&=&\sum_{\alpha \vdash n} |G_\alpha| \cdot \chi \big ( Y_0^{\ell(\alpha)} 
   \big ) \cdot \prod_i \chi({\W F}_{\alpha_i}).    
\end{eqnarray*}
By (\ref{td-punctII}), $\W \chi \big ( \Quot^m_{\mathcal E_0/} \big )
= (-1)^k \cdot \chi \big ( \Quot^m_{\mathcal E_0/} \big )$.
By (\ref{td-theoremII.1}) and Proposition~\ref{prop_eII},
\begin{equation}
  \sum_{m \in \Z} \lambda(L_r^Y, {\bf c}_m) q^m
= (-1)^{k} \cdot (k+1) \cdot M(q^2)^{2 \, \chi(Y)}.
\tag*{$\qed$}
\end{equation}

\end{document}